\begin{document}
\theoremstyle{plain}
\newtheorem{theorem}{{\bf Theorem}}[section]
\newtheorem{corollary}[theorem]{Corollary}
\newtheorem{lemma}[theorem]{Lemma}
\newtheorem{proposition}[theorem]{Proposition}
\newtheorem{remark}[theorem]{Remark}

\theoremstyle{definition}
\newtheorem{defn}{Definition}
\newtheorem{definition}[theorem]{Definition}
\newtheorem{example}[theorem]{Example}
\newtheorem{conjecture}[theorem]{Conjecture}

\def\im{\mathop{\rm Im}\nolimits}
\def\dom{\mathop{\rm Dom}\nolimits}
\def\rank{\mathop{\rm rank}\nolimits}
\def\nullset{\mbox{\O}}
\def\ker{\mathop{\rm ker}\nolimits}
\def\implies{\; \Longrightarrow \;}

\def\GR{{\cal R}}
\def\GL{{\cal L}}
\def\GH{{\cal H}}
\def\GD{{\cal D}}
\def\GJ{{\cal J}}

\def\set#1{\{ #1\} }
\def\z{\set{0}}
\def\Sing{{\rm Sing}_n}
\def\nullset{\mbox{\O}}

\title{Regularity and Green's relations for the semigroup of partial
contractions of a finite chain }
\author{\bf B. Ali, A. Umar and M. M. Zubairu \footnote{Corresponding Author. ~~Email: $aumar@pi.ac.ae$} \\
\it\small Department of Mathematics, Nigerian Defence Academy, Kaduna\\
\it\small  \texttt{bali@nda.edu.ng}\\[3mm]
\it\small Department of Mathematics, The Petroleum Institute, Sas Nakhl,\\
\it\small Khalifa University of Science and Technology, P. O. Box 2533 Abu Dhabi, UAE\\
\it\small  \texttt{aumar@pi.ac.ae}\\[3mm]
\it\small  Department of Mathematics, Bayero  University Kano, P. M. Box 3011 Kano Nigeria\\
\it\small  \texttt{mmzubairu.mth@buk.edu.ng}\\
}
\date{\today}
\maketitle\

\begin{abstract}
 Let $[n]=\{1,2,\ldots,n\}$ be a finite chain and let  $\mathcal{P}_{n}$  be the semigroup of partial transformations on $[n]$.  Let $\mathcal{CP}_{n}=\{\alpha\in \mathcal{P}_{n}: ( for ~all~x,y\in \dom~\alpha)~|x\alpha-y\alpha|\leq|x-y|\}$, then $\mathcal{CP}_{n}$ is a  subsemigroup of $\mathcal{P}_{n}$. In this paper, we give a necessary and sufficient condition for an element in
 $\mathcal{P}_{n}$ to be regular and characterize all the Green's equivalences on the semigroup $\mathcal{CP}_{n}$.
 \end{abstract}
\emph{2010 Mathematics Subject Classification. 20M20.}

\section{Introduction and Preliminaries}
 Let $[n]=\{1,2, \ldots ,n\}$ be a finite chain, a map $\alpha$ which has domain and range both subsets of $[n]$ is said to be a \emph{(partial) transformation}.  The collection of all partial transformations of $[n]$ is known as the \emph{semigroup of partial transformations}, usually denoted by $\mathcal{P}_{n}$. A map $\alpha\in \mathcal{P}_{n}$ is said to be \emph{order preserving} (resp., \emph{order reversing}) if  (for all $x,y \in \dom~\alpha$) $x\leq y$ implies $x\alpha\leq y\alpha$ (resp. $x\alpha\geq y\alpha$); is \emph{order decreasing} if (for all $x\in \dom~\alpha$) $x\alpha\leq x$; is an \emph{isometry} (i. e., \emph{ distance preserving}) if (for all $x,y \in \dom~\alpha$) $|x\alpha-y\alpha|=|x-y|$;   a \emph{contraction} if (for all $x,y \in \dom~\alpha$) $|x\alpha-y\alpha|\leq |x-y|$. Let

 \begin{equation} \label{eq1}\mathcal{CP}_{n}=\{\alpha\in \mathcal{P}_{n}:( for ~all~x,y\in \dom~\alpha)~ |x\alpha-y\alpha|\leq |x-y|\}  \end{equation}
 and

 \begin{equation} \label{eq2}\mathcal{OCP}_{n}=\{\alpha\in \mathcal{CP}_{n}: ( for ~all~x,y\in \dom~\alpha)~x\leq y ~ implies ~ x\alpha\leq y\alpha\},\end{equation}
 be the subsemigroups of \emph{partial contractions} and of \emph{order preserving partial contractions} of $[n]$, respectively. A general study of these semigroups was first proposed in 2013  by Umar and AlKharousi \cite{af} (a research proposal supported by a grant from The Research Council of Oman - TRC). Umar and AlKharousi \cite{af} proposed among other things, notations for these semigroups and their subsemigroups as such we maintain the same notations in this paper.
  For standard and basic concepts in semigroup theory, we refer the reader to Howie  \cite{howi} and Higgins \cite{ph}.

  Regularity and Green's relations on the semigroup $\mathcal{P}_{n}$ and its various subsemigroups have been studied by many authors, see for example, \cite{p,Maz,howi,kd,gg,ggg,su,zou,pe,sp,sz, ua, py}. It is now the case that whenever one encounters a new class of semigroups, the first question usually raised is about its Green's equivalences. Recently,  Zhao and  Yang \cite{py} characterized regular elements and all the Green's equivalences on $\mathcal{CPO}_{n}$, where they refer to our``contractions" as``compressions". However, so far, nothing has been done on regularity and Green's relations for the new semigroup $\mathcal{CP}_{n}$. In this paper, in Section 2, we give necessary and sufficient conditions for an element in $\mathcal{CP}_{n}$ to be regular and in Section 3, we describe all the Green's equivalences. Most of the results concerning regularity and Green's relations of subsemigroups of $\mathcal{CP}_{n}$ can be deduced from the results obtained in this paper. We have demonstrated this assertion in Section 4 by deducing the results of Zhao and  Yang \cite{py}. For the remainder of this section we prove some preliminary results that will be needed later.

 Let $\alpha$ be in $\mathcal{CP}_{n}$ and let  $\dom\,\alpha$, $\im~\alpha$ and  $h~(\alpha)$ denote the domain of $\alpha$, image  of $\alpha$ and $|\im~\alpha|$, respectively. For $\alpha,\beta \in \mathcal{CP}_{n}$,  the composition of $\alpha$ and $\beta$ is defined as $x(\alpha \circ \beta) =((x)\alpha)\beta$ for any $x$ in $\dom~\alpha$.  Without ambiguity, we shall be using the notation $\alpha\beta$ to denote $\alpha \circ \beta$.

  Next, let $A$, $B$ be any nonempty subsets of  $[n]$. $A$ is said to \emph{precede} $B$  written as $A\prec B$  if  $a< b$ for arbitrary $a\in A$, $b\in B$  or $\min A< \min B$. Thus, if  $a< b$ for arbitrary $a\in A$, $b\in B$ then $A\prec B$ coincides with the natural partial ordering and  we can write $A< B$ instead of ($A\prec B$) otherwise we maintain the notation  ($A\prec B$).

   Further, given any transformation $\alpha$ in $\mathcal{P}_{n}$, domain of $\alpha$  is partitioned into $p-blocks$ by the relation $\ker~\alpha=\{(x,y)\in [n]\times [n]: x\alpha=y\alpha\}$, i. e., if

    \begin{equation}\label{1}
    \alpha=\left( \begin{array}{cccc}
                           A_{1} & A_{2} & \ldots & A_{p} \\
                           x_{1} & x_{2} & \ldots & x_{p}
                         \end{array}
   \right)\in \mathcal{P}_{n}  ~~  (1\leq p\leq n),
    \end{equation}

     \noindent then $A_{i}$ ($1\leq i\leq p$) are equivalence classes under the relation $\ker~\alpha$. Thus, $a_{i}\alpha=x_{i}$ for all $a_{i}\in A_{i}$ ($1\leq i\leq p$). The collection of all the equivalence classes of the relation $\ker~\alpha$, is the partition of the domain of  $\alpha$, and is denoted by $\textbf{Ker}~\alpha$, i. e., $\textbf{Ker}~\alpha=\{A_{1}, A_{2}, \ldots A_{p}\}$ and $\dom~\alpha=A_{1}\cup A_{2}\cup\ldots A_{p}$ where $(p\leq n)$. We now have the following lemma.

\begin{lemma}\label{min} Let $A$ and $B$ be any disjoint subsets of $[n]$ then there exist $a^{\prime}\in A$ and  $b^{\prime}\in B$ such that $\left|a^{\prime}-b^{\prime}\right|\leq \left|a-b\right|$ for all $a\in A$, $b\in B$.
\end{lemma}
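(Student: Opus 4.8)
The plan is to reduce the statement to the elementary fact that a nonempty finite set of integers attains its minimum, since $A,B\subseteq[n]$ are finite. First I would consider the set of cross-distances
\[
D=\{\,|a-b|:a\in A,\ b\in B\,\},
\]
which is a nonempty, finite set of positive integers: nonempty because (as the conclusion presupposes) $A$ and $B$ are nonempty, finite because $[n]$ is, and strictly positive because disjointness forces $a\neq b$, hence $|a-b|\geq 1$, for every admissible pair. By the well-ordering principle $D$ has a least element $m$, and by the very definition of $D$ this minimum is realized, say $m=|a'-b'|$ with $a'\in A$ and $b'\in B$. This $(a',b')$ is the required witness: for arbitrary $a\in A$ and $b\in B$ one has $|a-b|\in D$, whence $|a'-b'|=m\leq|a-b|$.

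Should a later section need an explicit witness rather than bare existence, I would instead argue constructively from the order on the chain. Listing $A\cup B$ in increasing order as $c_{1}<c_{2}<\cdots<c_{k}$, I claim a minimizing cross-pair is always adjacent in this list. Indeed, if $(a',b')$ minimizes $|a-b|$ over $A\times B$ with, say, $a'<b'$, and some $c_{j}$ satisfied $a'<c_{j}<b'$, then $c_{j}$ lies in $A$ or in $B$; pairing $c_{j}$ with whichever of $a',b'$ belongs to the other set produces a cross-pair of strictly smaller distance, contradicting minimality. Hence no element of $A\cup B$ lies strictly between $a'$ and $b'$, which both locates the witness and re-proves existence.

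I expect no real obstacle here: the existence claim follows immediately from finiteness, so the only genuine care is bookkeeping --- recording that $A,B$ are taken nonempty so that the quantifier ``there exist $a'\in A$, $b'\in B$'' is meaningful, and, in the constructive version, checking cleanly that the interposed element $c_{j}$ yields a smaller cross-pair in each of the two cases $c_{j}\in A$ and $c_{j}\in B$. I would present the finiteness argument as the proof and, if it is needed downstream, record the adjacency characterization as a remark.
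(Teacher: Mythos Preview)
Your proposal is correct and takes essentially the same approach as the paper: define the set of cross-distances and invoke the well-ordering property of $\mathbb{N}$ to extract a minimizing pair. The paper's proof is exactly this (in fewer words), so your first paragraph reproduces it faithfully; your additional adjacency argument is not in the paper but is a harmless extra.
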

\begin{proof}   Define a set as $(A-B)^{\prime}=\{\left|a-b\right|:a\in A, b\in B\}$, clearly $(A-B)^{\prime}$ is a subset of $\mathbb{N}$. The result now follows by the well ordering property of $\mathbb{N}$.
\end{proof}

  Now, let $\alpha$ be as defined in \eqref{1} and  $\textbf{Ker}~\alpha=\left(A_{i}\right)_{i\in[p]}=\{A_{1}\prec A_{2}\prec \ldots \prec A_{p}\}$ be the partition of $\dom~\alpha$  ordered with the relation $\prec$. A subset $T_{\alpha}$ of $[n]$ is said to be a \emph{transversal} of the partition $\textbf{Ker}~\alpha$ if $|T_{\alpha}|=p$, and $|A_{i}\cap T_{\alpha}|=1$ ($1\leq i\leq p$). A transversal  $T_{\alpha}$  is said to be \emph{relatively convex} if for all $x,y\in T_{\alpha}$ with $x\leq y$ and if $x\leq z\leq y$ ($z\in \dom~\alpha$), then $z\in T_{\alpha}$. Notice that every convex transversal is necessarily relatively convex but not \emph{vice-versa}.

A transversal $T_{\alpha}$ is said to be \emph{admissible} if  for any $t_i, t_j\in T_{\alpha}=\{t_{i}: ~t_{i}\in A_{i}, ~1\leq i\leq p\}$, $\left|t_i-t_j\right|\leq\left|a_{i}-a_{j}\right|$ for all $a_{i}\in A_{i}$, $a_{j}\in A_{j}$ ($i,j\in\{1,2,\ldots,p\}$).
In other words, a transversal $T_{\alpha}$ is admissible if and only if the map $A_{i}\mapsto t_{i}$  ($t_{i}\in T_{\alpha},\, i\in\{1,2,\ldots,p\}$) is a contraction. Notice that every convex transversal
is admissible but not \emph{vice-versa}.

    For the purpose of illustration, consider the following transformations: \\$\alpha_{1}= \left(\begin{array}{cccc}
                                       \{1, 2, 10,  23\} & \{4,  12\} & \{6, 14\} & \{7, 16, 17 \} \\
                                       8 & 6 & 4 & 3
                                     \end{array}
 \right)$,\\ $\alpha_{2}=\left(\begin{array}{ccc}
                                       \{1,5,30\} & \{2,12,10\} & \{4,16\}  \\
                                       8 & 7 & 9
                                     \end{array}
 \right)$, $\alpha_{3}=\left(\begin{array}{ccc}
                                       \{1,7,21\} & \{2,8,20\} & \{3,9,19\}  \\
                                       3 & 4 & 5
                                     \end{array}
 \right)$.

The  partition $\textbf{Ker}~\alpha_{1}$ has an admissible transversal $\{2,4,6,7\}$. Next, for the transformation $\alpha_{2}$, none of the transversals of $\textbf{Ker}~\alpha_{2}$ is admissible.

Now consider $\alpha_{3}$. The transversals $\{1,2,3\}$, $\{7,8,9\}$ and $\{19,20,21\}$ are all admissible and convex.

 \begin{remark}\label{minn} \emph{We observe the following:}

\begin{description}
  \item[(i)] \emph{Every  convex transversal  is an admissible  transversal,  but the converse is not true};
  \item[(ii)] \emph{ Every partition $\textbf{Ker}~\alpha$, of $\dom~\alpha$ in $\mathcal{CP}_{n}$ of height 2 has an admissible transversal. This follows from Lemma\eqref{min};}
  \item[(iii)] \emph{Every admissible transversal is relatively convex}.
\end{description}

 \end{remark}

 Next, we have the following lemma:

\begin{lemma}\label{p1} For $n\geq 4$, let $\alpha\in \mathcal{CP}_{n}$  be such that there exists $k\in\{2,\ldots,p-1\}$ ($3\leq p\leq n$) and $|A_{k}|\geq 2$.  If $A_{i}<A_{j}$ ($i<j$) for all $i,j \in  \{1,2,\ldots,p\}$  then the partition $\textbf{Ker}~\alpha=\{ A_{1}, A_{2},\ldots, A_{p}\}$ of $\dom~\alpha$ has no relatively convex transversal.
 \end{lemma}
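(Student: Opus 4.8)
The plan is to show that \emph{every} transversal of $\textbf{Ker}~\alpha$ fails to be relatively convex, by exhibiting a domain element that is sandwiched between two chosen representatives yet is itself not chosen. The block $A_k$ of size at least $2$ sitting strictly between $A_{k-1}$ and $A_{k+1}$ is exactly what supplies such a ``stray'' element, and the strict separation of the blocks is what forces the sandwiching.

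Concretely, let $T_{\alpha}=\{t_{1},t_{2},\ldots,t_{p}\}$ be an arbitrary transversal, where $t_{i}\in A_{i}$ for each $i$. Since the hypothesis gives $A_{1}<A_{2}<\cdots<A_{p}$ in the natural order (every element of $A_{i}$ is strictly below every element of $A_{j}$ whenever $i<j$), the representatives inherit this order, so $t_{1}<t_{2}<\cdots<t_{p}$. Because $2\leq k\leq p-1$, both flanking representatives $t_{k-1}\in A_{k-1}$ and $t_{k+1}\in A_{k+1}$ exist and belong to $T_{\alpha}$.

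Now use $|A_{k}|\geq 2$ to choose $z\in A_{k}$ with $z\neq t_{k}$; note $z\in\dom~\alpha$. From $A_{k-1}<A_{k}<A_{k+1}$ we immediately obtain $t_{k-1}<z<t_{k+1}$, regardless of whether $z$ lies above or below $t_{k}$ inside $A_{k}$. On the other hand, $T_{\alpha}$ meets $A_{k}$ in the single point $t_{k}$, so $z\notin T_{\alpha}$. Taking $x=t_{k-1}$ and $y=t_{k+1}$ in the definition of relative convexity, we have produced $x,y\in T_{\alpha}$ and $z\in\dom~\alpha$ with $x\leq z\leq y$ but $z\notin T_{\alpha}$; hence $T_{\alpha}$ is not relatively convex. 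As $T_{\alpha}$ was arbitrary, no transversal of $\textbf{Ker}~\alpha$ is relatively convex.

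I do not anticipate a serious obstacle: the argument is purely combinatorial and does not even invoke the contraction inequality beyond the ambient membership $\alpha\in\mathcal{CP}_{n}$. The only points requiring care are that the flanking representatives $t_{k-1},t_{k+1}$ genuinely exist --- which is precisely what the condition $k\in\{2,\ldots,p-1\}$ guarantees --- and that the strict separation $A_{k-1}<A_{k}<A_{k+1}$ makes the bracketing $t_{k-1}<z<t_{k+1}$ automatic for \emph{any} choice of the stray point $z$. (The requirement $n\geq 4$ is consistent with this, since $p\geq 3$ together with $|A_{k}|\geq 2$ already forces $\dom~\alpha$ to contain at least four points.)
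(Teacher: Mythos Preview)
Your proof is correct and follows essentially the same approach as the paper: pick a second element $z\in A_{k}$ distinct from the chosen representative and sandwich it between transversal points using the strict block ordering. The only difference is cosmetic---the paper splits into the cases $z<t_{k}$ and $z>t_{k}$ (sandwiching with $t_{k-1},t_{k}$ or $t_{k},t_{k+1}$ respectively), whereas you use the outer pair $t_{k-1},t_{k+1}$ directly and thereby avoid the case distinction.
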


 \begin{proof}
 Let $\textbf{Ker}~\alpha=\{ A_{1}, A_{2},\ldots,A_{k-1}, A_{k},A_{k+1},\ldots, A_{p}\}$ be  partition of $\dom~\alpha$ such that $|A_{k}|\geq 2$ where $2\leq k\leq p-1$. Suppose $A_{i}<A_{j}$ ($i<j$) for all $i,j \in  \{1,2,\ldots,p\}$.

  Suppose by way of contradiction that $\textbf{Ker}~\alpha$ has a relatively convex transversal\\ $T_{\alpha}=\{t_{1}, t_{2},\ldots,t_{k-1}, t_{k},t_{k+1}, \ldots, t_{p} \}$  ($t_{i}\in A_{i}$ for all $1\leq i\leq p$). Now since $|A_{k}|\geq 2$, it means that there exists $a_{k}\in A_{k}$ with $a_{k}\neq t_{k}$ and $a_{k}\not\in T_{\alpha}$. Suppose $a_{k}< t_{k}$. Notice that every element in $A_{k-1}$ is less than every element in $A_{k}$, in particular $t_{k-1}<a_{k}< t_{k}$. This contradicts the fact that $T_{\alpha}$ is relatively convex. On the other hand, suppose $t_{k}<a_{k}$. Notice also that $A_{k}<A_{k+1}$, thus $t_{k}<a_{k} <t_{k+1}$. This also contradicts the fact that $T_{\alpha}$ is relatively convex and hence the result follows.

 \end{proof}
 \begin{corollary}\label{p2} For $n\geq 4$, let $\alpha\in \mathcal{ORCP}_{n}$ be such that (for $p\geq 3$) there exists $k\in\{2,\ldots,p-1\}$ and $|A_{k}|\geq 2$. Then the partition $\textbf{Ker}~\alpha=\{ A_{1}, A_{2},\ldots, A_{p}\}$ of $\dom~\alpha$ has no relatively convex transversal.
\end{corollary}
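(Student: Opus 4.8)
The plan is to reduce the corollary directly to Lemma \ref{p1} by verifying that its hypothesis $A_i<A_j$ (for $i<j$) is automatic once $\alpha\in\mathcal{ORCP}_{n}$. Thus the whole task is to upgrade the $\prec$-ordering of the kernel classes, which holds by convention, to the genuine separation $A_1<A_2<\cdots<A_p$; after that the conclusion is simply Lemma \ref{p1} applied verbatim.

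First I would show that every kernel class of such an $\alpha$ is convex in $\dom\,\alpha$. Take $x<y<z$ with $x,z\in A_i$ and $y\in\dom\,\alpha$. Since $\alpha$ is monotone on its domain, we have $x\alpha\le y\alpha\le z\alpha$ in the order-preserving case and $x\alpha\ge y\alpha\ge z\alpha$ in the order-reversing case; as $x\alpha=z\alpha$, both chains collapse to $y\alpha=x\alpha$, forcing $y\in A_i$. Hence each $A_i$ is convex.

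Next I would use convexity together with disjointness to rule out interleaving. If $A_i\prec A_j$ but $A_i<A_j$ fails, then some $b\in A_j$ satisfies $b\le a$ for an $a\in A_i$, while $\min A_i<\min A_j\le b$; taking the witnesses $\min A_i$ and $a$ in $A_i$ puts $b$ either strictly between two elements of $A_i$ (contradicting convexity) or inside $A_i\cap A_j$ (contradicting disjointness). Therefore $A_i\prec A_j$ implies $A_i<A_j$, i.e. $A_1<A_2<\cdots<A_p$.

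With the separation $A_i<A_j$ ($i<j$) secured and the standing assumption that some middle class $A_k$ ($2\le k\le p-1$, $p\ge3$) has $|A_k|\ge2$, the map $\alpha$ meets precisely the premises of Lemma \ref{p1}, and invoking that lemma yields that $\textbf{Ker}\,\alpha$ has no relatively convex transversal. I expect no serious obstacle: the only delicate point is to run the convexity argument uniformly over both the order-preserving and order-reversing alternatives comprising $\mathcal{ORCP}_{n}$, and the no-interleaving deduction must be phrased so as to cover the boundary case $b=a$ as well.
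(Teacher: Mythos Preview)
Your proposal is correct and aligns with the paper's intent: the corollary is stated without proof immediately after Lemma~\ref{p1}, so the paper implicitly treats it as a direct consequence once one observes that kernel classes of an order-preserving or order-reversing map are convex in $\dom\,\alpha$ and hence satisfy $A_i<A_j$ for $i<j$. You have simply spelled out this verification, which is exactly the intended reduction.
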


 \begin{lemma}\label{p3}
 Let $\alpha\in \mathcal{CP}_{n}$ be such that $A_{i}<A_{j}$ for all $i<j$ in $\{1,2,\ldots,p\}$ (for
 $ p\geq 3$).  If  $|A_{i}|=1$  for all $2\leq i\leq p-1$ then the partition $\textbf{Ker}~\alpha$ of $\dom~\alpha$ has an \emph{admissible}  transversal $T_{\alpha}$.
  \end{lemma}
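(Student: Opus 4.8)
The plan is to construct an explicit admissible transversal, exploiting the fact that all the interior blocks are singletons so that the only genuine freedom lies in choosing one element from $A_1$ and one from $A_p$. First I would record that for each $i$ with $2\leq i\leq p-1$ the block $A_i$ is a singleton, say $A_i=\{t_i\}$, so the transversal element $t_i$ is already forced. I would then set $t_1=\max A_1$ and $t_p=\min A_p$, i.e. take from the first block its largest element and from the last block its smallest element (the two elements pointing ``inward'' toward the interior blocks), and put $T_\alpha=\{t_1,t_2,\ldots,t_p\}$.

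The key observation, which makes the verification essentially immediate, is that with this choice one has $t_i=\max A_i$ for every $i\in\{1,\ldots,p-1\}$ (by choice when $i=1$, and because $A_i$ is a singleton when $2\leq i\leq p-1$), and dually $t_j=\min A_j$ for every $j\in\{2,\ldots,p\}$. Hence for an arbitrary pair $i<j$ the smaller index satisfies $t_i=\max A_i$ while the larger index satisfies $t_j=\min A_j$. Since the hypothesis $A_i<A_j$ places every element of $A_i$ strictly below every element of $A_j$, I have $|t_i-t_j|=t_j-t_i$ and $|a_i-a_j|=a_j-a_i$ for all $a_i\in A_i$, $a_j\in A_j$.

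I would then finish with the chain $t_j-t_i=\min A_j-\max A_i\leq a_j-a_i$, valid because $a_i\leq\max A_i$ and $a_j\geq\min A_j$; this yields $|t_i-t_j|\leq|a_i-a_j|$ for all such $a_i,a_j$, which is precisely the admissibility condition. As the pair $i<j$ was arbitrary, $T_\alpha$ is an admissible transversal, as required.

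I do not expect a genuine obstacle here: the interior of the transversal is forced and the two endpoint choices are dictated by the need to minimise the gaps to the interior blocks. The only point requiring care is the bookkeeping that the chosen $t_i$ simultaneously equals $\max A_i$ whenever it plays the role of the smaller index and equals $\min A_j$ whenever it plays the role of the larger index; this is exactly where the hypothesis $|A_i|=1$ for $2\leq i\leq p-1$ is used, and it is what guarantees that the bound above is attained with equality for the tightest choice of $a_i,a_j$.
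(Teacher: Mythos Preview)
Your proposal is correct and takes essentially the same approach as the paper: both construct the transversal $T_\alpha=\{\max A_1,\,a_2,\ldots,a_{p-1},\,\min A_p\}$ and then argue that the map $A_i\mapsto t_i$ is a contraction. The paper simply asserts this last step as ``clearly,'' whereas you spell out the inequality $t_j-t_i=\min A_j-\max A_i\leq a_j-a_i$ explicitly, which is a welcome bit of extra detail.
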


 \begin{proof} Take $T_{\alpha}=\{a_{i}: 2\leq i\leq p-1\}\cup \{\max A_{1},~ \min A_{p}\}$. Clearly,  $\theta=\left( \begin{array}{ccccc}
                           A_{1} & a_{2} & \ldots & a_{p-1} & A_{p} \\
                           \max A_{1} & a_{2} & \ldots & a_{p-1} & \min A_{p}
                         \end{array}
   \right)$ is a contraction. Hence $T_{\alpha}$ is admissible.

 \end{proof}

  A map $\alpha\in \mathcal{P}_{n}$ is said to be an \emph{isometry} if and only if $\left|x\alpha-y\alpha\right|=\left|x-y\right|$ for all $x,y\in \dom~\alpha$. If we consider $\alpha$ as defined in \eqref{1}, then $\alpha$ is an \emph{isometry} if and only if $\left|x_{i}-x_{j}\right|=\left|a_{i}-a_{j}\right|$ for all $a_{i}\in A_{i}$ and $a_{j}\in A_{j}$ ($i,j\in\{1,2,\ldots,p\}$). Notice that this forces the blocks $A_{i}$ ($i=1,\ldots,p$) to be singletons, because $\alpha$ is one$-$one. In other words $\alpha$ is an \emph{isometry} if and only if $\dom~{\alpha}=\{a_{i}: 1\leq i\leq p\}=\{x_{i} + e: 1\leq i\leq p\} = (\dom~{\alpha})\alpha + e$ (called a \emph{translation}) or $\dom~{\alpha}=\{a_{i}: 1\leq i\leq p\}=\{x_{p-i+1} + e: 1\leq i\leq p\} = (\dom~{\alpha})\alpha + e$  (called a \emph{reflection}) for some integer $e$.

 Now let $\alpha \in \mathcal{P}_{n}$ be as defined in \eqref{1} ($1\leq p\leq n$). Then we have by the definition of contraction the following lemma:

\begin{lemma}\label{con} An element $\alpha$ in $\mathcal{P}_{n}$ is a contraction if and only if $\left|x_{i}-x_{j}\right|\leq \left|a-b\right|$ for all $a\in A_{i}$ and $b\in A_{j}$  ($i,j\in\{1,2,\ldots,p\}$).

\end{lemma}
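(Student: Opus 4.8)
The plan is to unwind the definition of a contraction using the fact that the kernel classes $A_{1},\ldots,A_{p}$ partition $\dom~\alpha$ and that $\alpha$ sends every element of $A_{i}$ to the single value $x_{i}$ (so that $a_i\alpha = x_i$ for all $a_i \in A_i$, as recorded after \eqref{1}). Both implications then reduce to a change of viewpoint between the pointwise inequality $|x\alpha - y\alpha| \le |x-y|$ and the block-wise inequality on the images $x_{i}$.

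For the forward direction I would assume $\alpha$ is a contraction and fix arbitrary indices $i,j \in \{1,\ldots,p\}$ together with elements $a \in A_{i}$ and $b \in A_{j}$. Since $a\alpha = x_{i}$ and $b\alpha = x_{j}$, and since $a,b \in \dom~\alpha$, applying the contraction inequality to the pair $a,b$ gives $|x_{i}-x_{j}| = |a\alpha - b\alpha| \le |a-b|$, which is precisely the asserted condition.

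For the converse I would assume the block-wise inequality holds and take any two points $x,y \in \dom~\alpha$. Because the classes partition the domain, $x$ lies in some $A_{i}$ and $y$ in some $A_{j}$, whence $x\alpha = x_{i}$ and $y\alpha = x_{j}$. Choosing $a = x$ and $b = y$ in the hypothesis then yields $|x\alpha - y\alpha| = |x_{i}-x_{j}| \le |x-y|$, so $\alpha$ is a contraction.

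The argument is essentially just a reindexing from individual points to kernel classes, so there is no substantive obstacle. The only points to keep straight are that the hypothesis is to be read as a universal statement over all $a \in A_{i}$ and $b \in A_{j}$ (and over all $i,j$), and that the diagonal case $i=j$ is automatic, since there the left-hand side is $|x_{i}-x_{i}| = 0 \le |a-b|$.
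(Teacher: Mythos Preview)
Your proof is correct and is exactly what the paper has in mind: the paper does not give a written proof of this lemma, stating only that it follows ``by the definition of contraction,'' and your argument is precisely the straightforward unwinding of that definition via the kernel-class partition.
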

The next lemma gives a characterization of contractions with an admissible transversal.

\begin{lemma}\label{connn} Let $\alpha\in \mathcal{P}_{n}$ be such that $\textbf{Ker}~\alpha$ has an admissible transversal, $T_{\alpha}$. Then $\alpha$ is a contraction if and only if $\left|x_{i}-x_{j}\right|\leq\left|t_{i}-t_{j}\right|$ for all $t_{i}, t_{j}\in T_{\alpha}$ for all $i,j\in\{1,2,\ldots,p\}$.
 \end{lemma}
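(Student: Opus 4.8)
The plan is to reduce both implications of the equivalence to the earlier characterisation of contractions in Lemma~\ref{con}, using the defining property of an admissible transversal as the bridge. Recall that Lemma~\ref{con} says $\alpha$ is a contraction precisely when $|x_i - x_j| \leq |a-b|$ for every $a \in A_i$ and $b \in A_j$ and all $i,j$; and that $T_{\alpha}$ being admissible means $|t_i - t_j| \leq |a_i - a_j|$ for all $a_i \in A_i$, $a_j \in A_j$. So the transversal representatives realise the smallest possible gap between any two blocks, and this is exactly the inequality I will chain against.

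For the forward direction, I would assume $\alpha$ is a contraction and simply specialise. Since $t_i \in A_i$ and $t_j \in A_j$, Lemma~\ref{con} applied with $a = t_i$ and $b = t_j$ yields $|x_i - x_j| \leq |t_i - t_j|$ for all $i,j$, which is the asserted condition. This step is immediate and uses nothing beyond the inclusion $T_{\alpha}\cap A_i = \{t_i\}$.

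For the converse, I would assume $|x_i - x_j| \leq |t_i - t_j|$ for all $i,j$ and aim to verify the hypothesis of Lemma~\ref{con}. Fix arbitrary $a \in A_i$ and $b \in A_j$. Admissibility of $T_{\alpha}$ gives $|t_i - t_j| \leq |a - b|$, and combining this with the assumed inequality produces $|x_i - x_j| \leq |t_i - t_j| \leq |a - b|$. Since $i,j,a,b$ were arbitrary, Lemma~\ref{con} lets me conclude that $\alpha$ is a contraction.

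The argument is short because the admissibility condition is precisely tuned to interpolate between transversal representatives and arbitrary block elements; the only point needing care --- and the one I would flag as the main obstacle --- is keeping the direction of the two inequalities straight, so that admissibility is invoked as $|t_i - t_j| \leq |a-b|$ (transversal gaps are smallest) rather than the reverse. Once that orientation is fixed, the two chained inequalities close the proof in both directions.
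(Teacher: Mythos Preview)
Your proposal is correct and follows essentially the same argument as the paper's own proof: both directions are reduced to Lemma~\ref{con}, the forward implication by specialising to $a=t_i$, $b=t_j$, and the converse by chaining the assumed inequality with the admissibility inequality $|t_i-t_j|\leq|a-b|$.
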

 \begin{proof} Suppose $\alpha\in \mathcal{P}_{n}$ is such that $\textbf{Ker}~\alpha$ has an admissible transversal, $T_{\alpha}$. Further, suppose that $\alpha$ is a contraction. Then by Lemma\eqref{con}, $\left|x_{i}-x_{j}\right|\leq \left|a_{i}-a_{j}\right|$ for all $a_{i}\in A_{i}$ and $a_{j}\in A_{j}$ (for all  $i,j\in\{1,2,\ldots,p\}$). Thus in particular, $\left|x_{i}-x_{j}\right|\leq\left|t_{i}-t_{j}\right|$.

 Conversely, suppose $\left|x_{i}-x_{j}\right|\leq\left|t_{i}-t_{j}\right|$ for all $t_{i}, t_{j}\in T_{\alpha}$ for some $T_{\alpha}$. Notice that  $\left|t_{i}-t_{j}\right|\leq\left|a_{i}-a_{j}\right|$ for all $a_{i}\in A_{i}$ and $a_{j}\in A_{j}$ ( for all $i,j\in\{1,2,\ldots,p\}$). Thus $\left|x_{i}-x_{j}\right|\leq \left|t_{i}-t_{j}\right| \leq\left|a_{i}-a_{j}\right|$. The result follows from Lemma\eqref{con}.

 \end{proof}
 \begin{lemma}\label{tofa} Let $\alpha\in \mathcal{CP}_{n}$ and let $A$ be a convex subset of $\dom~\alpha$. Then $A\alpha$ is convex.
 \end{lemma}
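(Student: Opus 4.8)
The plan is to use that a convex subset of the chain $[n]$ is exactly a block of consecutive integers, together with the observation that a contraction moves the images of consecutive points by at most one step.

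First I would record that, since $A$ is convex in $[n]$, it has the form $A=\{a,a+1,\ldots,b\}$ for some $a\leq b$, with all of $a,a+1,\ldots,b$ lying in $\dom~\alpha$. Then, for each consecutive pair $i,i+1\in A$, the contraction hypothesis $\alpha\in\mathcal{CP}_{n}$ gives $|i\alpha-(i+1)\alpha|\leq|i-(i+1)|=1$. Thus the sequence of images $a\alpha,(a+1)\alpha,\ldots,b\alpha$ has all successive differences of absolute value at most $1$.

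The key step is a discrete intermediate-value statement: the set of values attained by a finite integer sequence whose consecutive terms differ by at most $1$ is convex. I would prove this directly. Write $m=\min(A\alpha)$ and $M=\max(A\alpha)$, and fix any integer $w$ with $m\leq w\leq M$; choose points $c,d\in A$ with $c\alpha=m$ and $d\alpha=M$. Walking along the consecutive points between $c$ and $d$, the images go from $m$ to $M$ in steps of size at most $1$, so taking the last point $k$ on this run with $k\alpha\leq w$, maximality together with the unit-step bound forces $k\alpha=w$. Hence $w\in A\alpha$, and since $w$ was arbitrary, $A\alpha=\{m,m+1,\ldots,M\}$ is convex.

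I expect the only genuine content to be this discrete intermediate-value step; the rest is unwinding the definitions of convexity and of contraction. The small thing to be careful about is to treat the two cases $c\leq d$ and $c>d$ symmetrically, since the image sequence need not be monotone, and to note that the argument never uses order preservation, only the contraction inequality guaranteed by $\alpha\in\mathcal{CP}_{n}$.
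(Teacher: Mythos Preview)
Your proof is correct. Both your argument and the paper's rest on the same key observation: since $A=\{a,a+1,\ldots,b\}\subseteq\dom\,\alpha$, consecutive elements satisfy $|(i+1)\alpha-i\alpha|\leq 1$. You package this as a direct discrete intermediate-value theorem, walking from a preimage of $\min A\alpha$ to a preimage of $\max A\alpha$ and showing every value in between is hit. The paper instead argues by contradiction: assuming a gap at some $z\notin A\alpha$, it partitions $A$ into the preimages of $(-\infty,z-1]$ and $[z+1,\infty)$ and locates two adjacent points of $A$ lying in different parts, forcing $|(a\pm 1)\alpha-a\alpha|\geq 2$. Your direct route is shorter and avoids the case analysis on whether the lower preimage set is convex; the paper's contradiction argument makes the role of the gap more explicit but is otherwise equivalent in content.
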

 \begin{proof}
 Let $\alpha\in \mathcal{CP}_{n}$ such that $A\subseteq \dom~\alpha$ is convex. Suppose by way of contradiction that $A\alpha$ is not convex. That is to say, there exist $x,~z\in A\alpha$ with $x<z<y$ for some $z\in [n]\setminus A\alpha$. Let $(z-1]$ and $[z+1)$ be the lower and upper saturations of $z-1$ and $z+1$, respectively. Note that, $x\in (z-1]$ and $y\in [z+1)$. Also note that, $(z-1]\alpha^{-1}\neq \dom~\alpha\neq [z+1)\alpha^{-1}$, but $(z-1]\alpha^{-1}\cup [z+1)\alpha^{-1}=\dom~\alpha$. If $(z-1]\alpha^{-1}$ is convex then, since $(z-1]\alpha^{-1}\neq \dom~\alpha$, there exist either (i) an element $a$ in $(z-1]\alpha^{-1}$ and $a+1$ in $(z+1]\alpha^{-1}$; or (ii) $a$ in $(z-1]\alpha^{-1}$ and $a-1$ in $(z-1]\alpha^{-1}$.
Case i: Clearly $a\alpha\leq z-1$ and $(a+1)\alpha\geq z+1$ so that, $$ 2\leq (a+1)\alpha-a\alpha=\left|(a+1)\alpha-a\alpha\right|\leq \left|(a+1)-a\right|=1,$$
which is a contradiction.
Case ii: it is clear that $a\alpha\leq z-1$ and $(a-1)\alpha\geq y+1$ so that $$2\leq (a-1)\alpha-a\alpha=\left|(a-1)\alpha-a\alpha\right|\leq\left|(a-1)-a\right|=1,$$ which is another contradiction.

Now if $(z-1]\alpha^{-1}$ is not convex then there exists  $a\in (z-1]\alpha^{-1}$ and either $a+1\in (z+1]\alpha^{-1}$ or $a-1\in (z+1]\alpha^{-1}$. In the former, we see that $a\alpha\leq z-1$ and $(a+1)\alpha\geq z+1$. Therefore, $$ 2\leq (a+1)\alpha-a\alpha=\left|(a+1)\alpha-a\alpha\right|\leq \left|(a+1)-a\right|=1,$$
which is a contradiction. In the letter, we see that $a\alpha\leq z-1$ and $(a-1)\alpha\geq z+1$. Thus, $$2\leq (a-1)\alpha-a\alpha=\left|(a-1)\alpha-a\alpha\right|\leq\left|(a-1)-a\right|=1,$$ which is another contradiction. Hence the result follows.

 \end{proof}
\begin{corollary}[\cite{adu}, lemma1.2]\label{tc}  Let $\alpha\in \mathcal{CT}_{n}$ be such that $|\im~\alpha|=p$. Then $\im~\alpha$ is convex.
\end{corollary}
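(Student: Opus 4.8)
The plan is to obtain this as an immediate consequence of Lemma~\ref{tofa}; the only real work is to identify the correct convex set to feed into that lemma. The crucial observation is that an element $\alpha\in\mathcal{CT}_{n}$ is a \emph{full} contraction, so its domain is the entire chain: $\dom~\alpha=[n]$. Since every full contraction is in particular a partial contraction, we have $\mathcal{CT}_{n}\subseteq\mathcal{CP}_{n}$, and hence Lemma~\ref{tofa} is applicable to $\alpha$.

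First I would note that $[n]=\{1,2,\ldots,n\}$ is itself a convex subset of the chain $[n]$, being the whole interval. Taking $A=[n]=\dom~\alpha$, which is convex, Lemma~\ref{tofa} immediately yields that $A\alpha$ is convex. But $A\alpha=[n]\alpha=\im~\alpha$, so $\im~\alpha$ is convex, which is exactly the assertion. The hypothesis $|\im~\alpha|=p$ plays no role in the deduction; it merely records the height of $\alpha$ for use in subsequent results.

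Because the argument is a one-line specialization of Lemma~\ref{tofa}, there is no substantive obstacle to overcome. The only points to verify are the (trivial) convexity of the full chain $[n]$ and the containment $\mathcal{CT}_{n}\subseteq\mathcal{CP}_{n}$, both of which are immediate. Thus the entire content of the corollary is already carried by Lemma~\ref{tofa}, and the proof amounts to applying that lemma to the convex set $\dom~\alpha=[n]$.
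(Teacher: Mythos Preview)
Your proposal is correct and matches the paper's own proof essentially verbatim: observe that $\dom~\alpha=[n]$ is convex and apply Lemma~\ref{tofa} to conclude that $\im~\alpha=[n]\alpha$ is convex. Your additional remarks (that $\mathcal{CT}_{n}\subseteq\mathcal{CP}_{n}$ and that the hypothesis $|\im~\alpha|=p$ is not used) are accurate and simply make explicit what the paper leaves implicit.
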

\begin{proof} Let $\alpha\in \mathcal{CT}_{n}$.  Notice that $\dom~\alpha=[n]$ is convex. Thus, by Lemma\eqref{tofa} $[n]\alpha=\im~\alpha$ is convex.
\end{proof}

Next, we have
 \begin{lemma}\label{8} Let $\alpha\in \mathcal{P}_{n}$ be as defined in \eqref{1}  $(3\leq p\leq n)$, such that $\textbf{Ker}~\alpha$ has a convex transversal. Then  $\alpha$  is a contraction if and only if     $T_{\alpha}=(T_{\alpha})\alpha + e$, for some $e\in \mathbb{Z}$.
\end{lemma}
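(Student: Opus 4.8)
The plan is to prove the equivalence by exploiting the identity $(T_{\alpha})\alpha=\im~\alpha$ (since $t_{i}\alpha=x_{i}$ for the representative $t_{i}\in A_{i}\cap T_{\alpha}$), together with the two tools already available: Lemma~\ref{tofa}, which turns convexity of a subset of the domain into convexity of its image, and Lemma~\ref{connn}, which reduces the contraction test to the transversal once that transversal is admissible. Recall from Remark~\ref{minn}(i) that a convex transversal is automatically admissible, so Lemma~\ref{connn} is applicable throughout. Accordingly, as in the paper's isometry notation, I read the equality $T_{\alpha}=(T_{\alpha})\alpha+e$ as the assertion that the restriction $\alpha|_{T_{\alpha}}$ is an isometry, i.e. $t_{i}=x_{i}+e$ for a common integer $e$.

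For the forward implication I would assume $\alpha$ is a contraction. Since $T_{\alpha}\subseteq\dom~\alpha$ is convex, Lemma~\ref{tofa} gives that $(T_{\alpha})\alpha=\im~\alpha$ is convex; as $|\im~\alpha|=p=|T_{\alpha}|$, both $T_{\alpha}$ and $\im~\alpha$ are intervals of $p$ consecutive integers. The map $\alpha|_{T_{\alpha}}\colon T_{\alpha}\to\im~\alpha$ is then a bijective contraction between two intervals of the same diameter $p-1$. I would now argue that such a map must be an isometry: the unique pair in $\im~\alpha$ realising the maximal gap $p-1$ has a preimage pair whose gap is at least $p-1$ by the contraction inequality, hence exactly $p-1$, forcing the endpoints of $T_{\alpha}$ to map onto the endpoints of $\im~\alpha$; peeling off the endpoints and iterating shows every distance is preserved. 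Consequently $T_{\alpha}=\im~\alpha+e$, as required.

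For the converse I would assume $T_{\alpha}=(T_{\alpha})\alpha+e$, so that $\alpha|_{T_{\alpha}}$ is an isometry and therefore $|x_{i}-x_{j}|=|t_{i}-t_{j}|$ for all $i,j$. In particular $|x_{i}-x_{j}|\leq|t_{i}-t_{j}|$, and since the convex transversal $T_{\alpha}$ is admissible, Lemma~\ref{connn} immediately yields that $\alpha$ is a contraction. This direction is short and causes no difficulty.

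The main obstacle is the forward direction, specifically the step that upgrades a bijective contraction between two equal-length intervals to a genuine isometry; this is where the convexity supplied by Lemma~\ref{tofa} is essential, and it is the only place an actual estimate is needed. A secondary point to treat with care is that, a priori, this isometry could be a reflection rather than a translation, so the relation $T_{\alpha}=(T_{\alpha})\alpha+e$ must be understood as the translate (equivalently, isometry) relation in the paper's sense; the hypothesis $p\geq3$ guarantees the intervals have interior points, which keeps the endpoint-peeling argument honest.
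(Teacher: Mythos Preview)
Your proof is correct and follows essentially the same route as the paper: both use Lemma~\ref{tofa} to obtain convexity of $\im~\alpha$ in the forward direction, and both use admissibility of the convex transversal for the converse (you via Lemma~\ref{connn}, the paper by the equivalent direct computation with Lemma~\ref{con}). The one noteworthy difference is that the paper's forward direction is extremely terse---it simply asserts ``$(T_{\alpha})\alpha$ is convex and so $T_{\alpha}=(T_{\alpha})\alpha+e$''---whereas you explicitly supply the endpoint-peeling argument showing that a bijective contraction between two intervals of equal length must be an isometry; this is exactly the step the paper leaves implicit, and your version makes the argument watertight.
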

\begin{proof} Suppose $\alpha$ is a contraction whose $\textbf{Ker}~\alpha$ has a convex transversal, $T_{\alpha}$. Then by Lemma\eqref{tofa} we see that $(T_{\alpha})\alpha= \im\,\alpha$ is convex and so
$T_{\alpha}=(T_{\alpha})\alpha + e$, for some $e\in \mathbb{Z}$.

 Conversely, suppose $\textbf{Ker}~\alpha$ has a convex transversal $T_{\alpha}$ such that $T_{\alpha}=(T_{\alpha})\alpha + e$ for some $e\in \mathbb{Z}$. Take $i<j$ with $a_{i}\in A_{i}$ and $a_{j}\in A_{j}$ ($i,j\in\{1,2,\ldots,p\}$) then $$\left|a_{i}-a_{j}\right|\geq \left|t_{i}-t_{j}\right|=\left|(t_{i}\alpha + e)-(t_{j}\alpha + e)\right|=\left|t_{i}\alpha-t_{j}\alpha\right|=\left|x_{i}-x_{j}\right|$$ or
 $$\left|a_{i}-a_{j}\right|\geq \left|t_{i}-t_{j}\right|=\left|(t_{p-i+1}\alpha + e)-(t_{p-j+1}\alpha + e)\right|=\left|t_{p-i+1}\alpha-t_{p-j+1}\alpha\right|=\left|x_{i}-x_{j}\right|.$$
  Thus, the result follows from Lemma\eqref{con}.

\end{proof}

\section{Regularity of elements in $\mathcal{CP}_{n}$}
An element $a$ in a semigroup $S$ is said to be \emph{regular} if and only if there exists $a^{\prime}\in S$ such that $a=aa^{\prime}a$, if every element of $S$ is regular then the semigroup $S$ is said to be a \emph{regular} semigroup. Many transformation semigroups were shown to be regular or their regular elements have been characterized \cite{gg,ggg,zou,pe, sz,ua, py}. In this section we investigate the regular elements of $\mathcal{CP}_{n}$ where we give a sufficient and necessary condition for an element in the semigroup $\mathcal{CP}_{n}$ and some of its subsemigroups to be regular. Recall that Zhao and Yang \cite{py} characterized regular elements in the semigroup of order preserving partial contractions of a finite chain $\mathcal{OCP}_{n}$, but their characterization depends heavily on order preservedness.
Therefore, their characterization of regular elements would not hold in the more general semigroup $\mathcal{CP}_{n}$. To see this consider $\alpha_{1}=\left(\begin{array}{cc}
                                       \{2\} & \{1,3,5\}   \\
                                       1& 2
                                     \end{array}
 \right)\in \mathcal{CP}_{5}$. Denote $A_{1}=\{2\}$ and $A_{2}=\{1,3,5\}$, but  $A_{1}<A_{2}$ or $A_{2}<A_{1}$ does not hold, as such we cannot apply the condition of  Theorem 2.2 in \cite{py}. Similarly for $\alpha_{2}=\left(\begin{array}{ccc}
                                       \{2\} & \{1,3,5\} & \{4\}  \\
                                       1 & 2 & 3
                                     \end{array}
 \right)\in \mathcal{CP}_{10}$ the conditions of  Theorem 2.2 are not satisfied.  Thus, we cannot conclude by Theorem 2.2 (2) and (3) in \cite{py} that  $\alpha_{1}$ and $\alpha_{2}$ are (or are not)  regular, respectively. This is due to the fact that $A_{i}<A_{j}$  for all $i,j\in \{1,2,\ldots,p\}$ ($i<j$) does not hold generally for the elements of $\mathcal{CP}_{n}$. But we shall see in this section that these elements are  regular.

Now we have the main result of this section:

\begin{theorem}\label{reeg} Let $\alpha$ and $\mathcal{CP}_{n}$ be  as defined in \eqref{1} and \eqref{eq1}, respectively, where $\alpha\in \mathcal{CP}_{n}$. Then $\alpha$ is regular if and only if  there exists an admissible transversal $T_{\alpha}$ of $\textbf{Ker}~\alpha$ such that $\left|t_{j}-t_{i}\right|=\left|t_{j}\alpha-t_{i}\alpha\right|$ for all $t_{j}, t_{i}\in T_{\alpha}$ ($i,j\in \{1,2,\ldots,p\}$). Equivalently, $\alpha$ in $\mathcal{CP}_{n}$ is regular if and only if there exists an admissible transversal $T_{\alpha}$, such that the map $t_{i}\mapsto x_{i}$  ($i\in\{1,2,\ldots, p\}$) is an isometry.
\end{theorem}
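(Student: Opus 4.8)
The plan is to prove the characterization in both directions, using the fact that regularity in $\mathcal{P}_n$ is governed by the existence of a transversal on which $\alpha$ acts injectively in a structure-preserving way. First I would establish the necessary and sufficient condition by relating regularity directly to the existence of a putative inverse $\beta$ with $\alpha\beta\alpha=\alpha$. The key observation is that if $\alpha\beta\alpha=\alpha$, then the set $T_\alpha=(\im\,\alpha)\beta$ meets each kernel class $A_i$ in exactly one point $t_i$, and $t_i\alpha=x_i$; that is, $\beta$ singles out a transversal of $\textbf{Ker}\,\alpha$ on which $\alpha$ restricts bijectively onto $\im\,\alpha$. So the whole problem reduces to identifying which transversals can serve this role and showing that $\beta$ can be completed to an element of $\mathcal{CP}_n$ precisely when the transversal is admissible and $\alpha$ restricts to an isometry on it.

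For the forward direction, I would assume $\alpha$ is regular, pick $\beta$ with $\alpha\beta\alpha=\alpha$, and set $t_i=x_i\beta$ so that $T_\alpha=\{t_1,\ldots,t_p\}$ is a transversal with $t_i\alpha=x_i$. The two things to extract are (a) admissibility of $T_\alpha$ and (b) that $t_i\mapsto x_i$ is an isometry. For (b): since both $\alpha$ and $\beta$ lie in $\mathcal{CP}_n$, they are contractions, so $|x_i-x_j|=|t_i\alpha-t_j\alpha|\le|t_i-t_j|=|x_i\beta-x_j\beta|\le|x_i-x_j|$, forcing equality throughout — hence $|t_i-t_j|=|t_i\alpha-t_j\alpha|$ for all $i,j$, which is exactly the isometry condition. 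For (a), admissibility means $A_i\mapsto t_i$ is a contraction; this follows because $|t_i-t_j|=|x_i-x_j|\le|a_i-a_j|$ for all $a_i\in A_i,a_j\in A_j$, where the last inequality is Lemma~\ref{con} applied to the contraction $\alpha$. Thus both conclusions fall out of the two chained contraction inequalities.

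For the converse, I would assume such an admissible transversal $T_\alpha$ exists with the isometry property and explicitly construct an inverse $\beta\in\mathcal{CP}_n$. The natural candidate is the map with $\dom\,\beta=\im\,\alpha$ sending $x_i\mapsto t_i$; then $\alpha\beta\alpha=\alpha$ holds because $a_i\alpha\beta\alpha=x_i\beta\alpha=t_i\alpha=x_i=a_i\alpha$ for every $a_i\in A_i$. It remains only to verify $\beta\in\mathcal{CP}_n$, i.e.\ that $\beta$ is a contraction: for $x_i,x_j\in\im\,\alpha$ we need $|x_i\beta-x_j\beta|=|t_i-t_j|\le|x_i-x_j|$, and this is immediate from the hypothesis $|t_i-t_j|=|x_i-x_j|$ (the isometry condition delivers exactly equality, which certainly gives the required inequality). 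The equivalence of the two phrasings in the statement is then just the definitional unfolding of ``isometry'' for the map $t_i\mapsto x_i=t_i\alpha$.

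The main obstacle I anticipate is the forward direction's bookkeeping, specifically confirming that $t_i=x_i\beta$ genuinely lands in $A_i$ and that $T_\alpha$ is a bona fide transversal (one point per block) rather than accidentally collapsing two blocks. This is guaranteed by $t_i\alpha=x_i$ being distinct across $i$, but it requires care to phrase cleanly. Everything else reduces to the two-sided squeeze $|x_i-x_j|\le|t_i-t_j|\le|x_i-x_j|$, which is the conceptual heart of the argument; once that equality is in hand, admissibility, the isometry property, and the contraction-ness of the constructed $\beta$ all follow with no further calculation beyond invoking Lemma~\ref{con}.
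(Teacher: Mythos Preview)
Your proposal is correct and follows essentially the same route as the paper: in the forward direction you set $t_i=x_i\beta$ (the paper writes $x_i\gamma$), obtain the transversal, and use the two-sided contraction squeeze $|x_i-x_j|\le|t_i-t_j|\le|x_i-x_j|$ to force the isometry; in the converse you build the inverse as $x_i\mapsto t_i$ and check it is a contraction, exactly as the paper does. The only noteworthy difference is presentational: the paper frames the forward direction as a proof by contradiction (assuming every admissible transversal fails the isometry condition) and leaves the admissibility of the specific transversal $\{x_i\gamma\}$ implicit, whereas you argue directly and verify admissibility explicitly via $|t_i-t_j|=|x_i-x_j|\le|a_i-a_j|$ from Lemma~\ref{con}. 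Your direct version is cleaner, but the underlying mechanism is identical.
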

\begin{proof} Let $\alpha\in \mathcal{CP}_{n}$ be a regular element. Then there exists $\gamma\in \mathcal{CP}_{n}$ such that $\alpha=\alpha\gamma\alpha$. Thus given any $t\in\{1,2,\ldots,p\}$, $x_{t}=A_{t}\alpha=(A_{t}\alpha)\gamma\alpha=(x_{t}\gamma)\alpha$, i. e., $x_{t}\gamma\in A_{t}$ for all $1\leq t\leq p$.
 Now suppose by way of contradiction that for all admissible transversals $T_{\alpha}$ of $\textbf{Ker}~\alpha$, there exist $t_{i}, t_{j}\in T_{\alpha}$ for some $i,j\in \{1,2,\ldots,p\}$  such that \begin{equation}\label{reg} \left|t_{i}\alpha-t_{j}\alpha\right|<\left|t_{i}-t_{j}\right|. \end{equation}

 Let $\{C_{1}, C_{2},\ldots,C_{m}\}$ ($m\leq n$) be the kernel classes of $\ker~\gamma$ arrange in such a way that $A_{k}\alpha\in C_{k}$ (i. e., $x_{k}\in C_{k}$) for $k=1,2,\ldots,p\leq m$. Since $\gamma\in \mathcal{CP}_{n}$, then   $|x_{i}\gamma-x_{j}\gamma|\leq|x_{i}-x_{j}|$ where $x_{i}\in C_{i}$, $ x_{j}\in C_{j}$. Also given any $x_{k}\in C_{k}$, $x_{k}\gamma=a_{k}$ for any $a_{k}\in A_{k}$ ($1\leq k \leq p$), in particular $x_{k}\gamma=t_{k}$, where $t_{k}\in T_{\alpha}$. Thus, $x_{i}\gamma=t_{i}$ and $x_{j}\gamma=t_{j}$ where $t_{i}, t_{j}\in T_{\alpha}$. Observe that using \eqref{reg}, $\left|x_{i}\gamma-x_{j}\gamma\right|=\left|t_{i}-t_{j}\right|>\left|t_{i}\alpha-t_{j}\alpha\right|=\left|x_{i}-x_{j}\right|.$ This contradict the fact that $\gamma$ is a contraction, and hence the result follows.

Conversely, suppose there exists an admissible transversal $T_{\alpha}$ such that  $\left|t_{j}-t_{i}\right|=\left|t_{j}\alpha-t_{i}\alpha\right|$ for all $t_{j}, t_{i}\in T_{\alpha}$.  Define a map say $\gamma$, from $\im~\alpha$ to $T_{\alpha}$ by $x_{k}\gamma=t_{k}$ ($1\leq k\leq p$), the claim here is that, $\gamma$ is an isometry. To see this, let $x_{i}, x_{j}\in \dom~\gamma$  ($i,j\in \{1,2,\ldots,p\}$).  Then, $\left|x_{i}\gamma-x_{j}\gamma\right|=\left|t_{i}-t_{j}\right|=\left|t_{i}\alpha-t_{j}\alpha\right|=\left|x_{i}-x_{j}\right|$, as such $\gamma$ is an isometry and for any $a\in A_{k}$ ($1\leq k\leq p$), $a\alpha\gamma\alpha=x_{k}\gamma\alpha=t_{k}\alpha=a\alpha$. Thus $\alpha$ is regular.
\end{proof}

As a consequence of the above theorem, we give the following definition. An admissible transversal $T_{\alpha}$ is said to be \emph{good} if there is an isometry from $T_{\alpha}$ to $\im~\alpha$. Thus, an element $\alpha$ in $\mathcal{CP}_{n}$ is regular if and only if $\alpha$ has a good transversal. Moreover, it is not difficult to see that every convex transversal is good. We conclude the section with the following (now) obvious result:

\begin{corollary}
The semigroup $\mathcal{CP}_{n}$ ($n\geq 3$) is not regular.
\end{corollary}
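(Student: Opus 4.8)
The plan is to rely entirely on Theorem~\ref{reeg} (and its immediate ``good transversal'' reformulation): since an element of $\mathcal{CP}_n$ is regular precisely when it possesses a good admissible transversal, to prove the semigroup is not regular it suffices to exhibit, for every $n\geq 3$, a single element of $\mathcal{CP}_n$ that admits no good transversal. The cleanest strategy is to choose one explicit witness that lives in $\mathcal{CP}_n$ simultaneously for all $n\geq 3$, rather than constructing a separate example for each $n$.

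First I would propose the partial map $\alpha=\left(\begin{array}{cc} 1 & 3 \\ 1 & 2 \end{array}\right)$, with $\dom\,\alpha=\{1,3\}$ and $\im\,\alpha=\{1,2\}$. I would check that $\alpha\in\mathcal{CP}_n$ for all $n\geq 3$: the only nontrivial pair gives $|1\alpha-3\alpha|=1\leq 2=|1-3|$, so $\alpha$ is a contraction, and $\{1,3\}\subseteq[n]$ whenever $n\geq 3$.

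Next I would analyse its transversals. Here $\textbf{Ker}\,\alpha=\{\{1\},\{3\}\}$ consists of two singleton blocks, so the unique transversal is $T_\alpha=\{1,3\}$, and it is admissible (the admissibility inequality $|t_i-t_j|\leq|a_i-a_j|$ holds with equality because each block is a singleton). The induced map $t_i\mapsto x_i$ sends $1\mapsto 1$ and $3\mapsto 2$, and since $|1-3|=2\neq 1=|1\alpha-3\alpha|$ it is a strict contraction, not an isometry. Thus the only admissible transversal of $\alpha$ fails the isometry condition of Theorem~\ref{reeg}, so $\alpha$ has no good transversal and is therefore not regular; consequently $\mathcal{CP}_n$ is not a regular semigroup.

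There is no genuine analytic obstacle in this argument, as the regularity criterion has already done the heavy lifting in Theorem~\ref{reeg}. The only point requiring a little care is the uniformity in $n$: one must ensure the chosen witness embeds in $\mathcal{CP}_n$ for every $n\geq 3$, including the boundary case $n=3$. This is exactly why I favour the height-$2$, two-singleton element above over, say, the full-domain map $\left(\begin{array}{ccc} 1 & 2 & 4 \\ 1 & 2 & 3 \end{array}\right)$, whose unique (singleton) transversal $\{1,2,4\}$ is likewise not an isometry onto $\im\,\alpha$ but which only settles the claim for $n\geq 4$.
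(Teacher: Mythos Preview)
Your proposal is correct. The paper does not actually supply a proof of this corollary: it simply states the result as ``(now) obvious'' immediately after the regularity criterion and the definition of a good transversal, so your explicit construction of a witness $\alpha=\left(\begin{smallmatrix}1&3\\1&2\end{smallmatrix}\right)\in\mathcal{CP}_n$ with no good transversal is exactly the kind of argument the paper is leaving to the reader, and your attention to uniformity in $n\geq 3$ is a nice touch.
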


\section{Green's Relations for the semigroup $\mathcal{CP}_{n}$}
 Let $S$ be a semigroup and $a,b\in S$. If $S^{1}a=S^{1}b$ (i. e., $a$ and $b$ generate the same principal left ideal) then we say that $a$ and $b$ are related by $\mathcal{L}$ and we write $(a,b)\in \mathcal{L}$ or $a\mathcal{L}b$, if $aS^{1}=bS^{1}$ (i. e., $a$ and $b$ generate the same principal right ideal) then we say $a$ and $b$ are related by $\mathcal{R}$ and we write $(a,b)\in \mathcal{R}$ or $a\mathcal{R}b$ and if $S^{1}aS^{1}=S^{1}bS^{1}$ (i. e., $a$ and $b$ generate the same principal two sided ideal) then we say $a$ and $b$ are related by $\mathcal{J}$ and we write $(a,b)\in \mathcal{J}$ or $a\mathcal{J}b$. Each of the relations $\mathcal{L}$, $\mathcal{R}$ and $\mathcal{J}$ is an equivalence on $S$. The relations $\mathcal{H}= \mathcal{L}\cap \mathcal{R}$ and $\mathcal{D}=\mathcal{L}\circ \mathcal{R}$ are also equivalences on $S$. These five equivalences are known as Green's relations, first introduced by J. A. Green in 1951 \cite{gr}.

 To begin our investigation we introduce the following concept. A subset $A$ of $[n]$ is said to be \emph{translated} by an integer $e$ written as $A+e$  if $A+e=\{a+e: a\in A\}$.

 For two subsets $A$ and  $ B$  of $ [n]$, $A$ and $B$ are said to be $e-translates$ if and only if $B=A+e$ for some $e\in \mathbb{Z}$.

  Now, as in \eqref{1} let $\alpha$ and $\beta$ in $\mathcal{CP}_{n}$ be expressed as: \begin{equation}\label{2} \alpha=\left(\begin{array}{cccc}
                                                                            A_{1} & A_{2} & \ldots & A_{p} \\
                                                                            x_{1} & x_{2} & \ldots & x_{p}
                                                                          \end{array}
\right)~~ and~~ \beta=\left(\begin{array}{cccc}
                                                                            B_{1} & B_{2} & \ldots & B_{p} \\
                                                                            y_{1} & y_{2} & \ldots & y_{p}
                                                                          \end{array}
\right)~~ (p\leq n).\end{equation}

 Then $\dom~\alpha$ and $\dom~\beta$ are said to be $e-translates$ if and only if $A_{i}=B_{i}+e$ ($i\in\{1,2,\ldots,p\}$) for some $e\in \mathbb{Z}$. Similarly, $\im~\alpha$ and $\im~\beta$ are said to be $e-translates$  if and only if $\im~\beta=\im~\alpha+e$ (or $B_{i}\beta=A_{i}\alpha+e$)  for all $1\leq i\leq p$, for some $e\in \mathbb{Z}$.

Next, let $\alpha$ and $\beta\in \mathcal{CP}_{n}$ ($1\leq p\leq n$) be as defined in \eqref{2}, then we introduce further the following concept which helps towards characterizing the Green's $\mathcal{L}$-relation in $\mathcal{CP}_{n}$.

A partition $\textbf{Ker}~\gamma$ (for $\gamma \in \mathcal{P}_{n}$) is said to be a \emph{refinement} of the partition $\textbf{Ker}~\alpha$ if $\ker~\gamma\subseteq \ker~\alpha$. Thus, if $\textbf{Ker}~\gamma=\{A_{1}^{'}, A_{2}^{'}, \ldots, A_{s}^{'}\}$ and $\textbf{Ker}~\alpha=\{A_{1}, A_{2}, \ldots, A_{p}\}$ then $p\leq s.$
A refined partition $\textbf{Ker}~\gamma$ of $\textbf{Ker}~\alpha$ is said to be\emph{ maximum}, if  $\ker~\gamma\subseteq \ker~\alpha$ and every refined relation of $\ker~\alpha$ say $\ker~\theta$ is contained in $\ker~\gamma$. Moreover, if there are at least two maximal relations say $\ker~\tau_{i}$ ($i\geq 2$)  contained in $\ker~\alpha$, then $\ker~\gamma$ is maximum if $\ker~\gamma={\overset{}{\underset{i\geq 2}\cap}}\ker~\tau_{i}.$ A maximum  refined partition $\textbf{Ker}~\gamma$ of $\textbf{Ker}~\alpha$ is said to be \emph{admissible} if it has an admissible transversal.

We immediately have the following lemma:

\begin{lemma}\label{bb} For every $\alpha\in \mathcal{CP}_{n}$,  $\textbf{Ker}~\alpha$   has   a maximum finer partition say $\textbf{Ker}~\gamma$ (for some $\gamma\in \mathcal{CP}_{n}$) with an admissible transversal.

\end{lemma}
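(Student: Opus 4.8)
The plan is to reduce the statement to a purely combinatorial existence claim about refinements of the partition of $\dom~\alpha$, and then to recover the required $\gamma$ by a one-line construction. The key preliminary observation is that \emph{realizing} an admissible refinement costs nothing: if $P=\{C_{1}\prec\cdots\prec C_{s}\}$ is any partition of $\dom~\alpha$ that possesses an admissible transversal $\{t_{1},\dots,t_{s}\}$ (with $t_{i}\in C_{i}$), then the map $\gamma$ on $\dom~\alpha$ defined by $C_{i}\gamma=t_{i}$ satisfies $\textbf{Ker}~\gamma=P$ (the $t_{i}$ are distinct since the $C_{i}$ are disjoint), and by the very definition of admissibility $\left|t_{i}-t_{j}\right|\le\left|a-b\right|$ for all $a\in C_{i}$, $b\in C_{j}$. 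Hence $\gamma$ is a contraction by Lemma~\ref{con}, i.e. $\gamma\in\mathcal{CP}_{n}$. So it suffices to produce a \emph{maximum} refinement of $\textbf{Ker}~\alpha$ that carries an admissible transversal.

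For existence I would first note that the partition of $\dom~\alpha$ into singletons refines $\textbf{Ker}~\alpha$ and is trivially admissible, since the map sending each singleton to itself is an isometry, \emph{a fortiori} a contraction. Thus the family $\mathcal{F}$ of refinements of $\textbf{Ker}~\alpha$ possessing an admissible transversal is non-empty. Because $[n]$ is finite, the lattice of partitions of $\dom~\alpha$ is finite, so $\mathcal{F}$ has maximal (coarsest admissible) elements $\textbf{Ker}~\tau_{1},\dots,\textbf{Ker}~\tau_{m}$. If $m=1$ I set $\textbf{Ker}~\gamma=\textbf{Ker}~\tau_{1}$; if $m\ge 2$ I set $\ker~\gamma=\bigcap_{i}\ker~\tau_{i}$, which is again an equivalence contained in each $\ker~\tau_{i}\subseteq\ker~\alpha$, hence a refinement of $\textbf{Ker}~\alpha$, and is by construction the maximum refinement in the sense defined above.

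The substantive step, and the main obstacle, is to verify that this canonical maximum refinement $\textbf{Ker}~\gamma$ \emph{itself} carries an admissible transversal. This is not automatic: admissibility is not inherited under refinement, since by Lemma~\ref{p1} splitting a block can create an interior block of size $\ge 2$ and thereby destroy even relative convexity, and indeed the meet of two admissible partitions can fail to be admissible in general. The point I would exploit is the maximality of the $\textbf{Ker}~\tau_{i}$: along any maximal run of blocks that is linearly ordered under $<$, I would argue that the meet $\bigcap_{i}\textbf{Ker}~\tau_{i}$ has every interior block forced down to a singleton, since an interior block of size $\ge 2$ would, by Lemma~\ref{p1}, obstruct the admissible transversal that each $\tau_{i}$ nonetheless carries, forcing that block to already be split in each $\tau_{i}$ and hence in the meet. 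Once all interior blocks are singletons, Lemma~\ref{p3} supplies an explicit admissible transversal (the lone point of each interior block together with $\max$ of the first and $\min$ of the last block of the run), and for the blocks not linearly ordered under $<$ one falls back on Lemma~\ref{min} and Remark~\ref{minn}(ii)--(iii) to seat the transversal elements at minimal mutual distance. Combining these with Lemma~\ref{connn} would then certify that $\textbf{Ker}~\gamma$ is admissible, after which the construction of the first paragraph yields the desired $\gamma\in\mathcal{CP}_{n}$; the delicate and potentially error-prone part is precisely the interplay between maximality and the non-convex, non-linearly-ordered blocks, where Lemma~\ref{p1} does not directly apply.
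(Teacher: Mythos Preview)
Your approach is more abstract than the paper's: where the paper proceeds by a two-case analysis (the blocks of $\textbf{Ker}~\alpha$ linearly ordered under $<$, or not) and in each case writes down the candidate refinement and transversal explicitly, you argue via finiteness of the partition lattice, extract the maximal admissible refinements $\textbf{Ker}~\tau_i$, and form their meet---which is exactly the paper's \emph{definition} of ``maximum''. Your opening paragraph, realizing any admissible partition as $\textbf{Ker}~\gamma$ for some contraction $\gamma$ by sending each block to its transversal element, is a cleaner way to discharge the ``for some $\gamma\in\mathcal{CP}_n$'' clause than the paper's implicit treatment.

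You correctly isolate the only substantive obstacle: showing that the meet $\bigcap_i\ker~\tau_i$ is itself admissible. Your argument for a linearly ordered run (each $\tau_i$ must already have every interior block a singleton by Lemma~\ref{p1} together with Remark~\ref{minn}(iii), hence so does the meet, whence Lemma~\ref{p3} furnishes a transversal) is sound and is essentially the paper's Case~i. For the non-linearly-ordered situation, however, your appeal to Lemma~\ref{min} and Remark~\ref{minn}(ii)--(iii) does not close the gap: those results supply \emph{pairwise} minimal distances, which suffices for two blocks, but for three or more interleaved blocks the element of $C_j$ realizing the minimum of $|t_i-t_j|$ and the element realizing the minimum of $|t_j-t_k|$ need not coincide, so no global admissible transversal is assembled. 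This is a genuine gap in your sketch. It is worth noting that the paper's own Case~ii is also informal at exactly this point---it prescribes an iterative ``split a convex block into singletons and re-check'' procedure with the all-singleton partition as ultimate fallback, and then asserts without argument that the outcome coincides with the intersection of the maximal admissible refinements---so the difficulty you flag is real and is not fully resolved in the paper either.
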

\begin{proof}
Let $\alpha\in \mathcal{CP}_{n}$ with $\textbf{Ker}~\alpha =\{A_{1}, A_{2}, \ldots, A_{p}\}$ either ordered with the usual ordering or not, so  there are two cases to consider:\\
Case i. Suppose $\textbf{Ker}~\alpha$ is ordered with the usual order. Thus, $\textbf{Ker}~\alpha =\{A_{1}< A_{2}< \ldots< A_{p}\}$. If $A_{i}=\{a_{i}\}$ for all $i\in\{2,\ldots,p-1\}$ then take $\textbf{Ker}~\gamma=\textbf{Ker}~\alpha =\{A_{1}<\{a_{2}\}<\ldots<\{a_{p-1}\}<A_{p}\}$ for some $\gamma\in \mathcal{CP}_{n}$ and observe that if we take $T_{\gamma}=\{\max A_{1}<a_{2}<\ldots<a_{p-1}<\min A_{p}\}$, then the map $\theta=\left( \begin{array}{ccccc}
                            A_{1} & a_{2} & \ldots &a_{p-1}& A_{p} \\
                            \max A_{1} & a_{2} & \ldots & a_{p-1} & \min A_{p}
                           \end{array}
\right)$ is a contraction and hence $\textbf{Ker}~\gamma$ is admissible. Notice that any admissible finer relation of $\ker~\alpha$  is contain in the relation $\ker~\gamma$. Thus, $\textbf{Ker}~\gamma$ is maximum admissible finer partition.

If $|A_{i}|>1$ for  some $i\in \{2\ldots,p-1\},$ then let $C={\overset{p-1}{\underset{i=1}\cup}}A_{i}=\{a_{2}<\ldots<a_{s}\}$ ( for some $s>p$) and take $\textbf{Ker}~\gamma=\{A_{1}<\{a_{2}\}<\ldots<\{a_{s}\}<A_{p}\}$ for some $\gamma\in \mathcal{CP}_{n}$ and observe that if we take $T_{\gamma}=\{\max A_{1}<a_{2}<\ldots<a_{s}<\min A_{p}\}$, then the map $\theta=\left( \begin{array}{ccccc}
                            A_{1} & a_{2} & \ldots &a_{s}& A_{p} \\
                            \max A_{1} & a_{2} & \ldots & a_{s} & \min A_{p}
                           \end{array}
\right)$ is a contraction and hence $\textbf{Ker}~\gamma$ is admissible. Notice that any admissible finer relation of $\ker~\alpha$ is contain in the relation $\ker~\gamma$. Thus, $\textbf{Ker}~\gamma$ is maximum admissible partition.\\
Case ii. Suppose $\textbf{Ker}~\alpha$ is not ordered, where $\textbf{Ker}~\alpha=\{A_{1}, A_{2}, \ldots, A_{p}\}$.

If $\textbf{Ker}~\alpha$ have a convex transversal, we are done, we take $\textbf{Ker}~\gamma=\textbf{Ker}~\alpha$ and it is maximum and admissible.
If $\textbf{Ker}~\alpha$ have a relatively convex transversal which is admissible, we are also done,  take $\textbf{Ker}~\gamma=\textbf{Ker}~\alpha$ and it is maximum.

Now suppose $\textbf{Ker}~\alpha$ have no convex or admissible relatively convex transversal. If there exists a convex block $A_{k}$ of order $j$ ($j<n$) for some $k=1,\ldots,p$ in the $\textbf{Ker}~\alpha$, we partition $A_{k}$ into singleton blocks $\{a_{k_{i}}\}$ and let $\textbf{Ker}~\gamma=\{A_{1},\ldots,A_{k-1}, \{a_{k_{1}}\},\{a_{k_{2}}\},\ldots,\{a_{k_{j}}\}, A_{k+1},\ldots, A_{p}  \}$, then we can check wether $\textbf{Ker}~\gamma$ have an admissible transversal, if it does then we are done and if it does not, we then partition $A_{k-1}$ (or $A_{k+1}$) into singleton blocks, and we continue in this fashion until we get one and if all fails then at least we can partition $\textbf{Ker}~\alpha$ into singletons, and the resultant relation is the intersection of all maximal relations contain in $\ker~\alpha$ which is maximum and admissible. This complete the proof.
\end{proof}

\begin{remark}\label{re3} If $\alpha$ is a regular element in $\mathcal{CP}_{n}$, then in view of Theorem\eqref{reg}, $\textbf{Ker}~\alpha$ is a maximum admissible refinement of $\textbf{Ker}~\alpha$, since it has an admissible transversal.
\end{remark}

We now give a characterization of the Green's $\mathcal{L}$-relation  on $\mathcal{CP}_{n}$ as follows:

\begin{theorem}\label{14} Let $\alpha, \beta\in \mathcal{CP}_{n}$ be as expressed in \eqref{2}. Then $(\alpha, \beta)\in \mathcal{L}$ if and only if   $\textbf{Ker}~\alpha$ and $\textbf{Ker}~\beta$ have  admissible finer partitions,  $\textbf{Ker}~\gamma_{1}$ and $\textbf{Ker}~\gamma_{2}$ (for some $\gamma_{1}$ and $\gamma_{2}$ in $\mathcal{CP}_{n}$), respectively, such that there exists either a translation $\tau_{i}\mapsto \sigma_{i}$ and $\tau_{i}\alpha= \sigma_{i}\beta$ or a reflection $\tau_{i}\mapsto \sigma_{s-i+1}$ and $\tau_{i}\alpha= \sigma_{s-i+1}\beta$ for all $i=1,\ldots,s$ ($s\geq p$), where $A_{\alpha}=\{\tau_{1}, \ldots,\tau_{s}\}$ and $B_{\alpha}=\{\sigma_{1}, \ldots,\sigma_{s}\}$, are  the admissible transversals of $\textbf{Ker}~\gamma_{1}$ and $\textbf{Ker}~\gamma_{2}$, respectively.
\end{theorem}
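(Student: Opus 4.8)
The plan is to prove both implications by unwinding Green's $\mathcal{L}$ into left divisibility and transporting the contraction constraint onto suitable transversals. Recall that $(\alpha,\beta)\in\mathcal{L}$ means $\mathcal{CP}_{n}^{1}\alpha=\mathcal{CP}_{n}^{1}\beta$, i.e.\ there exist $u,v\in\mathcal{CP}_{n}^{1}$ with $\alpha=u\beta$ and $\beta=v\alpha$; after truncating domains I may assume $\dom u=\dom\alpha$, $\im u\subseteq\dom\beta$, $\dom v=\dom\beta$, $\im v\subseteq\dom\alpha$. Two observations are immediate and used throughout. First, since $x\alpha=(xu)\beta$ we get $\im\alpha\subseteq\im\beta$, and symmetrically $\im\beta\subseteq\im\alpha$, so $\im\alpha=\im\beta$ and $\alpha,\beta$ share the same number $p$ of blocks. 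Second, $xu=x'u$ forces $x\alpha=x'\alpha$, so $\ker u\subseteq\ker\alpha$ and likewise $\ker v\subseteq\ker\beta$; thus $\textbf{Ker}\,u$ and $\textbf{Ker}\,v$ already refine $\textbf{Ker}\,\alpha$ and $\textbf{Ker}\,\beta$.

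For the converse (sufficiency), the routine half, I would argue directly by exhibiting the two multipliers. Given the admissible refinements $\textbf{Ker}\,\gamma_{1},\textbf{Ker}\,\gamma_{2}$ with admissible transversals $A_{\alpha}=\{\tau_{1},\dots,\tau_{s}\}$, $B_{\alpha}=\{\sigma_{1},\dots,\sigma_{s}\}$ and the translation (resp.\ reflection) satisfying $\tau_{i}\alpha=\sigma_{i}\beta$, define $u$ on $\dom\alpha$ by sending every element of the $\gamma_{1}$-block of $\tau_{i}$ to $\sigma_{i}$, and $v$ on $\dom\beta$ by sending every element of the $\gamma_{2}$-block of $\sigma_{i}$ to $\tau_{i}$. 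Then $(xu)\beta=\sigma_{i}\beta=\tau_{i}\alpha=x\alpha$ gives $\alpha=u\beta$, and symmetrically $\beta=v\alpha$. The only point to verify is $u,v\in\mathcal{CP}_{n}$: for $x,x'$ in the $\gamma_{1}$-blocks of $\tau_{i},\tau_{j}$ one has $|xu-x'u|=|\sigma_{i}-\sigma_{j}|=|\tau_{i}-\tau_{j}|\le|x-x'|$, where the middle equality is the translation/reflection (an isometry) and the last inequality is admissibility of $A_{\alpha}$; hence $u$ is a contraction, and $v$ likewise, so $(\alpha,\beta)\in\mathcal{L}$.

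For the forward (necessity) direction I would manufacture the refinements and the isometry from $u,v$ by a retraction device. Consider $uv\colon\dom\alpha\to\dom\alpha$ (well defined since $\im u\subseteq\dom\beta$ and $\im v\subseteq\dom\alpha$); it is a contraction and, from $\alpha=u\beta=uv\alpha$, it satisfies $(uv)\alpha=\alpha$, so it carries each $\alpha$-block into itself. Let $E=\bigcap_{k\ge1}(\dom\alpha)(uv)^{k}$ be its eventual image. On the finite set $E$ the contraction $uv$ is a bijection, and a distance-nonincreasing bijection of a finite subset of the chain is forced to be an isometry; letting $m$ be its order and choosing $N$ a multiple of $m$ large enough that $(\dom\alpha)(uv)^{N}=E$, the map $\pi=(uv)^{N}$ is a contraction retracting $\dom\alpha$ onto $E$ with $\pi|_{E}=\mathrm{id}$. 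Since $\pi\alpha=\alpha$ we get $\ker\pi\subseteq\ker\alpha$, so $\textbf{Ker}\,\pi=:\textbf{Ker}\,\gamma_{1}$ refines $\textbf{Ker}\,\alpha$; and since $\pi$ is a contraction with $\pi|_{E}=\mathrm{id}$, for $t,t'\in E$ in distinct $\pi$-blocks $P,P'$ and any $a\in P,a'\in P'$ we have $|t-t'|=|a\pi-a'\pi|\le|a-a'|$, so $E=A_{\alpha}:=\im\pi$ is an \emph{admissible} transversal of $\textbf{Ker}\,\gamma_{1}$. The same construction applied to $vu$ on $\dom\beta$ yields a retraction $\rho$, an admissible refinement $\textbf{Ker}\,\gamma_{2}$, and the admissible transversal $F=B_{\alpha}:=\im\rho$.

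It then remains to match the two transversals. As $uv|_{E}$ is an isometry, the chain $|t\,uv-t'uv|\le|tu-t'u|\le|t-t'|$ collapses to equalities for $t,t'\in E$, so $u$ restricts to an \emph{isometric} bijection $E\to F$ (with inverse $v|_{F}$, the checks $u(E)=F$ and $|E|=|F|=:s\ge p$ being routine via the symmetric statement for $vu$). An isometric bijection between finite subsets of the chain $[n]$ is monotone, hence, after listing $A_{\alpha},B_{\alpha}$ in their natural order, is either order-preserving — a translation $\tau_{i}\mapsto\sigma_{i}$, $\sigma_{i}=\tau_{i}+e$ — or order-reversing — a reflection $\tau_{i}\mapsto\sigma_{s-i+1}$; the value-matching is then automatic since $\sigma_{i}\beta=(\tau_{i}u)\beta=\tau_{i}(u\beta)=\tau_{i}\alpha$ (and $\sigma_{s-i+1}\beta=\tau_{i}\alpha$ in the reflection case), giving exactly the stated condition. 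I expect the genuine obstacle to lie here: the twin demands that the refining partitions be \emph{admissible} and that the transversal correspondence be a true \emph{isometry} rather than merely a contraction. Both are met by the single idea of passing to the eventual image of $uv$ and its idempotent power $\pi$ — the retraction being a contraction forces admissibility of its image, while bijectivity of a contraction on a finite chain forces the isometry, after which the translation/reflection dichotomy is just the elementary classification of monotone isometries of a chain.
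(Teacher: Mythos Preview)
Your sufficiency argument coincides with the paper's: both define the multipliers by sending each block of the admissible refinement to the corresponding transversal point on the other side, and verify the contraction property via the chain $|\sigma_i-\sigma_j|=|\tau_i-\tau_j|\le|a-a'|$ (isometry plus admissibility).

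For necessity the two proofs diverge. The paper works directly with the multipliers $\gamma_1,\gamma_2$ satisfying $\alpha=\gamma_1\beta$, $\beta=\gamma_2\alpha$: it takes $\textbf{Ker}\,\gamma_1,\textbf{Ker}\,\gamma_2$ themselves as the refining partitions, appeals to Lemma~\ref{min} to select pairwise-closest representatives $\tau_i,\sigma_i$ in the blocks, and then squeezes the two contraction inequalities against one another to force $|\tau_i-\tau_j|=|\sigma_i-\sigma_j|$, followed by a separate argument (by contradiction) that the resulting isometry is compatible with the images, i.e.\ $\tau_i\alpha=\sigma_i\beta$ (or the reflected version). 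Your route is different: you iterate the composite $uv$ to its eventual image $E$, pass to the idempotent power $\pi=(uv)^N$, and take $\textbf{Ker}\,\pi$ as the refinement with $E$ as its transversal. This buys you several things the paper has to argue for separately or leaves implicit: admissibility of the transversal is automatic since $\pi$ is a contraction retraction onto $E$; the equality $|E|=|F|$ (hence the common $s$) is established cleanly via the bijections $u|_E,v|_F$, whereas the paper simply writes both $\textbf{Ker}\,\gamma_1$ and $\textbf{Ker}\,\gamma_2$ with the same index set $\{1,\dots,s\}$ without justification; and the isometry $u|_E\colon E\to F$ drops out of the single principle that a bijective contraction on a finite subset of a chain is an isometry, after which the translation/reflection dichotomy and the value-matching $\tau_i\alpha=\sigma_i\beta$ are immediate rather than requiring a further contradiction argument. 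Your argument is correct and, on the necessity side, tighter than the paper's.
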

\begin{proof}
Let $\alpha,\beta\in \mathcal{CP}_{n}$ be as expressed in \eqref{2} such that $(\alpha, \beta)\in \mathcal{L}$.  That is to say there exist $\gamma_{1}$, $\gamma_{2}\in(\mathcal{CP}_{n})^{1}$ such that
  \begin{equation}\label{x3} \alpha=\gamma_{1}\beta \ \ and \ \   \beta=\gamma_{2}\alpha. \end{equation}
 Let $\textbf{Ker}~\gamma_{1}=\{A_{1}^{'}, A_{2}^{'}, \ldots, A_{s}^{'}\}$ and $\textbf{Ker}~\gamma_{2}=\{B_{1}^{'}, B_{2}^{'}, \ldots, B_{s}^{'}\}$ ($s\geq p$). Whereas  $\textbf{Ker}~\alpha=\{A_{1}, A_{2}, \ldots, A_{p}\}$ and $\textbf{Ker}~\beta=\{B_{1}, B_{2}, \ldots, B_{p}\}$.  Clearly $\textbf{Ker}~\gamma_{1}$ and $\textbf{Ker}~\gamma_{2}$ are finer partitions of $\textbf{Ker}~\alpha$ and $\textbf{Ker}~\beta$, respectively.

Now let $a_{i}\in A_{i}^{'}$ and $a_{j}\in A_{j}^{'}$ ($1\leq i, j,\leq s$) where $A_{i}^{'}\subseteq A_{u}$ and $A_{j}^{'}\subseteq A_{v}$  for some  $1\leq u, v,\leq s$. Then since $\alpha=\gamma_{1}\beta$, there exists $b_{u}^{'}\in B_{u}$, $b_{v}^{'}\in B_{v}$ (for some $1\leq u,v\leq p$) such that $a_{i}\gamma_{1}=b_{u}^{'}$ and $a_{j}\gamma_{1}=b_{v}^{'}$, and that $a_{i}\gamma_{1}\beta=b_{u}^{'}\beta=a_{i}\alpha$ and $a_{j}\gamma_{1}\beta=b_{v}^{'}\beta=a_{i}\alpha$.
Since, $\gamma_{1}$ is a contraction we have; \begin{equation}\label{x1} \left|b_{u}^{'}-b_{v}^{'}\right|=\left|a_{i}\gamma_{1}-a_{j}\gamma_{1}\right|\leq \left|a_{i}-a_{j}\right|~for~all~a_{i}^{'}\in A_{i}^{'}~~and~a_{j}^{'}\in A_{j}^{'}~for~i,j\in\{1,\ldots,s\}\end{equation}

Now, recall that by Lemma\eqref{1} there exists $\tau_{i}\in A_{i}^{'}$ and $\tau_{j}\in A_{j}^{'}$ such that \begin{equation}\label{x2} \left|\tau_{i}-\tau_{j}\right|\leq \left|a_{i}-a_{j}\right|~for~all~a_{i}\in A_{i}^{'}~~and~a_{j}\in A_{j}^{'}~for~i,j\in\{1,\ldots,s\}\end{equation}

Thus, equation \eqref{x1} and \eqref{x2} implies that \begin{equation}\label{x3} \left|b_{u}^{'}-b_{v}^{'}\right|\leq\left|\tau_{i}-\tau_{j}\right|.\end{equation}

Similarly, let  $b_{i}\in B_{i}^{'}$ and $b_{j}\in B_{j}^{'}$ ($1\leq i, j,\leq s$) where $B_{i}^{'}\subseteq B_{u}$ and $B_{j}^{'}\subseteq B_{v}$ for some $1\leq u, v,\leq s$.  Then since $\beta=\gamma_{2}\alpha$, there exist $a_{u}^{'}\in A_{u}$, $a_{v}^{'}\in A_{v}$ (for some $1\leq u,v\leq p$) such that $b_{i}\gamma_{2}=a_{u}^{'}$ and $b_{j}\gamma_{2}=a_{v}^{'}$, and that $b_{i}\gamma_{2}\alpha=a_{u}^{'}\alpha=b_{i}\beta$ and $b_{j}\gamma_{2}\alpha=a_{v}^{'}\alpha=b_{i}\beta$.

Since, $\gamma_{2}$ is a contraction we have; \begin{equation}\label{x4} \left|a_{u}^{'}-a_{v}^{'}\right|=\left|b_{i}\gamma_{2}-b_{j}\gamma_{1}\right|\leq \left|b_{i}-b_{j}\right|~for~all~b_{i}^{'}\in B_{i}^{'}~~and~a_{j}^{'}\in B_{j}^{'}~for~i,j\in\{1,\ldots,s\}\end{equation}

Now, recall  that by Lemma\eqref{1} there exists $\sigma_{i}\in B_{i}^{'}$ and $\sigma_{j}\in B_{j}^{'}$ such that \begin{equation}\label{x5} \left|\sigma_{i}-\sigma_{j}\right|\leq \left|b_{i}-b_{j}\right|~for~all~b_{i}\in B_{i}^{'}~~and~b_{j}\in B_{j}^{'}~for~i,j\in\{1,\ldots,s\}\end{equation}

Thus, equation \eqref{x4} and \eqref{x5} implies that \begin{equation}\label{x6} \left|a_{u}^{'}-a_{v}^{'}\right|\leq\left|\sigma_{i}-\sigma_{j}\right|.\end{equation}

Notice that, since $\sigma_{i}\in B_{i}^{'}\subset B_{u}$ and $\sigma_{j}\in B_{j}^{'}\subset B_{v}$ then  from \eqref{x3} we have \begin{equation}\label{x7}  \left|\sigma_{i}-\sigma_{j}\right|\leq\left|b_{u}^{'}-b_{v}^{'}\right|\leq\left|\tau_{i}-\tau_{j}\right| \end{equation}
and  also  since $\tau_{i}\in A_{i}^{'}\subset A_{u}$ and $\tau_{j}\in A_{j}^{'}\subset B_{v}$ then from equation\eqref{x6} we have; \begin{equation}\label{x8}  \left|\tau_{i}-\tau_{j}\right|\leq  \left|a_{u}^{'}-a_{v}^{'}\right|\leq\left|\sigma_{i}-\sigma_{j}\right| \end{equation}
Now equation \eqref{x7} and \eqref{x8} ensure that \begin{equation}\label{x9}  \left|\tau_{i}-\tau_{j}\right|=\left|\sigma_{i}-\sigma_{j}\right|. \end{equation}

This shows that there is an isometry from $\{\tau_{i}\in A_{i}^{'}: 1\leq i\leq s\}=A_{\alpha}$ and $\{\sigma_{i}\in B_{i}^{'}: 1\leq i\leq s\}=B_{\beta}$. Now observe that  by equation \eqref{x2} and \eqref{x5} the maps $A_{i}^{'}\mapsto \tau_{i}$ and $B_{i}^{'}\mapsto \sigma_{i}$ for all $i=1,\ldots,s$ are  contractions. Thus, $A_{\alpha}$ and $B_{\beta}$ are admissible and hence $\textbf{Ker}~\gamma_{1}$ and $\textbf{Ker}~\gamma_{2}$ are admissible finer partitions of $\textbf{Ker}~\alpha$ and $\textbf{Ker}~\beta$, respectively. And equation\eqref{x9} shows  there is translation $\tau_{i}\mapsto \sigma_{i}$ or a reflection $\tau_{i}\mapsto \sigma_{s-i+1}$ ($1\leq i\leq s$). Now we claim that if $\tau_{i}\mapsto \sigma_{i}$ then $\tau_{i}\alpha= \sigma_{i}\beta$, and if $\tau_{i}\mapsto \sigma_{s-i+1}$ then $\tau_{i}\alpha= \sigma_{i}\beta$.

Now to show that $\tau_{i}\alpha=\sigma_{i}\beta$ ($1\leq i\leq s$), we suppose by way of contradiction that $\tau_{i}\alpha=x_{i}$ ($1\leq i\leq s$) and that, $\sigma_{s-u-1}\beta=x_{s-u}$,  $\sigma_{s-u}\beta=x_{s-u-1}$ and  $\sigma_{j}\beta=x_{j}$ ( $1\leq u
+1\leq j\leq s-u-2$  and $0\leq u\leq s-1$) where $\tau_{i}\in A_{\alpha}$, $\sigma_{i}\in A_{\beta}$ ($1\leq i\leq s$). Let $A_{\alpha}=\{\tau_{1}<\tau_{2}<\ldots<\tau_{s}\}$.

It is clear from \eqref{x5} that $A_{\alpha}\subseteq \dom~\gamma_{1}$ and $B_{\beta}\subseteq \dom~\gamma_{2}$. Thus, $\tau_{s-u-2}$, $\tau_{s-u-1}$, $\tau_{s-u}\in \dom~\gamma_{1}$. Notice that $\alpha=\gamma_{1}\beta$, then $\tau_{s-u-2}\gamma_{1}=b_{s-u-2}^{\prime}$, $\tau_{s-u-1}\gamma_{1}=b_{s-u}^{\prime}$ and $\tau_{s-u}\gamma_{1}=b_{s-u-1}^{\prime}$ for any $b_{s-u-2}^{\prime}\in B_{s-u-2}^{'}$, $b_{s-u-1}^{\prime}\in B_{s-u-1}^{'}$, $b_{s-u}^{\prime}\in B_{s-u}^{'}$. Thus, in particular,$\tau_{s-u-2}\gamma_{1}=\sigma_{s-u-2}$, $\tau_{s-u-1}\gamma_{1}=\sigma_{s-u}$ and $\tau_{s-u}\gamma_{1}=\sigma_{s-u-1}$.  Therefore  \begin{equation} \left|\tau_{s-u-2}\gamma_{1}-\tau_{s-u-1}\gamma_{1}\right|=\left|\sigma_{s-u-2}-\sigma_{s-u}\right|=\left|\tau_{s-u-2}-\tau_{s-u}\right|>\left|\tau_{s-u-2}-\tau_{s-u-1}\right|.\end{equation}
This contradicts the fact that $\gamma_{1}$ is a contraction, as such $\tau_{i}\alpha= \sigma_{i}\beta$ for all $1\leq i\leq s$.  Using the same argument, we can equally show that if $\tau_{i}\mapsto \sigma_{s-i+1}$ then $\tau_{i}\alpha= \sigma_{p-i+1}\beta$ for all $1\leq i\leq s$.

Conversely, suppose $\textbf{Ker}~\alpha$ and $\textbf{Ker}~\beta$ have a maximum admissible finer partitions,  $\textbf{Ker}~\gamma_{1}$ and $\textbf{Ker}~\gamma_{2}$ (for some $\gamma_{1}$ and $\gamma_{2}$ in $\mathcal{CP}_{n}$) respectively. Further, let  $A_{\alpha}=\{\tau_{1}, \ldots,\tau_{s}\}$ and $B_{\alpha}=\{\sigma_{1}, \ldots,\sigma_{s}\}$ be  the admissible transversals of $\textbf{Ker}~\gamma_{1}$ and $\textbf{Ker}~\gamma_{2}$, respectively, such that there exists either a translation $\tau_{i}\mapsto \sigma_{i}$ and $\tau_{i}\alpha= \sigma_{i}\beta$ or a reflection $\tau_{i}\mapsto \sigma_{s-i+1}$ and $\tau_{i}\alpha= \sigma_{s-i+1}\beta$ for all $i=1,\ldots,s$ ($s\geq p$)

If in the former,  $\tau_{i}\mapsto \sigma_{i}$ is  a translation and $\tau_{i}\alpha= \sigma_{i}\beta$   ($i=1,\ldots,s$), then  define $\gamma_{1}=\left( \begin{array}{cccc}
                           A_{1}^{'} & A_{2}^{'} & \ldots & A_{s}^{'} \\
                           \sigma_{1} &\sigma_{2} & \ldots & \sigma_{s}
                         \end{array}
   \right)$  and $\gamma_{2}=\left( \begin{array}{cccc}
                           B_{1}^{'} & B_{2}^{'} & \ldots & B_{s}^{'} \\
                           \tau_{1} &\tau_{2} & \ldots & \tau_{s}
                         \end{array}
   \right)$. Then $\gamma_{1}$ and $\gamma_{2}$ are contractions since for all $a_{i}^{'}\in A_{i}^{'}$, $a_{j}^{'}\in A_{j}^{'}$ ($i,j\in\{1,\ldots,s\}$) $$\left|a_{i}^{\prime}\gamma_{1}-a_{j}^{\prime}\gamma_{1}\right|=\left|\sigma_{i}-\sigma_{j}\right|=\left|\tau_{i}-\tau_{j}\right|\leq\left|a_{i}^{'}-a_{j}^{'}\right|$$ and
 for all $b_{i}^{'}\in B_{i}^{'}$, $b_{j}^{'}\in B_{j}^{'}$ ($i,j\in\{1,\ldots,s\}$) $$\left|b_{i}^{\prime}\gamma_{2}-b_{j}^{\prime}\gamma_{2}\right|=\left|\tau_{i}-\tau_{j}\right|=\left|\sigma_{i}-\sigma_{j}\right|\leq\left|b_{i}^{'}-b_{j}^{'}\right|.$$

In the later, Suppose that there exists  a reflection $\tau_{i}\mapsto \sigma_{s-i+1}$ and $\tau_{i}\alpha= \sigma_{s-i+1}\beta$ for all $i=1,\ldots,s.$

 Define $\gamma_{1}=\left( \begin{array}{cccc}
                           A_{1}^{'} & A_{2}^{'} & \ldots & A_{s}^{'} \\
                           \sigma_{s} &\sigma_{s-1} & \ldots & \sigma_{1}
                         \end{array}
   \right)$  and $\gamma_{2}=\left( \begin{array}{cccc}
                           B_{1}^{'} & B_{2}^{'} & \ldots & B_{s}^{'} \\
                           \tau_{s} &\tau_{s-1} & \ldots & \tau_{1}
                         \end{array}
   \right)$. Then $\gamma_{1}$ and $\gamma_{2}$ are contractions since for all $a_{i}^{'}\in A_{i}^{'}$, $a_{j}^{'}\in A_{j}^{'}$ ($i,j\in\{1,\ldots,s\}$) $$\left|a_{i}^{\prime}\gamma_{1}-a_{j}^{\prime}\gamma_{1}\right|=\left|\sigma_{s-i+1}-\sigma_{s-j+1}\right|\leq\left|\tau_{i}-\tau_{j}\right|\leq\left|a_{i}^{'}-a_{j}^{'}\right|$$ and
 for all $b_{i}^{'}\in B_{i}^{'}$, $b_{j}^{'}\in B_{j}^{'}$ ($i,j\in\{1,\ldots,s\}$) $$\left|b_{i}^{\prime}\gamma_{2}-b_{j}^{\prime}\gamma_{2}\right|=\left|\tau_{s-i+1}-\tau_{s-j+1}\right|\leq\left|\sigma_{i}-\sigma_{j}\right|\leq\left|b_{i}^{'}-b_{j}^{'}\right|.$$

Now by direct computations, it follows easily that $ \alpha=\gamma_{1}\beta$ and $\beta=\gamma_{2}\alpha$. Thus $(\alpha,\beta)\in \mathcal{L}$. This complete the proof.

\end{proof}
\begin{lemma}\label{v33} Let $S=\mathcal{P}_{n}$ and let $\alpha,\beta,\gamma\in S$ such that $|\im~\alpha|=|\im~\beta|$. If $\alpha=\beta\gamma$ there exists $\gamma^{'}\in S$ such that $|\im~\gamma^{'}|=|\im~\alpha|$ and $\alpha=\beta\gamma^{'}$
\end{lemma}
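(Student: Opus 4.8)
The plan is to exploit the cardinality hypothesis $|\im\alpha|=|\im\beta|$ to pin down exactly how $\gamma$ behaves on $\im\beta$, and then to take $\gamma'$ to be the restriction of $\gamma$ to $\im\beta$. Throughout I use the convention $x(\beta\gamma)=(x\beta)\gamma$ fixed in the preliminaries.

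First I would express $\im\alpha$ in terms of $\beta$ and $\gamma$. Since $\alpha=\beta\gamma$, its domain is $\dom\alpha=\{x\in\dom\beta:x\beta\in\dom\gamma\}$, whence $(\dom\alpha)\beta=\im\beta\cap\dom\gamma$ and therefore
\[
\im\alpha=(\dom\alpha)\beta\gamma=(\im\beta\cap\dom\gamma)\gamma .
\]
This gives the chain $|\im\alpha|=|(\im\beta\cap\dom\gamma)\gamma|\leq|\im\beta\cap\dom\gamma|\leq|\im\beta|$.

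The key step is to force all these inequalities to be equalities. By hypothesis $|\im\alpha|=|\im\beta|$, so both inequalities above collapse. From $|\im\beta\cap\dom\gamma|=|\im\beta|$, together with $\im\beta\cap\dom\gamma\subseteq\im\beta$ and the finiteness of $[n]$, I conclude $\im\beta\subseteq\dom\gamma$; and from $|(\im\beta)\gamma|=|\im\beta|$ I conclude that $\gamma$ restricted to $\im\beta$ is injective. Note that $\im\beta\subseteq\dom\gamma$ also forces $\dom\alpha=\dom\beta$.

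Finally I would set $\gamma'=\gamma|_{\im\beta}$, the restriction of $\gamma$ to $\im\beta$, so that $\dom\gamma'=\im\beta$ and $\gamma'$ agrees with $\gamma$ there. Injectivity of $\gamma$ on $\im\beta$ yields $|\im\gamma'|=|\im\beta|=|\im\alpha|$, as required (in fact $\im\gamma'=\im\alpha$). To verify $\alpha=\beta\gamma'$, observe that for every $x\in\dom\beta=\dom\alpha$ we have $x\beta\in\im\beta=\dom\gamma'$, so $x(\beta\gamma')=(x\beta)\gamma'=(x\beta)\gamma=x\alpha$. I expect no serious obstacle: the only mildly delicate point is reading off both $\im\beta\subseteq\dom\gamma$ and the injectivity of $\gamma$ on $\im\beta$ from the single assumption $|\im\alpha|=|\im\beta|$, which rests on the elementary fact that for finite sets a subset of the same cardinality as the whole set must equal it. The remaining verification of $\alpha=\beta\gamma'$ is routine.
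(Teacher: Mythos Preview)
Your proof is correct. You restrict the \emph{domain} of $\gamma$ to $\im\beta$, using the hypothesis $|\im\alpha|=|\im\beta|$ to force $\im\beta\subseteq\dom\gamma$ and injectivity of $\gamma$ on $\im\beta$, and then verify directly that $\gamma'=\gamma|_{\im\beta}$ works. The paper instead restricts the \emph{range}: it takes $\gamma'=\gamma\cdot\mathrm{id}_{\im\alpha}$, so that $\im\gamma'\subseteq\im\alpha$, and the containment $\im\alpha\subseteq\im\gamma'$ and the equality $\beta\gamma'=\alpha$ are immediate from $\alpha=\beta\gamma$. A notable difference is that the paper's construction never actually uses the hypothesis $|\im\alpha|=|\im\beta|$; it delivers a $\gamma'$ with $\im\gamma'=\im\alpha$ for any factorization $\alpha=\beta\gamma$. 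Your argument, by contrast, genuinely relies on that hypothesis to squeeze the chain of inequalities into equalities. What your route buys is the additional structural information that $\gamma$ is a bijection from $\im\beta$ onto $\im\alpha$, which is morally close to how the lemma is applied in the proof of the $\mathcal{R}$-characterization; what the paper's route buys is brevity and a slightly stronger statement.
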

\begin{proof} Take $\gamma^{'}=\gamma id_{\im~\alpha}$, then $|\im~\gamma^{'}|=|\im~\alpha|$ and it is easy to see that  $\alpha=\beta\gamma^{'}$.
\end{proof}
Next let  $\alpha, \beta\in\mathcal{CP}_{n}$ be as expressed in \eqref{2}. Then we have the following:

\begin{theorem}\label{15} Let $\alpha, \beta\in \mathcal{CP}_{n}$. Then $(\alpha, \beta)\in \mathcal{R}$ if and only if $\ker~\alpha=\ker~\beta$ and there exists either a translation $x_{i}\mapsto y_{i}$ or a reflection $x_{i}\mapsto y_{p-i+1}$ ($1\leq i\leq p$).
 \end{theorem}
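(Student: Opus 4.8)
The plan is to unwind the definition $\alpha\,\mathcal{R}\,\beta \iff \alpha(\mathcal{CP}_n)^1=\beta(\mathcal{CP}_n)^1$, i.e. the existence of $\gamma_1,\gamma_2\in(\mathcal{CP}_n)^1$ with $\beta=\alpha\gamma_1$ and $\alpha=\beta\gamma_2$, and to read off exactly what these two equations force. Since $\mathcal{CP}_n$ contains the identity map, $(\mathcal{CP}_n)^1=\mathcal{CP}_n$, so $\gamma_1,\gamma_2$ may be taken in $\mathcal{CP}_n$ itself. The proof then splits into extracting the kernel condition, upgrading it to an isometry condition on the images, and finally reversing the construction for the converse.

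For the forward direction I would first extract the kernel equality. From $\beta=\alpha\gamma_1$ one gets $\dom\beta\subseteq\dom\alpha$ and $\ker\alpha\subseteq\ker\beta$ (if $a\alpha=a'\alpha$ then $a\beta=a\alpha\gamma_1=a'\alpha\gamma_1=a'\beta$); symmetrically $\alpha=\beta\gamma_2$ gives the reverse inclusions. Hence $\dom\alpha=\dom\beta$ and $\ker\alpha=\ker\beta$, so the two partitions coincide and, with the $\prec$-ordering, $A_i=B_i$ for every $i$. Next I would pin down the action on the images: for $a\in A_i$ the equation $\beta=\alpha\gamma_1$ gives $y_i=a\beta=(a\alpha)\gamma_1=x_i\gamma_1$, so $\gamma_1$ carries $x_i\mapsto y_i$, and symmetrically $\gamma_2$ carries $y_i\mapsto x_i$. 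Because $\gamma_1,\gamma_2$ are contractions, $|y_i-y_j|=|x_i\gamma_1-x_j\gamma_1|\le|x_i-x_j|$ and $|x_i-x_j|=|y_i\gamma_2-y_j\gamma_2|\le|y_i-y_j|$ for all $i,j$; the two inequalities force $|x_i-x_j|=|y_i-y_j|$. Thus the bijection $x_i\mapsto y_i$ of $\im\alpha$ onto $\im\beta$ is an isometry, and by the description of isometries recorded in Section 1 every such map is a translation or a reflection: the translation case yields $y_i=x_i+e$ (the pairing $x_i\mapsto y_i$), while the reflection case is the order-reversing isometry, which in the stated indexing is the pairing $x_i\mapsto y_{p-i+1}$.

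For the converse I would simply reverse this construction. Given $\ker\alpha=\ker\beta$ (so $A_i=B_i$) together with a translation or reflection relating the $x_i$ and $y_i$, define $\gamma_1$ on $\im\alpha$ by $x_i\gamma_1=y_i$ and $\gamma_2$ on $\im\beta$ by $y_i\gamma_2=x_i$. The translation/reflection hypothesis is precisely the statement that these maps are isometries, hence contractions, so $\gamma_1,\gamma_2\in\mathcal{CP}_n$; a one-line check gives $a\alpha\gamma_1=x_i\gamma_1=y_i=a\beta$ for $a\in A_i$, i.e. $\beta=\alpha\gamma_1$, and symmetrically $\alpha=\beta\gamma_2$, whence $(\alpha,\beta)\in\mathcal{R}$.

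The conceptual heart — and the step where the ambient semigroup genuinely matters — is the forward implication that $\gamma_1,\gamma_2$ being contractions forces $x_i\mapsto y_i$ to be an isometry: in $\mathcal{P}_n$ the condition $\ker\alpha=\ker\beta$ alone characterises $\mathcal{R}$, and it is only the contraction constraint on the connecting elements that upgrades ``same kernel'' to ``same kernel plus isometric images.'' The remaining delicate point is purely bookkeeping, namely translating ``the bijection $x_i\mapsto y_i$ is an order-reversing isometry'' into the exact index pairing $x_i\mapsto y_{p-i+1}$ of the statement, which becomes immediate once the $x_i$ (equivalently the $y_i$) are listed in increasing order.
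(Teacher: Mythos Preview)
Your proposal is correct and follows essentially the same route as the paper's proof: extract $\ker\alpha=\ker\beta$ from $\beta=\alpha\gamma_1$ and $\alpha=\beta\gamma_2$, then use the two contraction inequalities $|y_i-y_j|\le|x_i-x_j|$ and $|x_i-x_j|\le|y_i-y_j|$ to force the bijection $x_i\mapsto y_i$ to be an isometry, and reverse the construction for the converse. Your argument is in fact slightly cleaner than the paper's, since you read off $x_i\gamma_1=y_i$ directly from $\beta=\alpha\gamma_1$ without first invoking Lemma~\ref{v33} to normalise the height of $\gamma_1,\gamma_2$; that lemma is not needed once one observes that only the restriction of $\gamma_1$ to $\im\alpha$ matters.
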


\begin{proof}
Suppose $(\alpha, \beta)\in \mathcal{R}$, then there exist $\gamma_{1}$, $\gamma_{2}\in (\mathcal{CP}_{n})^{1}$ such that
\begin{equation}\label{125}\alpha=\beta\gamma_{1} \ and \ \beta=\alpha\gamma_{2}.\end{equation} Thus, $\ker~\alpha=\ker~\beta$ follows easily.  Since $\alpha$ and $\beta$ have the same height, then by Lemma\eqref{v33} we can take $\gamma_{1}$ and $\gamma_{2}$ of  the same height as the height of $\alpha$ and $\beta$, and by  \eqref{125} $\im~\beta$ must be a transversal of $\textbf{Ker}\gamma_{1}$ and   $\im~\alpha$ must be a transversal of $\textbf{Ker}\gamma_{2}$.   Let $\gamma_{1}=\left(\begin{array}{cccc}
                                                                   C_{1} & C_{2} & \ldots & C_{p} \\
                                                                   x_{1} & x_{2} & \ldots & x_{p}
                                                                 \end{array}
 \right)$ and $\gamma_{2}=\left(\begin{array}{cccc}
                                                                   D_{1} & D_{2} & \ldots & D_{p} \\
                                                                   y_{1} & y_{2} & \ldots & y_{p}
                                                                 \end{array}
 \right)$ $(1\leq p\leq n)$. Thus $y_{t}\in C_{t}$ and $x_{t}\in D_{t}$ for all $1\leq t\leq p$. Since $\gamma_{1}$ and $\gamma_{2}$ are contractions, for all $i,j\in\{1,2,\ldots,p\}$ with $i<j$,  \begin{equation}\label{126} \left|x_{i}-x_{j}\right|=\left|C_{i}\gamma_{1}-C_{j}\gamma_{1}\right|=\left|y_{i}\gamma_{1}-y_{j}\gamma_{1}\right|\leq\left|y_{i}-y_{j}\right| \ \end{equation} and
\begin{equation}\label{127} \left|y_{i}-y_{j}\right|=\left|D_{i}\gamma_{2}-D_{j}\gamma_{2}\right|=\left|x_{i}\gamma_{2}-x_{j}\gamma_{2}\right|\leq\left|x_{i}-x_{j}\right|. \end{equation}
Thus from \eqref{126} and \eqref{127} we have $\left|x_{i}-x_{j}\right|=\left|y_{i}-y_{j}\right|$. This implies that there exists a translation $x_{i}\mapsto y_{i}$ or a reflection $x_{i}\mapsto y_{p-i+1}$ for all $i\in\{1,2,\ldots,p\}$.

Conversely, suppose that $\ker~\alpha=\ker~\beta$ and there exists an isometry from $\im~\alpha$ to $\im~\beta$. This implies that there exists either a translation $x_{i}\mapsto y_{i}$ or a reflection $x_{i}\mapsto y_{p-i+1}$ for all $i\in\{1,2,\ldots,p\}$.
If the map  is a translation, define $\gamma$ from $\im~\alpha$ to $\im~\beta$ by $ x_{i}\gamma=y_{i}$; and if it's a reflection, define $\gamma$ by $ x_{i}\gamma=y_{p-i+1}$ ($1\leq i\leq p$). In each case, it is easy to see that $\gamma$ is an isometry.

Now suppose that $x_{i}\gamma=y_{i}$. Let  $a \in A_{i}$ ($1\leq i \leq p$), then $a\alpha=x_{i}$ implies $a\alpha\gamma=x_{i}\gamma=y_{i}=a\beta$, as such $\alpha=\beta\gamma$.

Similarly, suppose that $x_{i}\gamma=y_{p-i+1}.$ Let  $a \in A_{i}$ ($1\leq i \leq p$), then $a\alpha=x_{i}$ implies $a\alpha\gamma=x_{i}\gamma=y_{p-i+1}=a\beta$, as such $\alpha=\beta\gamma$. Moreover, since $\gamma$ is an isometry its inverse exists, and therefore $\beta=\alpha\gamma_{1}^{-1}$. Hence $\alpha \mathcal{R} \beta$, as required.
 \end{proof}

\begin{theorem}\label{last} Let $\alpha, \beta\in \mathcal{CP}_{n}$ be as expressed in \eqref{2}. Then $(\alpha,\beta)\in \mathcal{D}$ if and only if there exist isometries $\vartheta_{1}$ from $\textbf{Ker}~\gamma_{1}$ to $\textbf{Ker}~\gamma_{2}$ and $\vartheta_{2}$ from $\im~\alpha$ to $\im~\beta$, where $\textbf{Ker}~\gamma_{1}$ and $\textbf{Ker}~\gamma_{2}$ (for some $\gamma_{1}$ and $\gamma_{2}$ in $\mathcal{CP}_{n}$) are maximum admissible finer partitions of  $\textbf{Ker}~\alpha$ and $\textbf{Ker}~\beta$, respectively.

\end{theorem}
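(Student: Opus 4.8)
The plan is to prove the characterization of Green's $\mathcal{D}$-relation by combining the descriptions of $\mathcal{L}$ and $\mathcal{R}$ already established in Theorems~\ref{14} and~\ref{15}, using the standard fact that $\mathcal{D}=\mathcal{L}\circ\mathcal{R}=\mathcal{R}\circ\mathcal{L}$. For the forward direction, I would assume $(\alpha,\beta)\in\mathcal{D}$, so there exists an intermediate element $\mu\in\mathcal{CP}_{n}$ with $(\alpha,\mu)\in\mathcal{R}$ and $(\mu,\beta)\in\mathcal{L}$. Applying Theorem~\ref{15} to $(\alpha,\mu)$ yields $\ker\alpha=\ker\mu$ together with an isometry (a translation or reflection) from $\im\alpha$ to $\im\mu$; applying Theorem~\ref{14} to $(\mu,\beta)$ yields admissible finer partitions $\textbf{Ker}~\gamma_{1}$ of $\textbf{Ker}~\mu=\textbf{Ker}~\alpha$ and $\textbf{Ker}~\gamma_{2}$ of $\textbf{Ker}~\beta$, with an isometry (translation or reflection) between their admissible transversals. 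I would then assemble $\vartheta_{1}$ as the isometry on the transversals coming from Theorem~\ref{14} and $\vartheta_{2}$ as the composite of the isometry $\im\alpha\to\im\mu$ from Theorem~\ref{15} with the image-matching data $\tau_{i}\mu=\sigma_{i}\beta$ supplied by Theorem~\ref{14}, so that $\vartheta_{2}$ becomes an isometry $\im\alpha\to\im\beta$.

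For the converse, I would reverse the construction. Given the isometries $\vartheta_{1}$ from $\textbf{Ker}~\gamma_{1}$ to $\textbf{Ker}~\gamma_{2}$ (i.e.\ between the admissible transversals $A_{\alpha}$ and $B_{\beta}$) and $\vartheta_{2}$ from $\im\alpha$ to $\im\beta$, I would construct an explicit witness $\mu\in\mathcal{CP}_{n}$. The natural choice is to build $\mu$ so that $\textbf{Ker}~\mu=\textbf{Ker}~\alpha$ (forcing $(\alpha,\mu)\in\mathcal{R}$ once the image of $\mu$ is chosen compatibly with $\vartheta_{2}$) while simultaneously arranging, via $\vartheta_{1}$, that $\mu$ and $\beta$ satisfy the translation/reflection condition of Theorem~\ref{14} on their admissible transversals (forcing $(\mu,\beta)\in\mathcal{L}$). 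Concretely, I would let $\im\mu=\im\beta$ and define $\mu$ on each block $A_{i}$ of $\textbf{Ker}~\alpha$ by sending it to the point of $\im\beta$ dictated by pushing $\im\alpha$ through $\vartheta_{2}$; the isometry property of $\vartheta_{2}$ guarantees $\mu\in\mathcal{CP}_{n}$ and that $(\alpha,\mu)\in\mathcal{R}$ by Theorem~\ref{15}, while the isometry $\vartheta_{1}$ on the transversals, together with the matching of images, guarantees $(\mu,\beta)\in\mathcal{L}$ by Theorem~\ref{14}. Verifying that $\mu$ is genuinely a contraction with the prescribed kernel and image requires checking the inequalities $|x_{i}-x_{j}|\le|a_{i}-a_{j}|$ via Lemma~\ref{con}, which follows since $\vartheta_{2}$ preserves distances on $\im\alpha$ and $\textbf{Ker}~\mu=\textbf{Ker}~\alpha$.

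The main obstacle I anticipate is the bookkeeping required to reconcile the two separate isometries $\vartheta_{1}$ and $\vartheta_{2}$ into a single coherent intermediate element $\mu$, in particular ensuring that the translation/reflection type of $\vartheta_{1}$ on the transversals is compatible with the translation/reflection type of $\vartheta_{2}$ on the images. Because Theorem~\ref{14} allows either a translation or a reflection between transversals, and Theorem~\ref{15} independently allows either type between images, one must carefully track indices (the $i\mapsto i$ versus $i\mapsto s-i+1$ relabelling) so that the composite $\mu$ lands in $\mathcal{CP}_{n}$ and the equations $\tau_{i}\mu=\sigma_{i}\beta$ (or their reflected analogues) hold for all $i$. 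I expect the cleanest route is to treat the translation and reflection cases in parallel, as was done in the proof of Theorem~\ref{14}, and to invoke Lemma~\ref{v33} to adjust heights so that the witnesses have the correct image size throughout.
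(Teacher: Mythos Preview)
Your proposal is correct and follows essentially the same strategy as the paper: combine Theorems~\ref{14} and~\ref{15} via an intermediate element, with the only difference being that you decompose $\mathcal{D}$ as $\mathcal{R}\circ\mathcal{L}$ (intermediate $\mu$ with $\textbf{Ker}~\mu=\textbf{Ker}~\alpha$ and $\im\mu=\im\beta$) whereas the paper uses $\mathcal{L}\circ\mathcal{R}$ (intermediate $\gamma$ with $\textbf{Ker}~\gamma=\textbf{Ker}~\beta$ and $\im\gamma=\im\alpha$). The paper's converse is terser than yours --- it simply writes down $\gamma$ and asserts that $\alpha\mathcal{L}\gamma$ and $\gamma\mathcal{R}\beta$ ``easily follow'' --- so the compatibility bookkeeping you flag as an obstacle is handled there at the same informal level you propose.
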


\begin{proof}

Let $\alpha, \beta\in \mathcal{CP}_{n}$ ($1\leq p\leq n$) be as expressed in \eqref{2}.

Suppose $(\alpha, \beta)\in \mathcal{D}$.  That is to say there exists $\eta\in (\mathcal{CP}_{n})^{1}$ such that $\alpha \mathcal{L} \eta$ and $\eta \mathcal{R} \beta$. Thus, by Theorem\eqref{14}, $\alpha \mathcal{L} \eta$  implies  that there exists an isometry from the refined partition $\textbf{Ker}~\gamma_{1}$ of $\textbf{Ker}~\alpha$ to the refined partition $\textbf{Ker}~\gamma_{2}$ (for some $\gamma_{1}, \gamma_{2}\in \mathcal{CP}_{n}$) of $\textbf{Ker}~\eta$ and $\tau_{i}\alpha=\delta_{i}\eta$ or $\tau_{i}\alpha=\delta_{s-i+1}\eta$ with $\tau_{i}\in A_{\alpha}$ and $\delta_{i}\in C_{\eta}$ (where $A_{\alpha}, C_{\eta}$ denote the  admissible transversals of the maximum finer partitions $\textbf{Ker}~\gamma_{1}$ and $\textbf{Ker}~\gamma_{2}$, respectively). This implies that $\im~\alpha=\im~\eta.$
Furthermore, by Theorem\eqref{15} $\eta \mathcal{R} \beta$  implies $\ker~\eta=\ker~\beta$, i. e.,  $\textbf{Ker}~\eta=\textbf{Ker}~\beta$ and there exists an isometry from $\im~\eta$ to $\im~\beta$.  Now since $\textbf{Ker}~\eta=\textbf{Ker}~\beta$ it means that $\textbf{Ker}~\gamma_{2}$ is the maximum admissible refined partition of $\textbf{Ker}~\beta$. Hence there exists an isometry from $\textbf{Ker}~\gamma_{1}$ to $\textbf{Ker}~\gamma_{2}$.
Note also that, $\im~\alpha=\im~\eta$ and recall that there exists an isometry from $\im~\eta$ to $\im~\beta$, this implies that there exists an isometry from $\im~\alpha$ to $\im~\beta$.

Conversely, suppose there exists an isometry $\vartheta_{1}$ from $\textbf{Ker}~\gamma_{1}$ to $\textbf{Ker}~\gamma_{2}$ and also there exists an isometry $\vartheta_{2}$ from $\im~\alpha$ to $\im~\beta$. If $\vartheta_{2}$ is a reflection, i. e., $x_{i}\vartheta_{2}=y_{p-i+1}$ for all $1\leq i\leq p$, then define a map say $\gamma$ as: $$\gamma=\left( \begin{array}{cccc}
                           B_{1} & B_{2} & \ldots & B_{p} \\
                           x_{p} & x_{p-1} & \ldots & x_{1}
                         \end{array}
   \right).$$ Then  $\gamma$ is a contraction and it easily follows from Theorem\eqref{14} and \eqref{15} that  $\alpha \mathcal{L} \gamma$ and $\gamma \mathcal{R} \beta$. Hence $(\alpha, \beta)\in \mathcal{D}$.

If  $\vartheta_{2}$ is a translation, i. e., $x_{i}\vartheta_{2}=y_{i}$ for all $1\leq i\leq p$, then define a map say $\gamma$ as  $\gamma=\left( \begin{array}{cccc}
                           B_{1} & B_{2} & \ldots & B_{p} \\
                           x_{1} & x_{2} & \ldots & x_{p}
                         \end{array}
   \right).$ Then it is easy to see that $\gamma$ is a contraction and it  follows from Theorem\eqref{14} and \eqref{15} that  $\alpha \mathcal{L} \gamma$ and $\gamma \mathcal{R} \beta$. Hence $(\alpha, \beta)\in \mathcal{D}$.
\end{proof}

Let $\alpha$ and $\beta$ be regular elements  in $\mathcal{CP}_{n}$ and be as expressed in \eqref{2}. Then as a consequence of Theorem\eqref{14}, \eqref{15}, \eqref{last} and Remark\eqref{re3} we have:
\begin{corollary}\label{gr} Let $\alpha,\beta\in \mathcal{CP}_{n}$ be regular elements.
\begin{itemize}
\item[(i)] $(\alpha, \beta)\in \mathcal{L}$ if and only $\im~\alpha=\im~\beta$.
\item[(ii)] $(\alpha, \beta)\in \mathcal{R}$ if and only $\textbf{Ker}~\alpha=\textbf{Ker}~\beta$.
\item[(iii)] $(\alpha, \beta)\in \mathcal{D}$ if and only $x_{i}=y_{i}+e$ or  $x_{i}=y_{p-i+1}+e$  ($i=1,2,\ldots, p$) for some $e \in \mathbb{Z}$.
\end{itemize}
\end{corollary}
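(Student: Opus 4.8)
The plan is to obtain all three parts by specializing the general characterizations in Theorems~\ref{14}, \ref{15} and \ref{last} to regular elements, using Remark~\ref{re3} to replace the auxiliary refinements $\textbf{Ker}~\gamma_{1},\textbf{Ker}~\gamma_{2}$ by $\textbf{Ker}~\alpha,\textbf{Ker}~\beta$ themselves. Since $\alpha$ and $\beta$ are regular, by Theorem~\ref{reeg} and Remark~\ref{re3} each of $\textbf{Ker}~\alpha,\textbf{Ker}~\beta$ is its own maximum admissible finer partition, with \emph{good} transversals $T_{\alpha}=\{\tau_{1},\ldots,\tau_{p}\}$ and $T_{\beta}=\{\sigma_{1},\ldots,\sigma_{p}\}$ on which $\alpha$ and $\beta$ restrict to isometries onto $\im~\alpha$ and $\im~\beta$; in particular $s=p$ throughout. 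The one structural fact I would isolate first is that the pairwise distances of an admissible transversal depend only on the partition: admissibility gives $|t_{i}-t_{j}|\le\min\{|a-b|:a\in A_{i},b\in A_{j}\}$, while $t_{i}\in A_{i},t_{j}\in A_{j}$ forces the reverse inequality, so $|t_{i}-t_{j}|=\min\{|a-b|:a\in A_{i},b\in A_{j}\}=:d_{ij}$. Hence for a regular element the distance profile of its image equals $(d_{ij})$, a quantity determined by its kernel alone.

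For (i), the forward direction is immediate from Theorem~\ref{14}: with $s=p$ the condition $\tau_{i}\alpha=\sigma_{i}\beta$ (or its reflected form) gives $\im~\alpha=\{\tau_{i}\alpha\}=\{\sigma_{i}\beta\}=\im~\beta$. Conversely, if $\im~\alpha=\im~\beta$ then composing $\alpha|_{T_{\alpha}}$ with the inverse of $\beta|_{T_{\beta}}$ yields a distance-preserving bijection $T_{\alpha}\to T_{\beta}$; being an isometry of a finite subchain it is a translation or a reflection, and by construction it matches image points, so $\tau_{i}\alpha=\sigma_{\pi(i)}\beta$. This is exactly the hypothesis of Theorem~\ref{14}, giving $(\alpha,\beta)\in\mathcal{L}$.

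For (ii), the forward direction is part of Theorem~\ref{15}. For the converse, $\textbf{Ker}~\alpha=\textbf{Ker}~\beta$ means $\alpha$ and $\beta$ share the same blocks, so by the distance fact both $\im~\alpha$ and $\im~\beta$ carry the common profile $(d_{ij})$; hence $x_{i}\mapsto y_{i}$ is distance-preserving, i.e.\ a translation or reflection, and Theorem~\ref{15} yields $(\alpha,\beta)\in\mathcal{R}$. For (iii), by Theorem~\ref{last} it suffices to show that for regular $\alpha,\beta$ the existence of the image isometry $\vartheta_{2}$ is equivalent to the stated condition and automatically supplies the kernel isometry $\vartheta_{1}$. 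The condition $x_{i}=y_{i}+e$ or $x_{i}=y_{p-i+1}+e$ is precisely the assertion that $\vartheta_{2}\colon\im~\alpha\to\im~\beta$ is a translation or reflection; given such a $\vartheta_{2}$, the composite $T_{\alpha}\xrightarrow{\alpha}\im~\alpha\xrightarrow{\vartheta_{2}}\im~\beta\xrightarrow{\beta^{-1}}T_{\beta}$ is an isometry of the good transversals, i.e.\ the required $\vartheta_{1}$ from $\textbf{Ker}~\alpha$ to $\textbf{Ker}~\beta$. Thus both hypotheses of Theorem~\ref{last} hold and $(\alpha,\beta)\in\mathcal{D}$; the forward direction merely reads $\vartheta_{2}$ off from Theorem~\ref{last}.

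The main obstacle is not length but pinning down the two recurring principles that make the statements so clean: the distance fact that an admissible transversal realizes the inter-block minimal distances $d_{ij}$ (so that a regular element's image profile is a kernel invariant), and the elementary observation that a distance-preserving bijection between finite subchains is a translation or a reflection. Together these are what allow the image conditions in (ii) and (iii) to be phrased with no mention of transversals, and what make $\vartheta_{1}$ come for free once $\vartheta_{2}$ is present.
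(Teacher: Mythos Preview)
Your proposal is correct and follows essentially the same route the paper indicates: the paper gives no explicit proof of this corollary, merely recording it ``as a consequence of Theorem~\ref{14}, \ref{15}, \ref{last} and Remark~\ref{re3}'', and your argument is precisely the natural way to unpack that sentence---using Remark~\ref{re3} to take $s=p$ and replace the refined partitions by $\textbf{Ker}~\alpha,\textbf{Ker}~\beta$, then reading off each part from the corresponding theorem. Your explicit isolation of the ``distance fact'' $|t_i-t_j|=\min\{|a-b|:a\in A_i,\,b\in A_j\}$ for admissible transversals is a clean observation that the paper does not state separately; it is exactly what is needed to make the converse directions of (ii) and (iii) transparent, since it shows that for a regular element the metric profile of the image is a kernel invariant, and hence that the required image isometry in Theorem~\ref{15} (and the kernel isometry $\vartheta_1$ in Theorem~\ref{last}) come automatically.
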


\section{Semigroup of order reversing partial contractions}
We recall that  a map $\alpha\in \mathcal{P}_{n}$ is said to be \emph{order preserving}  if  (for all $x,y \in \dom~\alpha$) $x\leq y$ implies $x\alpha\leq y\alpha$. The collection of all order preserving contractions of a finite chain $[n]$ is denoted by $\mathcal{OCP}_{n}=\{\alpha\in \mathcal{CP}_{n}: (for ~all~x,y\in \dom~\alpha)~x\leq y ~ implies ~ x\alpha\leq y\alpha\}$ and is a subsemigroup of $\mathcal{CP}_{n}$. In 2013, Zhao and Yang \cite{py} studied this semigroup, where they referred to our ``contractions" as ``compressions" and they characterized the Green's equivalences and gave a necessary and sufficient condition for an element to be regular. In this section, we deduce the regularity and Green's relations characterizations of this semigroup from the results already obtained for the larger semigroup $\mathcal{CP}_{n}$. However, before we do that, we establish the following crucial lemma.

\begin{lemma}
Let $\alpha=\left( \begin{array}{cccc}
                           A_{1} & A_{2} & \ldots & A_{p} \\
                           x_{1} & x_{2} & \ldots & x_{p}
                         \end{array}
   \right)  \in \mathcal{ORCP}_{n}~(1\leq p\leq n)$. Then the following statements are equivalent:\begin{itemize}
                    \item[(i)] $\max A_{1}-x_{1}=\min A_{p}-x_{p}=d$ and $A_{i}=\{x_{i}+d\}$ ($i=2,\ldots,p-1$), or $\max A_{1}-x_{p}=\min A_{p}-x_{1}=d$ and $A_{i}=\{x_{p-i+1}-d\}$ ($i=2,\ldots,p-1$);
                    \item[(ii)] $\textbf{Ker}~\alpha$ has a good transversal.
                  \end{itemize}
\end{lemma}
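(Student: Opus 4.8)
The plan is to prove the two implications separately, after first pinning down the shape of any $\alpha\in\mathcal{ORCP}_{n}$. Since $\alpha$ is monotone (order preserving or order reversing), each kernel class is convex and the classes are linearly ordered, so we may assume $A_{1}<A_{2}<\cdots<A_{p}$ with $x_{1}<\cdots<x_{p}$ in the order preserving case and $x_{1}>\cdots>x_{p}$ in the order reversing case. Recall from Theorem~\ref{reeg} and the definition of a good transversal that (ii) is equivalent to $\alpha$ being regular, i.e.\ to the existence of an admissible transversal $T_{\alpha}=\{t_{1},\ldots,t_{p}\}$ (with $t_{i}\in A_{i}$, hence $t_{1}<\cdots<t_{p}$) for which the map $t_{i}\mapsto x_{i}$ is an isometry. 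This reformulation is what I would work with throughout.

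For (ii)$\Rightarrow$(i), suppose such a good transversal exists. First I would show every interior block is a singleton: if some $A_{k}$ with $2\le k\le p-1$ had $|A_{k}|\ge 2$, then by Corollary~\ref{p2} the partition $\textbf{Ker}~\alpha$ would have no relatively convex transversal, contradicting the fact (Remark~\ref{minn}(iii)) that every admissible transversal is relatively convex. Hence $A_{i}=\{a_{i}\}$ and $t_{i}=a_{i}$ for $2\le i\le p-1$. Next I would locate $t_{1}$ and $t_{p}$: admissibility applied to the extreme pair gives $t_{p}-t_{1}\le \min A_{p}-\max A_{1}$, while $t_{1}\le \max A_{1}<\min A_{p}\le t_{p}$ gives the reverse inequality $t_{p}-t_{1}\ge \min A_{p}-\max A_{1}$; equality then forces $t_{1}=\max A_{1}$ and $t_{p}=\min A_{p}$. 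Finally, the isometry $t_{i}\mapsto x_{i}$ makes $t_{i}-x_{i}$ constant in the order preserving case and $t_{i}+x_{i}$ constant in the order reversing case; substituting $t_{1}=\max A_{1}$, $t_{p}=\min A_{p}$ and $t_{i}=a_{i}$ then yields precisely the first (translation) or the second (reflection) alternative of (i).

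For (i)$\Rightarrow$(ii), I would simply exhibit a good transversal. Put $T_{\alpha}=\{\max A_{1},a_{2},\ldots,a_{p-1},\min A_{p}\}$; this is admissible by Lemma~\ref{p3}, since the interior blocks are singletons and the classes are ordered. The defining equations in (i) say exactly that the consecutive gaps of $T_{\alpha}$ equal the corresponding gaps of the $x_{i}$, so the monotone map $t_{i}\mapsto x_{i}$ preserves consecutive distances, hence all distances; that is, $t_{i}\mapsto x_{i}$ is an isometry. Thus $T_{\alpha}$ is good and, by Theorem~\ref{reeg}, $\alpha$ is regular, giving (ii).

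The main obstacle I anticipate is the extremal-selection step --- forcing $t_{1}=\max A_{1}$ and $t_{p}=\min A_{p}$ --- together with keeping the orientation bookkeeping straight, so that the order preserving case produces the translation alternative and the order reversing case the reflection alternative. The low-height cases $p\le 2$, where there are no interior blocks, also have to be checked separately, but they follow from the same telescoping inequality and are routine.
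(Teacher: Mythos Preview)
Your proposal is correct and follows essentially the same route as the paper: for (i)$\Rightarrow$(ii) you exhibit the transversal $T_{\alpha}=\{\max A_{1},a_{2},\ldots,a_{p-1},\min A_{p}\}$ and check it is good, and for (ii)$\Rightarrow$(i) you read off the numerical conditions from the isometry $t_{i}\mapsto x_{i}$ once the $t_{i}$ are pinned down. If anything, your write-up is more explicit than the paper's on two points the paper states without argument: that the interior blocks must be singletons (you invoke Corollary~\ref{p2} and Remark~\ref{minn}(iii), whereas the paper simply writes $A_{i}=\{x_{i}+d\}$), and that admissibility forces $t_{1}=\max A_{1}$, $t_{p}=\min A_{p}$ (your squeeze $t_{p}-t_{1}\le \min A_{p}-\max A_{1}\le t_{p}-t_{1}$ is exactly what is needed, while the paper just asserts these equalities).
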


\begin{proof} Suppose (i) holds. In the former, it means that $\textbf{Ker}~\alpha=\{A_{1}<\{x_{2}+d\}<\ldots<\{x_{p-1}+d\}< A_{p}\}$. Take $T_{\alpha}=\{\max A_{1}<x_{2}+d<\ldots<x_{p-1}+d<\min A_{p}\}$. Then clearly $T_{\alpha}$ is a relatively convex transversal of $\textbf{Ker}~\alpha$ and the map $\theta=\left( \begin{array}{ccccc}
                           A_{1} & x_{2}+d & \ldots &x_{p-1}+d & A_{p} \\
                           \max A_{1} & x_{2}+d & \ldots & x_{p-1}+d & \min A_{p}
                         \end{array}
   \right)$ is clearly a contraction. Thus, $T_{\alpha}$ is admissible. Next, define a map say $\gamma$ as; $$\gamma=\left( \begin{array}{ccccc}
                           \max A_{1} & x_{2}+d & \ldots &x_{p-1}+d & \min A_{p} \\
                            x_{1} & x_{2} & \ldots & x_{p-1} & x_{p}
                         \end{array}
   \right).$$

   Clearly, $\gamma$ is an isometry since $\gamma=\alpha|_{T_{\alpha}}$ and $d=\max A_{1}-x_{1}=(x_{i}+d)-x_{i}=\min A_{p}-x_{p}$ ($i=2,\ldots,p-1$). Hence $T_{\alpha}$ is good.

 In the latter,  $\textbf{Ker}~\alpha=\{ A_{p}<\{x_{p-1}-d\}<\{x_{p-2}-d\}<\ldots<\{x_{2}-d\}< A_{1}\}$. Take $T_{\alpha}=\{\max A_{1}<x_{p-1}-d<x_{p-2}-d<\ldots<x_{2}-d<\min A_{p}\}$. Then clearly $T_{\alpha}$ is a relatively convex transversal of $\textbf{Ker}~\alpha$ and the map $\theta'=\left( \begin{array}{ccccc}
                           A_{p} & x_{p-1}-d & \ldots &x_{2}-d & A_{1} \\
                           \max A_{1} & x_{p-1}-d & \ldots & x_{2}-d & \min A_{p}
                         \end{array}
   \right)$ is  a contraction. Thus, $T_{\alpha}$ is admissible. Next, define a map say $\gamma'$ as $$\gamma'=\left( \begin{array}{ccccc}
                          \max A_{1} & x_{p-1}-d & \ldots &x_{2}-d &\min A_{p} \\
                           x_{1} & x_{2} & \ldots & x_{p-1} &  x_{p}
                         \end{array}
   \right).$$
Clearly,  $\gamma'$ is a reflection of  $\gamma$ which is also an isometry. Thus, $T_{\alpha}$ is good, as  required.

Conversely, suppose $T_{\alpha}$ is good. This means that $T_{\alpha}=\{t_{1}, t_{2}, \ldots, t_{p}\}$ is an admissible relatively convex transversal of $\textbf{Ker}\alpha$ with $1\leq\max A_{1}=t_{1} < t_{2}<\ldots < t_{p}=\min A_{p}\leq n$  and the map $t_{i}\mapsto x_{i}$ ($1\leq i \leq p$) is an isometry. If $1\leq x_{1}< x_{2}<\ldots < x_{p}\leq n$ then $\min A_{p}-\max A_{1}=\left|\min A_{p}-\max A_{1}\right|=\left|t_{p}-t_{1}\right|=\left|x_{p}-x_{1}\right|=x_{p}-x_{1}$ i. e., $\min A_{p}-\max A_{1}=x_{p}-x_{1}$ or $\min A_{p}-x_{p}=\max A_{1}-x_{1}=d.$

Notice that; $\left|t_{i}-t_{j}\right|=\left|x_{i}-x_{j}\right|$ ($i,j\in\{ 2,\ldots, p-1\}$). This means that if (without loss of generality) $i<j$ then $ t_{j}-t_{i}=x_{j}-x_{i}$ which implies  $ t_{j}-x_{j}=t_{i}-x_{i}=d$ if and only if $t_{i}=x_{i}+d$. Thus, $A_{i}=\{x_{i}+d\}$ $(2\leq i\leq p-1)$.
The same result is obtained if $ n \geq x_{1}> x_{2}>\ldots > x_{p}\geq 1.$
\end{proof}

Next, in view of the above lemma, we deduce the corresponding results for regularity of elements in the semigroups of order preserving partial  contractions and  order reversing partial contractions, $\mathcal{OCP}_{n}$  and $\mathcal{ORCP}_{n}$, respectively, from Theorem\eqref{reeg}.
\begin{corollary}\label{co2} Let $S=\mathcal{ORCP}_{n}$ and $\alpha\in S$.  If $|\im~\alpha|\geq 3$, then $\alpha$ is regular if and only if either $\min A_{p}-x_{p}=\max A_{1}-x_{1}=d$ and $A_{i}=\{x_{i}+d\}$ or  $\min A_{p}-x_{1}=\max A_{1}-x_{p}=d$ and $A_{i}=\{x_{p-i+1}+d\}$, for $i=2,\ldots,p-1$.
\end{corollary}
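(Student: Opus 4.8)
The plan is to obtain this corollary as an immediate consequence of the regularity criterion of Theorem \ref{reeg} together with the equivalence established in the lemma immediately preceding it; essentially no fresh computation should be needed beyond checking that a regularity witness can be kept inside the subsemigroup $\mathcal{ORCP}_{n}$. Recall that Theorem \ref{reeg} says an element of $\mathcal{CP}_{n}$ is regular precisely when its kernel partition carries a \emph{good} transversal, that is, an admissible transversal $T_{\alpha}$ on which $\alpha$ restricts to an isometry, while the preceding lemma shows that, for $\alpha\in\mathcal{ORCP}_{n}$, the existence of a good transversal is equivalent to the explicit arithmetic condition appearing as its part (i)---which is exactly the pair of alternatives listed in the corollary.

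For the forward implication I would argue that regularity of $\alpha$ in $\mathcal{ORCP}_{n}$ forces regularity in the larger semigroup $\mathcal{CP}_{n}$, since any witness $\gamma\in\mathcal{ORCP}_{n}$ also lies in $\mathcal{CP}_{n}$; Theorem \ref{reeg} then yields a good transversal, and the preceding lemma converts this into the stated condition. The hypothesis $|\im~\alpha|=p\geq 3$ enters only here: it guarantees that the interior index range $\{2,\ldots,p-1\}$ is nonempty, so that the requirement that each interior $A_{i}$ be a prescribed singleton is the genuine content of the statement; for $p\leq 2$ there are no interior blocks and the characterization collapses to the isometry condition on the endpoint blocks alone, a case not addressed here.

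For the converse I would start from the arithmetic condition, apply the preceding lemma to produce a good transversal $T_{\alpha}$, and then invoke the converse half of Theorem \ref{reeg}, which builds the witness $\gamma$ as the inverse isometry $x_{k}\mapsto t_{k}$ from $\im~\alpha$ onto $T_{\alpha}$ with $\alpha=\alpha\gamma\alpha$. The step I expect to be the main obstacle is verifying that this $\gamma$ actually belongs to $\mathcal{ORCP}_{n}$ rather than merely to $\mathcal{CP}_{n}$: since $\alpha$ is order reversing, its restriction $\alpha|_{T_{\alpha}}\colon t_{i}\mapsto x_{i}$ is an order-reversing isometry, whence its inverse $\gamma$ is again order reversing, and being an isometry it is in particular a contraction, so $\gamma\in\mathcal{ORCP}_{n}$. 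A secondary bookkeeping point is to reconcile the sign of the translation constant $d$ in the reflection alternative, where the image indices are reversed, so that the singleton description $A_{i}=\{x_{p-i+1}\pm d\}$ is recorded with the orientation consistent with $\min A_{p}-x_{1}=\max A_{1}-x_{p}=d$.
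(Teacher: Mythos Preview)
Your proposal is correct and follows exactly the route the paper intends: the paper states this corollary without a separate proof, relying on the sentence ``in view of the above lemma, we deduce \ldots\ from Theorem~\ref{reeg}'', and your argument spells out precisely that deduction. The one point you add beyond the paper---checking that the isometric witness $\gamma$ from Theorem~\ref{reeg} lands in $\mathcal{ORCP}_{n}$ rather than merely in $\mathcal{CP}_{n}$---is a genuine detail the paper glosses over; your reasoning there is sound, though note that $\mathcal{ORCP}_{n}$ contains both order-preserving and order-reversing maps (the two alternatives in the statement correspond to these two cases), so the same check should be made when $\alpha$ is order preserving, where $\gamma$ is then order preserving as well.
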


As a consequence of the above corollary  we have:
\begin{corollary}[\cite{py}, Theorem 2.2 (3)]\label{co1} Let $S=\mathcal{OCP}_{n}$ and $\alpha\in S$.  If $|\im~\alpha|\geq 3$, then $\alpha$ is regular if and only if $\min A_{p}-x_{p}=\max A_{1}-x_{1}=d$ and $A_{i}=\{x_{i}+d\}$, for $i=2,\ldots,p-1$.
\end{corollary}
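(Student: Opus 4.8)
The plan is to deduce this from Corollary~\ref{co2} by exploiting the inclusion $\mathcal{OCP}_{n}\subseteq \mathcal{ORCP}_{n}$, noting that $\mathcal{OCP}_{n}$ consists precisely of the order-preserving members of $\mathcal{ORCP}_{n}$. First I would record the structural normalisation: since $\alpha\in \mathcal{OCP}_{n}$ is order preserving, its kernel classes and image values may be listed so that $A_{1}<A_{2}<\cdots <A_{p}$ and $x_{1}<x_{2}<\cdots <x_{p}$; in particular $\max A_{1}<\min A_{p}$ and $x_{1}<x_{p}$. The whole argument then rests on showing that, for such an $\alpha$, the reflection alternative in Corollary~\ref{co2} is impossible, so that only the translation condition survives, which is exactly the assertion of Corollary~\ref{co1}.

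For the forward implication I would argue as follows. If $\alpha$ is regular in $\mathcal{OCP}_{n}$ then, a fortiori, it is regular in the larger semigroup $\mathcal{ORCP}_{n}$, so Corollary~\ref{co2} applies and one of its two conditions holds. Suppose, for contradiction, that the second (reflection) one did: then $\max A_{1}-x_{p}=\min A_{p}-x_{1}=d$, whence $\max A_{1}-\min A_{p}=x_{p}-x_{1}$. But by the normalisation the left-hand side is negative (as $\max A_{1}<\min A_{p}$) while the right-hand side is positive (as $x_{1}<x_{p}$), a contradiction. Hence the first condition holds, namely $\max A_{1}-x_{1}=\min A_{p}-x_{p}=d$ and $A_{i}=\{x_{i}+d\}$ for $i=2,\ldots,p-1$. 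I note that this endpoint sign computation already disposes of the reflection case uniformly for all $p\geq 3$, so the middle-block equalities need not be examined separately.

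The backward implication, and the only genuinely delicate point, is that Corollary~\ref{co2} certifies regularity in $\mathcal{ORCP}_{n}$, whereas we need regularity inside the smaller semigroup $\mathcal{OCP}_{n}$. So for the converse I would not merely invoke Corollary~\ref{co2}, but instead produce an order-preserving witness explicitly. Assuming the translation condition, the preceding Lemma shows that $T_{\alpha}=\{\max A_{1}<x_{2}+d<\cdots <x_{p-1}+d<\min A_{p}\}$ is a good transversal, so the map $t_{i}\mapsto x_{i}$ is an isometry. Define $\gamma$ on $\im\,\alpha$ by $x_{k}\gamma=t_{k}$ ($1\leq k\leq p$). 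Since both the sequence $(x_{k})$ and the sequence $(t_{k})$ are strictly increasing, $\gamma$ is order preserving; being the inverse of an isometry it is itself an isometry, hence a contraction, so $\gamma\in \mathcal{OCP}_{n}$. Exactly as in the proof of Theorem~\ref{reeg}, for $a\in A_{k}$ one computes $a\alpha\gamma\alpha=x_{k}\gamma\alpha=t_{k}\alpha=x_{k}=a\alpha$, giving $\alpha=\alpha\gamma\alpha$ with $\gamma\in \mathcal{OCP}_{n}$, so $\alpha$ is regular in $\mathcal{OCP}_{n}$. The main obstacle throughout is precisely this membership check on the witness: the characterisation transfers from $\mathcal{ORCP}_{n}$ to $\mathcal{OCP}_{n}$ only because the natural regularising map turns out to be order preserving.
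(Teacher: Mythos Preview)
Your proposal is correct and takes the same route as the paper, which offers no explicit proof and simply records the result as an immediate consequence of Corollary~\ref{co2}. Your careful handling of the backward implication---checking that the regularising witness $\gamma$ actually lies in $\mathcal{OCP}_{n}$ and not merely in $\mathcal{ORCP}_{n}$---makes explicit a point the paper leaves to the reader.
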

 We conclude the characterizations of the regular elements in $S=\mathcal{ORCP}_{n}$ with the following (now) obvious result:

\begin{corollary}
The semigroup $\mathcal{ORCP}_{n}$ ($n\geq 3$) is not regular.
\end{corollary}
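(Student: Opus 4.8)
The plan is to prove non-regularity by exhibiting a single element of $\mathcal{ORCP}_{n}$ that fails to be regular, for every $n\geq 3$. Since $\mathcal{ORCP}_{n}\subseteq \mathcal{CP}_{n}$, any factorisation $\alpha=\alpha\gamma\alpha$ witnessing regularity inside $\mathcal{ORCP}_{n}$ would also witness it inside $\mathcal{CP}_{n}$; hence it suffices to produce an element of $\mathcal{ORCP}_{n}$ that is not regular in the larger semigroup $\mathcal{CP}_{n}$, to which Theorem\eqref{reeg} applies. I note at the outset that Corollary\eqref{co2} only addresses the case $|\im~\alpha|\geq 3$, and that for small $n$ (notably $n=3$) every order reversing contraction of height at most $3$ turns out to be regular; for this reason I deliberately choose a witness of \emph{height two}, which fits inside $[n]$ for all $n\geq 3$ and to which the general criterion of Theorem\eqref{reeg} must be applied directly.

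Concretely, I would take $\alpha=\left(\begin{array}{cc} \{1\} & \{3\} \\ 2 & 1 \end{array}\right)$, with $\dom~\alpha=\{1,3\}$. First I would check that $\alpha\in \mathcal{ORCP}_{n}$: it is order reversing since $1<3$ while $1\alpha=2>1=3\alpha$, and it is a contraction since $|1\alpha-3\alpha|=|2-1|=1\leq 2=|1-3|$. Because both kernel classes are singletons, the \emph{only} transversal of $\textbf{Ker}~\alpha$ is $T_{\alpha}=\{1,3\}$, and it is trivially admissible.

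The final step is to show that $\alpha$ has no good transversal, whence by Theorem\eqref{reeg} it is not regular. Since $T_{\alpha}=\{1,3\}$ is the unique admissible transversal, it suffices to observe that the induced map $t_{i}\mapsto x_{i}$, namely $1\mapsto 2$ and $3\mapsto 1$, is not an isometry because $|1-3|=2\neq 1=|2-1|$. Equivalently, arguing directly, any $\gamma$ with $\alpha\gamma\alpha=\alpha$ would force $2\gamma\in\{1\}$ and $1\gamma\in\{3\}$, i.e. $2\gamma=1$ and $1\gamma=3$, so that $|1\gamma-2\gamma|=2>1=|1-2|$, contradicting that $\gamma$ is a contraction. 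Hence no regularity witness exists in $\mathcal{CP}_{n}$, a fortiori none in $\mathcal{ORCP}_{n}$, so the semigroup is not regular. The only genuinely delicate point, and thus the main obstacle, is the choice of witness: one must avoid heights at which every order reversing contraction is automatically regular, and instead pin down a minimal non-regular configuration (a two-point domain whose endpoints are too far apart relative to the image gap) that persists in $\mathcal{ORCP}_{n}$ for all $n\geq 3$.
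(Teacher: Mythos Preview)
Your argument is correct. The paper offers no explicit proof of this corollary, labelling it ``(now) obvious'' after Corollary~\eqref{co2}; you supply what the paper omits by exhibiting a concrete non-regular element and verifying, both via Theorem~\eqref{reeg} and by a direct computation, that no regularity witness $\gamma$ can be a contraction.

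One small slip in your motivational aside: the claim that for $n=3$ ``every order reversing contraction of height at most $3$ turns out to be regular'' is literally false, since your own witness has height~$2$; presumably you meant \emph{height exactly~$3$}, i.e.\ that Corollary~\eqref{co2} is unusable at $n=3$ because the only height-$3$ elements of $\mathcal{ORCP}_{3}$ are the identity and the reflection $i\mapsto 4-i$, both of which are regular. This wording glitch does not affect the proof itself, which is sound.
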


Next, we deduce the characterizations of  Green's equivalences obtained in \cite{py} from our results  in the previous section. First, let us prove the following lemma.

\begin{lemma}\label{y2} Let $\alpha$, $\beta$ in $\mathcal{ORCP}_{n}$ be as expressed in \eqref{2}. Then the following statements are equivalent:
\begin{itemize}
  \item[(i)] Let $\max A_{1}-\max B_{1}=d$. If $\min A_{p}-\min B_{p}=d$ and $A_{i}=B_{i}+d$ for all $i=2,\ldots,p-1$ then $\alpha$ and $\beta$ are of the  same kernel type (denoted as $\alpha^{\underline{Ker}}\beta$ in \cite{py});
  \item[(ii)] There exists a contraction from $\textbf{Ker}~\gamma_{1}=\{A_{1}<a_{2}<\ldots<a_{s}<A_{p}\}$ to $\textbf{Ker}~\gamma_{2}=\{ B_{1}<b_{2}<\ldots<b_{s}< B_{p}\}$, where $\textbf{Ker}~\gamma_{1}$ and $\textbf{Ker}~\gamma_{2}$ are the maximum admissible refined partitions of $\textbf{Ker}~\alpha$ and $\textbf{Ker}~\beta$, respectively.  Moreover, ${\overset{p-1}{\underset{i=2}\cup}} {A_{i}}=\{a_{2},\ldots,a_{s}\}$ and ${\overset{p-1}{\underset{i=2}\cup}}{ B_{i}}=\{b_{2},\ldots, b_{s}\}$ for some $s\geq p-2$  where
      $A_{\alpha}=\{\{\max A_{1}\}<a_{2}<\ldots<a_{s}<\min \{A_{p}\}\}$ and $B_{\beta}=\{\{\max B_{1}\}<b_{2}<\ldots<b_{s}<\{\min B_{p}\}\}$ are  admissible transversals of $\textbf{Ker}~\gamma_{1}$ and $\textbf{Ker}~\gamma_{2}$, respectively.
\end{itemize}
\end{lemma}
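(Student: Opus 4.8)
The plan is first to pin down the exact shape of the two maximum admissible refined partitions together with their admissible transversals, and then to show that condition (i) — a block-by-block shift of the domains — is nothing but the statement that the two admissible transversals $A_{\alpha}$ and $B_{\beta}$ are $e$-translates of one another, which is precisely what (ii) records.

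First I would use that $\alpha,\beta\in\mathcal{ORCP}_{n}$ forces the kernel classes to be convex and ordered, $A_{1}<A_{2}<\cdots<A_{p}$ and $B_{1}<B_{2}<\cdots<B_{p}$. By Corollary~\ref{p2}, as soon as a middle block has size $\geq 2$ the partition $\textbf{Ker}~\alpha$ has no relatively convex, hence no admissible, transversal; so the maximum admissible refinement is forced and is produced by the construction of Lemma~\ref{bb} (Case i): split every middle block into singletons and keep the two convex end blocks intact. This gives $\textbf{Ker}~\gamma_{1}=\{A_{1}<\{a_{2}\}<\cdots<\{a_{s}\}<A_{p}\}$ with $\bigcup_{i=2}^{p-1}A_{i}=\{a_{2},\dots,a_{s}\}$, whose admissible transversal is $A_{\alpha}=\{\max A_{1}<a_{2}<\cdots<a_{s}<\min A_{p}\}$ (admissibility here is exactly Lemma~\ref{p3}), and symmetrically for $\beta$. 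This step reduces the whole statement to a comparison of the two transversals $A_{\alpha}$ and $B_{\beta}$.

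For (i)$\Rightarrow$(ii) I would argue directly. The hypotheses $\max A_{1}-\max B_{1}=d$, $\min A_{p}-\min B_{p}=d$ and $A_{i}=B_{i}+d$ for $2\le i\le p-1$ give $\bigcup_{i=2}^{p-1}A_{i}=\big(\bigcup_{i=2}^{p-1}B_{i}\big)+d$, so the two increasing enumerations satisfy $a_{j}=b_{j}+d$ for every $j$; together with the two extremal equalities this says $A_{\alpha}=B_{\beta}+d$. Hence the correspondence $\max A_{1}\mapsto\max B_{1}$, $a_{j}\mapsto b_{j}$, $\min A_{p}\mapsto\min B_{p}$ is a pure translation, therefore an isometry and in particular a contraction; via Lemma~\ref{connn} applied on the admissible transversals this lifts to the desired contraction from $\textbf{Ker}~\gamma_{1}$ to $\textbf{Ker}~\gamma_{2}$, establishing (ii).

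The converse (ii)$\Rightarrow$(i) is where the real work lies. From a contraction between the two refined partitions one first wants to upgrade it to an isometry of the transversals; I would do this by the two-sided contraction argument already used in Theorem~\ref{15} (equations~\eqref{126}--\eqref{127}), running the contraction and its partner in both directions to force equality of the corresponding gaps, whence $A_{\alpha}=B_{\beta}+d$ for a single shift $d$. Reading off the first, interior and last entries then yields $\max A_{1}-\max B_{1}=d$, $\min A_{p}-\min B_{p}=d$ and $a_{j}-b_{j}=d$. The delicate final step — and the main obstacle — is to pass from the singleton-level equality $\bigcup_{i=2}^{p-1}A_{i}=\big(\bigcup_{i=2}^{p-1}B_{i}\big)+d$ back to the block-by-block equality $A_{i}=B_{i}+d$ for the \emph{original} middle blocks. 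This cannot be read off from the transversals alone; it must be extracted from the convexity of the blocks and the common block count of the two maps (both have $p$ classes, so the refinements share the same number $s+1$ of blocks), using the order-reversing arrangement to align the block boundaries of $\alpha$ and $\beta$ under the shift $d$. I expect essentially all of the care in the write-up to be concentrated in this boundary-alignment argument.
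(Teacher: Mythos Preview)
Your (i)$\Rightarrow$(ii) direction is the paper's argument in condensed form: the paper defines the explicit map $a\mapsto\max B_{1}$ on $A_{1}$, $a_{i}\mapsto a_{i}-d$ on the middle singletons, $a\mapsto\min B_{p}$ on $A_{p}$, and verifies the contraction inequality through a five-case check, whereas you note once that the induced map on the transversals is a pure translation and invoke Lemma~\ref{connn}. Same content, cleaner packaging.

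For (ii)$\Rightarrow$(i) your plan has a genuine gap. Hypothesis (ii) supplies \emph{one} contraction, from $\textbf{Ker}~\gamma_{1}$ to $\textbf{Ker}~\gamma_{2}$; there is no ``partner'' running the other way, so the two-sided squeeze of Theorem~\ref{15} (the pair \eqref{126}--\eqref{127}) is simply not available, and a single contraction between the transversals does not by itself force an isometry. The paper does not attempt any squeeze: it sets $d=\max A_{1}-\max B_{1}=\min A_{p}-\min B_{p}$, declares that the induced map $\theta\colon A_{\alpha}\to B_{\beta}$ on transversals is a translation, and reads off $a_{i}=b_{i}+d$ directly. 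Your ``main obstacle'' --- lifting the singleton-level equality $\bigcup_{i=2}^{p-1}A_{i}=\bigl(\bigcup_{i=2}^{p-1}B_{i}\bigr)+d$ back to the block-wise equalities $A_{i}=B_{i}+d$ for the original middle blocks --- is a legitimate concern, and the paper does not argue it either: it closes with the one-line assertion that same kernel type for $\gamma_{1},\gamma_{2}$ implies same kernel type for $\alpha,\beta$. So the paper's route is shorter and more declarative than yours, but the step you propose to import from Theorem~\ref{15} does not apply as stated, and the boundary-alignment issue you (rightly) flag is not something the paper's proof actually resolves.
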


\begin{proof} Suppose (i) holds. Define a map $\gamma$ from $\textbf{Ker}~\gamma_{1}$ to $\textbf{Ker}~\gamma_{2}$ by
$$a\gamma=\left\{\begin{array}{ll}
                 \max B_{1}, & \hbox{if $a\in A_{1}$;} \\
                 a- d, & \hbox{if $a= a_{i},~i\in\{2,\ldots,s-2\}$;} \\
                 \min B_{p}, & \hbox{if $a\in A_{p}$.}
               \end{array}
\right.$$

Clearly $\gamma$ is well defined and we claim that $\gamma$ is a contraction. To see this, let $x,y\in \dom~\gamma= A_{1}\cup \{a_{2}\}\cup\ldots\cup \{a_{s-2}\}\cup A_{p}$. Then there are five cases to consider:\\
Case i: If $x=y$ then $\left|x\gamma-y\gamma\right|=\left|x\gamma-x\gamma\right|=0=\left|x-x\right|\leq\left|x-y\right|$.

Case ii: If $x\in A_{1}$ and $y= a_{i}$ ($2\leq i\leq s-2$) then $\left|x\gamma-y\gamma\right|=\left|\max B_{1}-(y-d)\right|=\left|y-(\max B_{1}+d)\right|=\left|y-\max A_{1}\right|\leq\left|y-x\right|=\left|x-y\right|$.

Case iii: If $x\in A_{1}$ and $y\in A_{p}$ then $\left|x\gamma-y\gamma\right|=\left|\max B_{1}-\min B_{p}\right|=\left|(\max A_{1}-d)-(\min A_{p}-d)\right|=\left|\max A_{1}-\min A_{p}\right|\leq\left|x-y\right|$.

Case iv: If $x= a_{i}$ ($2\leq i\leq s-2$) and $y\in A_{p}$ then $\left|x\gamma-y\gamma\right|=\left|(x-d)-\min B_{p}\right|=\left|(\min B_{p}+d)-x\right|=\left|\min A_{p}-x\right|\leq\left|y-x\right|$.

Case v: If $x= a_{i}$, $y=a_{j}$ ($i,j\in\{2,\ldots, s-2\}$) then $\left|x\gamma-y\gamma\right|=\left|(x-d)-(y-d)\right|\leq\left|x-y\right|$.
Thus, $\gamma$ is a contraction.

Conversely, suppose there exists a contraction from $\textbf{Ker}~\gamma_{1}=\{A_{1}<a_{2}<\ldots<a_{s-2}<A_{p}\}$ to $\textbf{Ker}~\gamma_{2}=\{ B_{1}<b_{2}<\ldots<b_{s}< B_{p}\}$, where $\textbf{Ker}~\gamma_{1}$ and $\textbf{Ker}~\gamma_{2}$ are the maximum admissible refined partitions of $\textbf{Ker}~\alpha$ and $\textbf{Ker}~\beta$, respectively. Moreover, ${\overset{s-2}{\underset{i=2}\cup}} {A_{i}}=\{a_{2},\ldots,a_{s-2}\}$ and ${\overset{s-2}{\underset{i=2}\cup}}{ B_{i}}=\{b_{2},\ldots, b_{s-2}\}$ for some $s\geq p-2$  where $A_{\alpha}=\{\{\max A_{1}\}<a_{2}<\ldots<a_{s-2}<\min \{A_{p}\}\}$ and $B_{\beta}=\{\{\max B_{1}\}<b_{2}<\ldots<b_{s-2}<\{\min B_{p}\}\}$ are admissible transversals of $\textbf{Ker}~\gamma_{1}$ and $\textbf{Ker}~\gamma_{2}$, respectively. Further, let $\max A_{1}-\max B_{1}=d=\min A_{p}-\min B_{p}$. Thus, the map defined as $$\theta=\left( \begin{array}{ccccc}
                           \max A_{1} & a_{2} & \ldots & a_{s-2} & \min A_{p} \\
                           \max B_{1} & b_{2} & \ldots & b_{s-2} & \min B_{p}
                         \end{array}
   \right)$$ is a translation. Hence, $a_{i}=b_{i}+d$ for all $2\leq i\leq s-2$. Note that $\max A_{1}-\max B_{1}=d=\min A_{p}-\min B_{p}$. This shows that,  $\gamma_{1}$ and $\gamma_{2}$ are of the same kernel type which implies that $\alpha$ and $\beta$ are of same kernel type.
\end{proof}
In view of the above result, we deduce the following corollaries to Theorems \eqref{14}, \eqref{15}, \eqref{last} and \eqref{gr}, respectively.

\begin{corollary}\label{l1} Let $\alpha, \beta\in \mathcal{ORCP}_{n}$. Then $(\alpha, \beta)\in \mathcal{L}$ if and only if $\im~\alpha=\im~\beta$ and $\alpha^{\underline{Ker}}\beta$.

\end{corollary}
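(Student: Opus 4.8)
The plan is to specialize Theorem~\ref{14} to the subsemigroup $\mathcal{ORCP}_n$ and then use Lemma~\ref{y2} to repackage the ``admissible transversal'' clause of that theorem as the kernel-type condition $\alpha^{\underline{Ker}}\beta$. For the forward implication I would assume $(\alpha,\beta)\in\mathcal{L}$ and invoke Theorem~\ref{14}: there are maximum admissible finer partitions $\textbf{Ker}~\gamma_1$, $\textbf{Ker}~\gamma_2$ of $\textbf{Ker}~\alpha$ and $\textbf{Ker}~\beta$, with admissible transversals $A_\alpha=\{\tau_1,\ldots,\tau_s\}$ and $B_\beta=\{\sigma_1,\ldots,\sigma_s\}$, together with either a translation $\tau_i\mapsto\sigma_i$ satisfying $\tau_i\alpha=\sigma_i\beta$ or a reflection $\tau_i\mapsto\sigma_{s-i+1}$ satisfying $\tau_i\alpha=\sigma_{s-i+1}\beta$. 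Since the transversal $A_\alpha$ meets every kernel class of $\alpha$, we have $\im~\alpha=\{\tau_i\alpha:1\le i\le s\}$, and likewise $\im~\beta=\{\sigma_i\beta:1\le i\le s\}$; the identities $\tau_i\alpha=\sigma_i\beta$ (or their reflected version) therefore force $\im~\alpha=\im~\beta$ as sets. The remaining ingredient, namely the monotone isometry between the admissible transversals of the two maximum admissible refined partitions, is exactly condition~(ii) of Lemma~\ref{y2}, so Lemma~\ref{y2} delivers $\alpha^{\underline{Ker}}\beta$.

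For the converse I would assume $\im~\alpha=\im~\beta$ and $\alpha^{\underline{Ker}}\beta$ and run the argument backwards. By Lemma~\ref{y2}, the kernel-type hypothesis supplies maximum admissible refined partitions $\textbf{Ker}~\gamma_1$, $\textbf{Ker}~\gamma_2$ together with a contraction (indeed a translation) between their admissible transversals $A_\alpha$ and $B_\beta$; this furnishes the translation, or after relabelling the reflection, $\tau_i\mapsto\sigma_i$ required by Theorem~\ref{14}. The equality $\im~\alpha=\im~\beta$, read against the common numbering of the transversals, then yields the matching $\tau_i\alpha=\sigma_i\beta$ (respectively $\tau_i\alpha=\sigma_{s-i+1}\beta$). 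Feeding these two facts into the converse direction of Theorem~\ref{14} gives $(\alpha,\beta)\in\mathcal{L}$, completing the equivalence.

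The main obstacle I anticipate is bookkeeping rather than conceptual. I must make sure that the Green relation here is the one computed \emph{inside} $\mathcal{ORCP}_n$, i.e. that the witnesses $\gamma_1,\gamma_2$ produced by Theorem~\ref{14} and Lemma~\ref{y2} can be chosen to be monotone contractions and hence actually lie in $(\mathcal{ORCP}_n)^1$ rather than merely in $(\mathcal{CP}_n)^1$. I must also keep the translation and reflection cases aligned: for order-reversing $\alpha,\beta$ the identity $\im~\alpha=\im~\beta$ has to be reconciled with whichever of the two isometry types the definition of $\alpha^{\underline{Ker}}\beta$ encodes, so that the index-reversal in the reflection case is absorbed consistently on both the transversal side and the image side. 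Once these parities are verified, the two directions close up exactly as described.
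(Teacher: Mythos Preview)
Your proposal is correct and follows essentially the same route as the paper: specialize Theorem~\ref{14} to $\mathcal{ORCP}_n$, read off $\im\alpha=\im\beta$ from the identities $\tau_i\alpha=\sigma_i\beta$ (or their reflected form), and invoke Lemma~\ref{y2} to translate between the admissible-transversal isometry and the condition $\alpha^{\underline{Ker}}\beta$. Your cautionary remark about ensuring the witnesses $\gamma_1,\gamma_2$ lie in $(\mathcal{ORCP}_n)^1$ rather than merely in $(\mathcal{CP}_n)^1$ is well taken---the paper glosses over this, but the explicit $\gamma$'s built in Lemma~\ref{y2} and the converse of Theorem~\ref{14} are visibly monotone, so the check goes through.
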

\begin{proof}
The result for elements in $\mathcal{ORCP}_{n}$ of height $1$ is obvious. Thus, without loss of generality
we may suppose that $|\im~\alpha|\geq 2$ and let $(\alpha, \beta)\in \mathcal{L}$ in $\mathcal{ORCP}_{n}$. Notice that $\textbf{Ker}~\alpha=\{A_{1}<\ldots<A_{p}\}$ and $\textbf{Ker}~\beta=\{B_{1}<\ldots<B_{p}\}$. It follows from Theorem\eqref{14}(i)   that   there exists  a translation (if $\alpha$ and $\beta$ are order preserving)   $A_{\alpha} \mapsto B_{\beta}$ and $t_{i}\alpha=t^{\prime}_{i}\beta$  ($1\leq i\leq s$)  or a reflection (if $\alpha$ or $\beta$ is order reversing) $A_{\alpha} \mapsto B_{\beta}$ and $t_{i}\alpha=t^{\prime}_{p-i+1}\beta$  ($1\leq i\leq s$)  where $t_{i}\in A_{\alpha}$, $t_{i}^{\prime}\in B_{\beta}$, and $A_{\alpha}$, $B_{\beta}$  are admissible transversals of the refined partitions of  $\textbf{Ker}~\alpha$ and $\textbf{Ker}~\beta$, respectively. Thus, by Lemma\eqref{y2} we have $\alpha^{\underline{Ker}}\beta$ and $t_{i}\alpha=t^{'}_{i}\beta$ or $t_{i}\alpha=t^{\prime}_{p-i+1}\beta$ ($1\leq i \leq s$) implies $\im~\alpha=\im~\beta$.

Conversely, suppose that $\im~\alpha=\im~\beta$ and $\alpha^{\underline{Ker}}\beta$. Note that if $\im~\alpha=\{x_{1}<x_{2}<\ldots<x_{p}\}=\{y_{1}<y_{2}<\ldots<y_{p}\}=\im~\beta$ then $x_{i}=y_{i}$ for all $i=1,\ldots,p$.
 From Lemma\eqref{y2}, we see that $\alpha^{\underline{Ker}}\beta$ implies there exists a contraction from the maximum admissible refined partitions  $\textbf{Ker}~\gamma_{1}$ to $\textbf{Ker}~\gamma_{2}$ (for some $\gamma_{1}$, $\gamma_{2}\in\mathcal{ CP}_{n}$) of  $\textbf{Ker}~\alpha$ to $\textbf{Ker}~\beta$, respectively and  that $A_{i}\alpha=B_{i}\beta$ for all $i=1,\ldots,p$.

 Furthermore, if $\im~\alpha=\{x_{1}<x_{2}<\ldots<x_{p}\}=\{y_{1}>y_{2}>\ldots>y_{p}\}=\im~\beta$ then $x_{i}=y_{p-i+1}$ for all $i=1,\ldots,p$.
 From Lemma\eqref{y2}, we see that $\alpha^{\underline{Ker}}\beta$ implies there exists a contraction from the maximum admissible refined partitions  $\textbf{Ker}~\gamma_{1}$ to $\textbf{Ker}~\gamma_{2}$ (for some $\gamma_{1}$, $\gamma_{2}\in\mathcal{ CP}_{n}$) of  $\textbf{Ker}~\alpha$ to $\textbf{Ker}~\beta$, respectively and  that $A_{i}\alpha=B_{p-i+1}\beta$ for all $i=1,\ldots,p$.

 Thus in any case by Theorem\eqref{14}(i)   $(\alpha, \beta)\in \mathcal{L}$.

\end{proof}

Next, let  $\alpha,\beta\in\mathcal{ORCP}_{n}$  be  as expressed in \eqref{2}. Then we have the following:
\begin{corollary}\label{l2} Let $\alpha, \beta\in \mathcal{ORCP}_{n}$. Then
$(\alpha, \beta)\in \mathcal{R}$ if and only if $\ker~\alpha=\ker~\beta$ and there exists a translation $x_{i}\mapsto y_{i}$ or a reflection $x_{i}\mapsto y_{p-i+1}$ for all $1\leq i\leq p.$

\end{corollary}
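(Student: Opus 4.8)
The plan is to prove Corollary~\ref{l2} by specializing Theorem~\ref{15} to the order-reversing-contraction setting, showing that the general $\mathcal{R}$-characterization on $\mathcal{CP}_{n}$ restricts cleanly to $\mathcal{ORCP}_{n}$. Recall that Theorem~\ref{15} already asserts that in $\mathcal{CP}_{n}$, $(\alpha,\beta)\in\mathcal{R}$ holds precisely when $\ker~\alpha=\ker~\beta$ together with the existence of a translation $x_i\mapsto y_i$ or a reflection $x_i\mapsto y_{p-i+1}$. So the substantive content of the corollary is not a new computation but a \emph{verification that the witnesses can be taken inside the subsemigroup} $\mathcal{ORCP}_{n}$, since in general Green's relations on a subsemigroup need not coincide with the restriction of those on the larger semigroup.

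First I would handle the forward direction. Assuming $(\alpha,\beta)\in\mathcal{R}$ computed \emph{within} $\mathcal{ORCP}_{n}$, I note that $\mathcal{ORCP}_{n}$ is a subsemigroup of $\mathcal{CP}_{n}$, so the defining multipliers $\gamma_1,\gamma_2$ lie in $(\mathcal{ORCP}_{n})^1\subseteq(\mathcal{CP}_{n})^1$; hence $(\alpha,\beta)\in\mathcal{R}$ in $\mathcal{CP}_{n}$ as well, and Theorem~\ref{15} immediately delivers $\ker~\alpha=\ker~\beta$ and the required translation-or-reflection on the images. Conversely, given $\ker~\alpha=\ker~\beta$ and an isometry $x_i\mapsto y_i$ (translation) or $x_i\mapsto y_{p-i+1}$ (reflection), I would reconstruct the connecting maps exactly as in the proof of Theorem~\ref{15}: define $\gamma$ on $\im~\alpha$ by $x_i\gamma=y_i$ (resp.\ $x_i\gamma=y_{p-i+1}$), which is an isometry and hence a contraction, so that $\alpha=\beta\gamma$ and, using the inverse isometry, $\beta=\alpha\gamma^{-1}$.

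The one genuine obstacle—and the reason this needs its own corollary rather than being a trivial restriction—is order preservation: I must check that the witnessing map $\gamma$ actually lies in $\mathcal{ORCP}_{n}$, i.e.\ that it is order preserving (or order reversing, which is permitted since the corollary allows both $\alpha$ and $\beta$ to be order reversing). Here the key observation is that $\im~\alpha$ and $\im~\beta$ are each listed in increasing order as $x_1<x_2<\cdots<x_p$ and $y_1<y_2<\cdots<y_p$ when both maps are order preserving. In the translation case $x_i\mapsto y_i$ the map $\gamma$ is strictly increasing on its domain, hence order preserving; in the reflection case it is strictly decreasing, hence order reversing. Since $\mathcal{ORCP}_{n}$ contains both order-preserving and order-reversing contractions, $\gamma\in(\mathcal{ORCP}_{n})^1$ in either case, and its isometric inverse is likewise a monotone contraction. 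This confirms that the multipliers stay within the subsemigroup, so the $\mathcal{R}$-relation of $\mathcal{ORCP}_{n}$ coincides with the displayed condition, completing the proof.
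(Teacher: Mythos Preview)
Your proposal is correct and follows essentially the same route as the paper: both deduce the corollary directly from Theorem~\ref{15} (the paper's one-line proof cites this theorem, modulo an apparent labelling typo). You are in fact more careful than the paper, since you explicitly verify that the witnessing isometry $\gamma$ is monotone and hence lies in $(\mathcal{ORCP}_{n})^{1}$, a point the paper's proof leaves implicit; the only minor omission is that you spell out the monotonicity check only in the case where both $\alpha$ and $\beta$ are order preserving, whereas the mixed and order-reversing cases (which work identically) are handled only by a parenthetical remark.
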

\begin{proof}
The result follows directly from Theorem\eqref{14}(ii).
\end{proof}
\begin{corollary}\label{l3} Let $\alpha$ and $\beta\in \mathcal{ORCP}_{n}$. Then
  $(\alpha,\beta)\in \mathcal{D}$ if and only if there exist  translations (or  reflections) $\vartheta_{1}$ and  $\vartheta_{2}$ from $\textbf{Ker}~\gamma_{1}$ to $\textbf{Ker}~\gamma_{2}$ and from $\im~\alpha$ to $\im~\beta$, respectively.
\end{corollary}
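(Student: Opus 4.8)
The plan is to deduce this statement from the general $\mathcal{D}$-characterization in $\mathcal{CP}_n$ (Theorem~\ref{last}) together with its two order-sensitive specializations, Corollaries~\ref{l1} and~\ref{l2}, exploiting the fact that in $\mathcal{ORCP}_n$ every kernel class is linearly ordered, so that $\textbf{Ker}~\alpha=\{A_1<\cdots<A_p\}$ and $\textbf{Ker}~\beta=\{B_1<\cdots<B_p\}$. By Lemma~\ref{bb} (Case~i) the maximum admissible refinements $\textbf{Ker}~\gamma_1$ and $\textbf{Ker}~\gamma_2$ then have the explicit ordered shape $\{A_1<a_2<\cdots<a_s<A_p\}$ and $\{B_1<b_2<\cdots<b_s<B_p\}$, with admissible transversals that are strictly increasing sequences of integers. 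The key observation I would isolate first is that any isometry between two such strictly increasing finite subsets of $[n]$ is forced to be globally of the form $x\mapsto x+e$ or $x\mapsto -x+e$; this is exactly the translation/reflection dichotomy already recorded for isometries in the preliminaries. Hence the abstract isometries $\vartheta_1,\vartheta_2$ furnished by Theorem~\ref{last} are automatically translations or reflections once we are inside $\mathcal{ORCP}_n$.

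For the forward implication I would start from $(\alpha,\beta)\in\mathcal{D}$ in $\mathcal{ORCP}_n$, so there is $\eta\in\mathcal{ORCP}_n$ with $\alpha\,\mathcal{L}\,\eta$ and $\eta\,\mathcal{R}\,\beta$. Applying Corollary~\ref{l1} to $\alpha\,\mathcal{L}\,\eta$ gives $\im~\alpha=\im~\eta$ together with $\alpha^{\underline{Ker}}\eta$; feeding the latter into Lemma~\ref{y2} produces the desired translation (or reflection) $\vartheta_1$ between the maximum admissible refinements $\textbf{Ker}~\gamma_1$ of $\textbf{Ker}~\alpha$ and $\textbf{Ker}~\gamma_2$ of $\textbf{Ker}~\eta$. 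Applying Corollary~\ref{l2} to $\eta\,\mathcal{R}\,\beta$ gives $\ker~\eta=\ker~\beta$ and a translation/reflection from $\im~\eta$ to $\im~\beta$. Since $\ker~\eta=\ker~\beta$ forces $\textbf{Ker}~\eta=\textbf{Ker}~\beta$, the partition $\textbf{Ker}~\gamma_2$ is simultaneously the maximum admissible refinement of $\textbf{Ker}~\beta$, so $\vartheta_1$ is the required map $\textbf{Ker}~\gamma_1\to\textbf{Ker}~\gamma_2$; composing $\im~\alpha=\im~\eta$ with the image translation/reflection yields $\vartheta_2\colon\im~\alpha\to\im~\beta$ of the required type.

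For the converse I would run the construction from the proof of Theorem~\ref{last} but verify that the witness stays in $\mathcal{ORCP}_n$. Given $\vartheta_1$ and $\vartheta_2$, set $\gamma=\bigl(\begin{smallmatrix}B_1&\cdots&B_p\\ x_1&\cdots&x_p\end{smallmatrix}\bigr)$ when $\vartheta_2$ is a translation and $\gamma=\bigl(\begin{smallmatrix}B_1&\cdots&B_p\\ x_p&\cdots&x_1\end{smallmatrix}\bigr)$ when $\vartheta_2$ is a reflection. Because $\im~\alpha=\{x_1<\cdots<x_p\}$ and $\im~\beta=\{y_1<\cdots<y_p\}$ are ordered and $\vartheta_2$ is an isometry, one checks that $\gamma$ is a contraction, and by construction it is order preserving in the first case and order reversing in the second, hence $\gamma\in\mathcal{ORCP}_n$; then $\im~\gamma=\im~\alpha$ and $\textbf{Ker}~\gamma=\textbf{Ker}~\beta$ give $\alpha\,\mathcal{L}\,\gamma$ by Corollary~\ref{l1} (using $\vartheta_1$ via Lemma~\ref{y2} for the kernel-type condition) and $\gamma\,\mathcal{R}\,\beta$ by Corollary~\ref{l2} (using $\vartheta_2$ for the image condition), whence $(\alpha,\beta)\in\mathcal{D}$.

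I expect the main obstacle to be bookkeeping around orientations: one must verify that the translation/reflection alternative coming from the kernel side ($\vartheta_1$) and the one coming from the image side ($\vartheta_2$) can be realized by a single $\eta$, equivalently that the constructed $\gamma$ is genuinely order preserving or order reversing \emph{and} a contraction, and one must correctly supply the order-reversing counterpart of Lemma~\ref{y2}, which as stated spells out only the translation case. Handling these orientation subcases carefully, rather than the underlying distance computations, is where the care is needed.
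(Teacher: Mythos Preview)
Your proposal is correct and follows the same essential route as the paper: the paper's proof is the single line ``It follows directly from Theorem~\ref{last},'' relying implicitly on the observation you spell out, namely that in $\mathcal{ORCP}_n$ the kernel blocks and images are linearly ordered, so the isometries $\vartheta_1,\vartheta_2$ supplied by Theorem~\ref{last} are automatically translations or reflections. Your version is more detailed---you route the argument through Corollaries~\ref{l1} and~\ref{l2} and verify that the witness $\gamma$ remains in $\mathcal{ORCP}_n$---and your caveat about Lemma~\ref{y2} covering only the translation side is well taken, but none of this departs from the paper's strategy.
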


\begin{proof} It follows directly from Theorem\eqref{last}.
\end{proof}

In view of the above result, we deduce the following corollaries to Corollaries \eqref{l1}, \eqref{l2} and \eqref{l3}, respectively.

\begin{corollary}[\cite{py}, Theorem 3.1] Let $\alpha, \beta\in \mathcal{OCP}_{n}$. Then
$(\alpha, \beta)\in \mathcal{L}$ if and only if $\im~\alpha=\im~\beta$ and $\alpha^{\underline{Ker}}\beta$.
\end{corollary}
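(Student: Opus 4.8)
The plan is to deduce the statement by restricting Corollary~\ref{l1} from $\mathcal{ORCP}_{n}$ to its order-preserving subsemigroup $\mathcal{OCP}_{n}$. The only point requiring care is that Green's $\mathcal{L}$ is computed with multipliers drawn from different semigroups in the two results, so throughout I must keep track of where the connecting elements $\gamma_{1},\gamma_{2}$ live.

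For the forward implication I would argue as follows. Suppose $(\alpha,\beta)\in\mathcal{L}$ in $\mathcal{OCP}_{n}$, so that $\alpha=\gamma_{1}\beta$ and $\beta=\gamma_{2}\alpha$ for some $\gamma_{1},\gamma_{2}\in(\mathcal{OCP}_{n})^{1}$. Since $\mathcal{OCP}_{n}$ is a subsemigroup of $\mathcal{ORCP}_{n}$, these same multipliers lie in $(\mathcal{ORCP}_{n})^{1}$, whence $(\alpha,\beta)\in\mathcal{L}$ in $\mathcal{ORCP}_{n}$ as well. Corollary~\ref{l1} then gives immediately $\im\,\alpha=\im\,\beta$ and $\alpha^{\underline{Ker}}\beta$, so this direction costs nothing beyond the inclusion of semigroups.

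For the converse I would start from $\im\,\alpha=\im\,\beta$ and $\alpha^{\underline{Ker}}\beta$ and invoke the converse half of Corollary~\ref{l1} (equivalently Theorem~\ref{14}) to obtain multipliers $\gamma_{1},\gamma_{2}\in(\mathcal{ORCP}_{n})^{1}$ realising $\alpha=\gamma_{1}\beta$ and $\beta=\gamma_{2}\alpha$. The crux is to check that these $\gamma_{1},\gamma_{2}$ may be taken order-preserving, so that they in fact lie in $(\mathcal{OCP}_{n})^{1}$ and witness $(\alpha,\beta)\in\mathcal{L}$ inside $\mathcal{OCP}_{n}$. Here I would exploit order-preservation of $\alpha,\beta$: both $\textbf{Ker}\,\alpha=\{A_{1}<\cdots<A_{p}\}$ and $\textbf{Ker}\,\beta=\{B_{1}<\cdots<B_{p}\}$ are naturally ordered, and $\im\,\alpha=\im\,\beta$ forces $x_{i}=y_{i}$ for all $i$, so the isometry between the images is the identity, that is, a translation rather than a reflection. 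Consequently the construction in Theorem~\ref{14} falls into its translation case, where $\gamma_{1}=\left(\begin{array}{ccc}A_{1}' & \cdots & A_{s}' \\ \sigma_{1} & \cdots & \sigma_{s}\end{array}\right)$ sends the ordered refined blocks of $\alpha$ to the increasingly ordered admissible transversal $\{\sigma_{1}<\cdots<\sigma_{s}\}$ of $\beta$ (and symmetrically for $\gamma_{2}$), and such a map is order-preserving, hence lies in $\mathcal{OCP}_{n}$.

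The main obstacle is precisely this order-preservation check in the converse: I must rule out the reflection case of Theorem~\ref{14}, which would produce order-reversing multipliers and thus land outside $\mathcal{OCP}_{n}$. I expect this to be settled cleanly by the observation that $\im\,\alpha=\im\,\beta$, read as ordered sets since $\alpha,\beta$ are order-preserving, leaves only the translation option, so no reflection can occur; the order-preserving contraction between the maximum admissible refinements supplied by Lemma~\ref{y2} then guarantees that the transversal images $\sigma_{i}$ increase with $i$, making both $\gamma_{1}$ and $\gamma_{2}$ order-preserving and completing the deduction.
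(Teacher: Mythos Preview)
Your proposal is correct and follows the paper's intended route: the paper simply records this statement as a direct consequence of Corollary~\ref{l1} (the $\mathcal{ORCP}_{n}$ version) without supplying a separate argument, and your deduction via that corollary and the translation case of Theorem~\ref{14} is exactly what is implicit there. Your explicit attention to the fact that $\mathcal{L}$ is computed relative to different ambient semigroups, and your verification that the multipliers produced in the translation case are order-preserving (since the refined blocks $A_{i}'$ and the transversal $\sigma_{1}<\cdots<\sigma_{s}$ are both naturally ordered when $\alpha,\beta\in\mathcal{OCP}_{n}$), fills in precisely the detail the paper leaves to the reader.
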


Next, let  $\alpha, \beta \in\mathcal{OCP}_{n}$  be  as expressed in \eqref{2}. Then we have  the following:

\begin{corollary}[\cite{py}, Theorem 3.2] Let $\alpha, \beta\in \mathcal{OCP}_{n}$.
 Then $(\alpha, \beta)\in \mathcal{R}$ if and only if $\ker~\alpha=\ker~\beta$ and there exists a translation $x_{i}\mapsto y_{i}$  for all $1\leq i\leq p$.
 \end{corollary}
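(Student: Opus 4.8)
The plan is to read the statement off from Corollary\eqref{l2} by specializing to order preserving maps, the entire additional content being the exclusion of the reflection alternative. Since $\alpha,\beta\in\mathcal{OCP}_{n}$ are order preserving, once we know $\ker~\alpha=\ker~\beta$ the common partition $\textbf{Ker}~\alpha=\textbf{Ker}~\beta=\{A_{1}<A_{2}<\cdots<A_{p}\}$ forces both images to be listed increasingly, $x_{1}<x_{2}<\cdots<x_{p}$ and $y_{1}<y_{2}<\cdots<y_{p}$, where $A_{i}\alpha=x_{i}$ and $A_{i}\beta=y_{i}$. I would use this monotonicity as the lever that kills reflections.

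For the forward implication I would suppose $(\alpha,\beta)\in\mathcal{R}$ in $\mathcal{OCP}_{n}$, so $\alpha=\beta\gamma_{1}$ and $\beta=\alpha\gamma_{2}$ for some order preserving $\gamma_{1},\gamma_{2}\in(\mathcal{OCP}_{n})^{1}$. As $(\mathcal{OCP}_{n})^{1}\subseteq(\mathcal{ORCP}_{n})^{1}$, the same witnesses give $(\alpha,\beta)\in\mathcal{R}$ in $\mathcal{ORCP}_{n}$, so Corollary\eqref{l2} yields $\ker~\alpha=\ker~\beta$ together with a translation $x_{i}\mapsto y_{i}$ or a reflection $x_{i}\mapsto y_{p-i+1}$. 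To discard the reflection I look at the witness itself: from $\beta=\alpha\gamma_{2}$ and $a\in A_{i}$ we get $x_{i}\gamma_{2}=a\alpha\gamma_{2}=a\beta=y_{i}$, so $\gamma_{2}$ restricts to an order preserving bijection $\im~\alpha\to\im~\beta$ sending $x_{i}\mapsto y_{i}$; being also distance preserving (Theorem\eqref{15}), this restriction is an \emph{order preserving} isometry, so the pairing is the translation $x_{i}\mapsto y_{i}$. The reflection $x_{i}\mapsto y_{p-i+1}$, which pairs the smallest with the largest and so reverses order, cannot be realized by the order preserving $\gamma_{2}$ once $p\geq 2$, and for $p=1$ the two notions coincide trivially. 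This leaves exactly the translation $x_{i}\mapsto y_{i}$.

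For the converse I would assume $\ker~\alpha=\ker~\beta$ and that $x_{i}\mapsto y_{i}$ is a translation. Define $\gamma$ on $\im~\alpha$ by $x_{i}\gamma=y_{i}$ and $\gamma'$ on $\im~\beta$ by $y_{i}\gamma'=x_{i}$. Each sends an increasing tuple to an increasing tuple and has constant increment, hence each is an order preserving isometry and therefore lies in $\mathcal{OCP}_{n}$. Since $A_{i}$ is a common kernel class, for $a\in A_{i}$ we compute $a\alpha\gamma=x_{i}\gamma=y_{i}=a\beta$ and $a\beta\gamma'=y_{i}\gamma'=x_{i}=a\alpha$, whence $\beta=\alpha\gamma$ and $\alpha=\beta\gamma'$ with both witnesses inside $\mathcal{OCP}_{n}$; thus $(\alpha,\beta)\in\mathcal{R}$ in $\mathcal{OCP}_{n}$.

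The only genuine obstacle is bookkeeping the two semigroups correctly: Corollary\eqref{l2} is a statement about $\mathcal{R}$ in $\mathcal{ORCP}_{n}$, where order reversing witnesses are permitted and hence reflections genuinely occur, so one cannot merely quote it. The decisive extra step, which is the heart of the difference between $\mathcal{OCP}_{n}$ and $\mathcal{ORCP}_{n}$, is that the $\mathcal{R}$-witnesses in $\mathcal{OCP}_{n}$ are order preserving, and this collapses the translation-or-reflection dichotomy of Corollary\eqref{l2} to the translation alone; dually, the converse must check that the translation witnesses can be chosen order preserving, which the construction above confirms.
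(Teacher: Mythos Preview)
Your proposal is correct and follows essentially the same route the paper intends: the paper states this corollary without proof, as a direct specialization of Corollary~\ref{l2} (equivalently Theorem~\ref{15}) to $\mathcal{OCP}_{n}$, and your argument spells out precisely the missing step, namely that order preserving witnesses force the isometry $x_{i}\mapsto y_{i}$ to be a translation rather than a reflection. The only minor redundancy is the detour through $\mathcal{ORCP}_{n}$: once you observe that $\gamma_{2}$ (a contraction) sends $x_{i}\mapsto y_{i}$ and $\gamma_{1}$ does the reverse, the equalities $|x_{i}-x_{j}|=|y_{i}-y_{j}|$ follow directly as in the proof of Theorem~\ref{15}, and order preservation of $\gamma_{2}$ immediately upgrades this to $y_{i}-x_{i}=y_{j}-x_{j}$, so invoking Corollary~\ref{l2} first is not strictly needed.
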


\begin{corollary}[\cite{py} Theorem 3.3] Let $\alpha$ and $\beta\in \mathcal{OCP}_{n}$. Then
  $(\alpha,\beta)\in \mathcal{D}$ if and only if there exist translations $\vartheta_{1}$ from $\textbf{Ker}~\gamma_{1}$ to $\textbf{Ker}~\gamma_{2}$ and  $\vartheta_{2}$ from $\im~\alpha$ to $\im~\beta$.
\end{corollary}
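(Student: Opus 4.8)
The plan is to obtain this statement as the order-preserving specialization of Corollary~\eqref{l3}, exploiting that $\mathcal{OCP}_{n}$ is precisely the subsemigroup of $\mathcal{ORCP}_{n}$ whose elements are order-preserving. Corollary~\eqref{l3} already characterizes $\mathcal{D}$ for $\alpha,\beta\in\mathcal{ORCP}_{n}$ by the existence of maps $\vartheta_{1}\colon\textbf{Ker}~\gamma_{1}\to\textbf{Ker}~\gamma_{2}$ and $\vartheta_{2}\colon\im~\alpha\to\im~\beta$ that are each a translation \emph{or} a reflection; the only thing to do here is to show that, for order-preserving $\alpha$ and $\beta$, the reflection branch cannot occur, so that both $\vartheta_{1}$ and $\vartheta_{2}$ are forced to be translations, and conversely that two translations suffice while keeping all witnessing maps inside $\mathcal{OCP}_{n}$.

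For the forward implication I would take $(\alpha,\beta)\in\mathcal{D}$ computed inside $\mathcal{OCP}_{n}$, so that by definition there is an order-preserving $\eta$ with $\alpha\,\mathcal{L}\,\eta$ and $\eta\,\mathcal{R}\,\beta$ in $\mathcal{OCP}_{n}$. Since $\alpha,\eta,\beta$ are order-preserving we have $\textbf{Ker}~\alpha=\{A_{1}<A_{2}<\cdots<A_{p}\}$, $\im~\alpha=\{x_{1}<x_{2}<\cdots<x_{p}\}$ with $A_{i}\alpha=x_{i}$, and likewise for $\eta$ and $\beta$; consequently the admissible transversals of the maximum refinements inherit the natural order. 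Applying the two preceding corollaries (\cite{py}, Theorems~3.1 and~3.2): the $\mathcal{L}$-characterization gives $\im~\alpha=\im~\eta$ together with $\alpha^{\underline{Ker}}\eta$, which via Lemma~\eqref{y2} furnishes a \emph{translation} $\vartheta_{1}$ of the refined kernels; the $\mathcal{R}$-characterization gives $\ker~\eta=\ker~\beta$ together with a translation $x_{i}^{\eta}\mapsto y_{i}$, and combining this with $\im~\alpha=\im~\eta$ yields a \emph{translation} $\vartheta_{2}\colon\im~\alpha\to\im~\beta$. A reflection would send the least image to the greatest, contradicting order-preservation of the witnessing maps, so only the translation branch survives.

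For the converse I would assume translations $\vartheta_{1}$ and $\vartheta_{2}$ exist and reuse the construction from the proof of Theorem~\eqref{last} in its ``translation'' case, setting
$$\gamma=\left(\begin{array}{cccc} B_{1} & B_{2} & \ldots & B_{p} \\ x_{1} & x_{2} & \ldots & x_{p} \end{array}\right).$$
That proof shows $\gamma$ is a contraction with $\alpha\,\mathcal{L}\,\gamma$ and $\gamma\,\mathcal{R}\,\beta$; since the blocks $B_{1}<B_{2}<\cdots<B_{p}$ and the values $x_{1}<x_{2}<\cdots<x_{p}$ are both listed in increasing order, $\gamma$ is order-preserving and hence lies in $\mathcal{OCP}_{n}$, so the chain $\alpha\,\mathcal{L}\,\gamma\,\mathcal{R}\,\beta$ takes place inside $\mathcal{OCP}_{n}$ and gives $(\alpha,\beta)\in\mathcal{D}$. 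The main obstacle I anticipate is not computational but conceptual: one must verify that the $\mathcal{D}$-relation computed inside the smaller semigroup $\mathcal{OCP}_{n}$ really agrees with the characterization inherited from $\mathcal{ORCP}_{n}$, i.e.\ that both the intermediate element $\eta$ and the refining maps $\gamma_{1},\gamma_{2}$ can be kept order-preserving. This is exactly the step at which reflections are eliminated, and it must be checked rather than assumed, since a priori an order-reversing witness could relate two order-preserving elements inside the larger semigroup.
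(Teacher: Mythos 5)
Your proposal is correct and takes essentially the same route as the paper: the paper's own proof is a single line deducing the statement from Corollary~\eqref{l3} ``together with the fact that we only need the existence of translations,'' which is exactly the specialization you carry out. The only difference is one of detail---you actually verify that specialization (forward direction via the $\mathcal{OCP}_{n}$ characterizations of $\mathcal{L}$ and $\mathcal{R}$, converse via the explicit order-preserving witness $\gamma$), including the subsemigroup-witness subtlety that the paper's one-liner silently assumes.
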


\begin{proof} It follows directly from Corollary\eqref{l3} together with the fact that we only need the existence of translations  from $\textbf{Ker}~\gamma_{1}$ to $\textbf{Ker}~\gamma_{2}$ and from $\im~\alpha$ to $\im~\beta$.
\end{proof}

\begin{corollary}[\cite{py}, Theorem 3.5] Let $\alpha$ and $\beta\in \mathcal{OCP}_{n}$ be regular elements. Then  $(\alpha,\beta)\in \mathcal{D}$ if and only if  $x_{i}=y_{i}+e$   ($i=1,2,\ldots, p$) for some $e \in \mathbb{Z}$.
\end{corollary}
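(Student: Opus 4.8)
The plan is to reduce the statement to the preceding corollary (\cite{py}, Theorem 3.3), which characterises the $\mathcal{D}$-relation on $\mathcal{OCP}_{n}$ by the existence of a translation $\vartheta_{1}$ from $\textbf{Ker}~\gamma_{1}$ to $\textbf{Ker}~\gamma_{2}$ together with a translation $\vartheta_{2}$ from $\im~\alpha$ to $\im~\beta$, where $\textbf{Ker}~\gamma_{1}$ and $\textbf{Ker}~\gamma_{2}$ are the maximum admissible refinements of $\textbf{Ker}~\alpha$ and $\textbf{Ker}~\beta$. The whole point of the regularity hypothesis is Remark\eqref{re3}: a regular $\alpha$ already possesses an admissible transversal, so $\textbf{Ker}~\alpha$ is its own maximum admissible refinement, and likewise for $\beta$. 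Hence in the present situation one may take $\textbf{Ker}~\gamma_{1}=\textbf{Ker}~\alpha$ and $\textbf{Ker}~\gamma_{2}=\textbf{Ker}~\beta$, so the two data demanded by that corollary collapse to a translation between the admissible transversals of $\textbf{Ker}~\alpha$ and $\textbf{Ker}~\beta$ and a translation $x_{i}\mapsto y_{i}$ of the images.

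For the forward direction I would argue that if $(\alpha,\beta)\in\mathcal{D}$ then the second datum, the translation $\vartheta_{2}\colon\im~\alpha\to\im~\beta$, immediately yields $y_{i}=x_{i}+c$ for a fixed integer $c$, that is $x_{i}=y_{i}+e$ with $e=-c$, which is exactly the asserted condition. No use of the kernel translation is needed in this direction.

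For the converse I would start from $x_{i}=y_{i}+e$ and recover both translations. Since $\alpha$ is regular, Corollary\eqref{co1} fixes its shape: there is a common offset $d$ with $\max A_{1}=x_{1}+d$, $\min A_{p}=x_{p}+d$ and $A_{i}=\{x_{i}+d\}$ for $2\leq i\leq p-1$, so the good transversal is $T_{\alpha}=\im~\alpha+d=\{x_{i}+d: 1\leq i\leq p\}$; similarly $\beta$ has offset $d'$ and good transversal $T_{\beta}=\{y_{i}+d'\}$. The key computation is then that $x_{i}+d=(y_{i}+d')+(e+d-d')$ for every $i$, whence $T_{\alpha}=T_{\beta}+(e+d-d')$ is a genuine translate of $T_{\beta}$. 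This supplies the translation $\vartheta_{1}$ between the two admissible transversals, while $x_{i}=y_{i}+e$ is precisely the image translation $\vartheta_{2}$. Feeding both into the preceding corollary (\cite{py}, Theorem 3.3), equivalently into Theorem\eqref{last} with the reflection option excluded by order preservation, gives $(\alpha,\beta)\in\mathcal{D}$.

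The main obstacle is bookkeeping rather than substance: the small-height cases $p\leq 2$, where Corollary\eqref{co1} is stated only for $|\im~\alpha|\geq 3$, must be checked separately by direct inspection, and one must verify carefully that the three offsets $d$, $d'$ and the image constant $e$ amalgamate into the single translation constant $e+d-d'$ as above. Once that is settled the argument is routine, and the result can also be read off as the order-preserving specialisation of Corollary\eqref{l3} in which reflections do not occur.
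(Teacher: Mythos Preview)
Your proposal is correct and follows the route the paper intends: the paper states this corollary without proof, leaving it as an immediate consequence of the preceding corollary on $\mathcal{D}$ in $\mathcal{OCP}_{n}$ together with Remark\eqref{re3} (so that $\textbf{Ker}~\gamma_{i}=\textbf{Ker}~\alpha,\textbf{Ker}~\beta$), which is exactly the reduction you carry out. Your converse is more explicit than anything the paper writes down, invoking Corollary\eqref{co1} to exhibit the kernel translation $T_{\alpha}=T_{\beta}+(e+d-d')$ directly; this extra detail is a welcome clarification rather than a departure, and your caveat about the small cases $p\leq 2$ is apt since the paper simply does not address them.
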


\noindent {\bf Acknowledgements.} The second named author would like to thank Bayero University and TET Fund for financial support. He would also like to thank The Petroleum Institute, Khalifa University of Science and Technology for hospitality during his 3-month research visit to the institution.


\begin{thebibliography}{99}
\markboth{Reference}{}

\bibitem{k} Al-Kharousi, F.  Kehinde, R.  and Umar, A.  On the semigroup of partial isometries of
a finite chain. \emph{Comm. Algebra} \textbf{44} (2016), 639--647.
\bibitem{adu} Adeshola, A. D. and Umar, A. Combinatorial results for certain semigroups of order-preserving full contraction mappings of a finite chain. \emph{J. Comb. Maths. and Comb. Computing}. To appear.
\bibitem{p} Catarino, P. M.  and  Higgins, P. M. The monoid of orientation-preserving mappings on a chain. \emph{Semigroup Forum } \textbf{58 }(1999), no. 2, 190--206.
\bibitem{gr}  Green, J. A. On the structure of semigroups, \emph{Ann. of Math.} (2) \textbf{54 }(1951), 163--172.
\bibitem{Maz}  Ganyushkin, O.  and Mazorchuk, V.   C\emph{lassical Finite Transformation Semigroups}. Springer$-$Verlag: London Limited (2009).
\bibitem{ph} Higgins, P. M.  \emph{Techniques of semigroup theory.} Oxford university Press (1992).
\bibitem{howi}  Howie, J. M. \emph{Fundamental of semigroup theory}. London Mathematical Society, New series 12. \emph{The Clarendon Press, Oxford University Press}, 1995.
\bibitem{kd} Magill, K. D. J.  and  Subbiah, S. Green's relations for regular elements of semigroups of endomorphisms. \emph{Canad. J. Math.} \textbf{26} (1974), 1484--1497.
\bibitem{gg} Magill, K. D. Jr.  and Subbiah, S.  Green's relations for regular elements of sandwich semigroups. I. General results. \emph{Proc. London Math. Soc.} (3) \textbf{31} (1975), no. 2, 194--210.
\bibitem{ggg} Magill, K. D. Jr.  and Subbiah, S.  Green's relations for regular elements of sandwich semigroups. II. Semigroups
of continuous functions, \emph{J. Austral. Math. Soc. Ser. A }\textbf{25} (1978), no. 1, 45--65.
\bibitem{su} Pei, H. S., Sun, L.  and Zhai, H. C.  Green's relations for the variants of transformation
semigroups preserving an equivalence relation.\emph{ Comm. Algebra} \textbf{35} (2007), no. 6, 1971--1986.
\bibitem{zou} Pei, H. S.  and Zou, D. Y.  Green's equivalences on semigroups of transformations preserving order and an equivalence.\emph{ Semigroup Forum} \textbf{71} (2005), no. 2, 241--251.
\bibitem{pe} Pei, H. S.  Regularity and Green's relations for semigroups of transformations that preserve an equivalence, \emph{Comm. Algebra} \textbf{33} (2005), no. 1, 109--118.
\bibitem{sp} Sun, L.  and Pei, H. S.  Green's relations on semigroups of transformations preserving two
equivalence relations. \emph{ J. Math. Res. Exposition} \textbf{29} (2009), no. 3, 415--422.
\bibitem{sz} Sun, L.,  Pei, H. S., and Cheng, Z. X.  Regularity and Green's relations for semigroups of
transformations preserving orientation and an equivalence. \emph{Semigroup Forum} \textbf{74} (2007),
no. 3, 473--486.
\bibitem{af} Umar, A.  and Al-Kharousi, F. Studies in semigroup of contraction mappings of a finite chain. The Research Council of Oman Research grant proporsal No. ORG/CBS/12/007, 6th March 2012.
\bibitem{ua} Umar, A.  On the semigroups of order-decreasing finite full transformations. \emph{Proc. Roy.
Soc. Edinburgh Sect. A} \textbf{120} (1992), no. 1-2, 129--142.
\bibitem{py} Zhao, P.  and Yang, M.  Regularity and Green's relations on semigroups of transformation preserving
order and compression.\emph{ Bull. Korean Math. Soc.} \textbf{49} (2012), No. 5,  1015--1025.


\end{thebibliography}
\end{document}